\DeclareRobustCommand{\alphabeta}[1]{\foreignlanguage{greek}{\alph{#1}}}
\definecolor{tolblue}{RGB}{68,119,170}
\definecolor{tolgreen}{RGB}{34,136,51}
\definecolor{tolred}{RGB}{238,102,119}
\newcommand{\cref}[1]{\zcref[S]{#1}}
\declaretheorem[
  title=Theorem,
  refname={Theorem, Theorems},
  Refname={Theorem, Theorems},
  within=section
]{theorem}
\declaretheorem[
  title=Corollary,
  refname={Corollary, Corollaries},
  Refname={Corollary, Corollaries},
  sibling=theorem
]{corollary}
\declaretheorem[
  title=Lemma,
  refname={Lemma, Lemmas},
  Refname={Lemma, Lemmas},
  sibling=theorem
]{lemma}
\theoremstyle{definition}
\declaretheorem[
  title=Open Problem,
  refname={Open Problem, Open Problems},
  Refname={Open Problem, Open Problems},
  sibling=theorem
]{openproblem}
\setlist[enumerate, 1]{wide, leftmargin=*, labelindent=0pt}
\DeclareMathOperator{\dom}{dom}
\DeclareMathOperator{\rank}{rank}
\DeclareMathOperator{\im}{im}
\DeclareMathOperator{\id}{id}
\DeclareMathOperator{\Stab}{Stab}
\DeclareMathOperator{\supp}{supp}
\renewcommand{\to}{\longrightarrow}
\newcommand{\J}{\mathscr{J}}
\newcommand{\N}{\mathbb{N}}
\newcommand{\ve}{\zeta}
\newcommand{\vep}{\eta}
\newcommand{\set}[2]{\{#1\mid#2\}}
\newcommand{\presn}[2]{\langle#1\mid#2\rangle}
\newcommand{\defn}[1]{\textit{\textbf{#1}}}
\renewcommand{\P}{\mathcal{P}}
\newcommand{\GAP}{\textsc{GAP}~\cite{GAP4}\xspace}
\newcommand{\Semigroups}{\textsc{Semigroups}~\cite{Mitchell2026aa}\xspace}
\newcommand{\T}{\mathcal{T}}
\newcommand{\PT}{\mathcal{PT}}
\setlist{itemsep=0em}
\title{Short presentations for transformation monoids}
\author{Thomas Aird}
\address{The University of Manchester,
  Alan Turing Building,
  Oxford Rd,
  Manchester,
  M13 9PL,
  UK; and
Heilbronn Institute for Mathematical Research, Bristol, UK.}
\email{thomas.aird@manchester.ac.uk}
\author{James D. Mitchell}
\address{Mathematical Institute, North Haugh, St Andrews, Fife, Scotland, KY16
9SS}
\email{jdm3@st-andrews.ac.uk}
\author{Murray T. Whyte}
\address{Mathematical Institute, North Haugh, St Andrews, Fife, Scotland, KY16
9SS}
\email{mw231@st-andrews.ac.uk}
\subjclass{20M20, 20M05}
\keywords{Semigroups, monoids, presentations.}
\begin{document}

\begin{abstract}
  By the theorems of Cayley and Vagner-Preston, the full transformation
  monoids and the symmetric inverse monoids play analogous roles in the theory
  of monoids and inverse monoids, as the symmetric groups do in the theory of
  groups. Every presentation for the finite full transformation monoids $T_n$,
  symmetric inverse monoids $I_n$, and partial transformation monoids $PT_n$
  contains a monoid presentation for the symmetric group. In this paper we show
  that the number of relations required, in addition to those for the symmetric
  group, for each of these monoids is at least $4$, $3$, and $8$, respectively.
  We also give presentations for: $T_n$ with $4$ additional relations when
  $n\geq 7$; for $I_n$ with $3$ additional relations for all $n \geq 3$; and
  for $PT_n$ with $8$ additional relations for all $n\geq 7$. The presentations
  for $T_n$ and $I_n$ answer open problems in the literature.
\end{abstract}

\maketitle
\tableofcontents

\section{Introduction}\label{section-intro}

The idea of defining semigroups, monoids, and groups by presentations is almost
as old as the study of these objects themselves and the accompanying literature
is vast. Prominent early examples include~\cites{Aizenstat1958aa,
Carmichael1937aa, Moore1897aa, Popova1961aa, Sutov1960aa}; and recent
examples, among many others, include~\cites{Akita2024aa, Chalk2024aa,
  Chinyere2023aa, Cihan2023aa, Clark2023aa, Curien2023aa, Dimitrova2023aa,
  Elias2023aa, Kobayashi2023aa, Namanya2023aa, Quick2024aa, Reese2023aa,
Yayi2026aa, Wang2023aa}. The question of optimising, in several
different senses,
presentations for particular classes of groups, monoids, or semigroups, has
also been extensively considered in the literature. These notions of optimising
presentations usually relate to minimising something: the difference between
the number of generators and relations~\cite{Abdolzadeh2013aa, Campbell2004aa,
Conder2006aa, Stoytchev2022aa}; number of
generators~\cite{Gomes1987aa,Gray2013aa}; number of
relations~\cite{Babai1997aa}; or some notion of the length of a
presentation~\cite{Babai1984aa, Bray2011aa, Guralnick2008aa, Lubotzky2010aa}.
In this paper we are primarily concerned with the second of these notions: the
number of relations. We will also make some comments about the lengths of
various presentations $\P$, where the \defn{length} is defined to be the sum of
the number of generators and the lengths of all of the words occurring in
relations (or relators); denoted by $|\P|$.

In this paper, we consider presentations for three classical finite
transformation monoids: the symmetric inverse monoids $I_n$; the full
transformation monoids $T_n$; and the partial transformation monoids $PT_n$,
where $n\in \N$. The value $n$ is referred to as the \defn{degree} of these
monoids. The group of units of all of these monoids is the finite symmetric
group $S_n$, where $n$ is the degree of the monoid; we also refer to $n$ as the
degree of $S_n$. It is not difficult to show that any presentation $\P$ for any
of these monoids must contain a monoid presentation for the corresponding
symmetric group. The symmetric groups, the full transformation monoids, and the
symmetric inverse monoids play analogous roles in the classes of groups,
monoids, and inverse monoids respectively. By Cayley's Theorem~\cite[Theorem
1.1.2]{Howie1995aa} every monoid is isomorphic to a submonoid of some full
transformation monoid; and by the Vagner-Preston Representation
Theorem~\cite[Theorem 5.1.7]{Howie1995aa}, every inverse monoid is isomorphic to
an inverse submonoid of some symmetric inverse monoid. A definition of these
monoids and their elements can be found in \cref{section-prelims}.

Presentations for the finite symmetric groups also have a long history, perhaps
starting with Moore~\cite{Moore1897aa} and Burnside and
Miller~\cite{Burnside2012aa} in 1897, through
Carmichael~\cite{Carmichael1937aa} in 1937, Coxeter and
Moser~\cite{Coxeter1980aa} in 1958 into the present day in~\cite{Bray2011aa,
Guralnick2008aa, Lubotzky2010aa}. Because of their particular relevance here,
we will explicitly state a number of these presentations. Moore's presentation
for the symmetric group $S_n$ of degree $n\geq 4$ from~\cite{Moore1897aa} is:
\begin{equation}\label{eq-moore}
  \presn{a, b}{a^{2},\quad b ^ {n},\quad  {(ba)} ^ {n - 1},\quad
    {(ab^{n - 1} a b)}^{3},\quad
  {(a b^{n - j} a b^j)}^{2}\quad (2 \leq j < n - 1)},
\end{equation}
where $a$ represents the transposition $(1, 2)$, and $b$ represents the
$n$-cycle $(1, 2, \ldots, n)$.
The number of relations in Moore's presentation is $n + 1$; and the
length of the presentation is $O(n^2)$.
Carmichael's presentation for $S_n$ with $n\geq 4$
from~\cite[p169]{Carmichael1937aa} in 1937 is:
\begin{equation}\label{eq-carmichael}
  \begin{aligned}
    \langle
    a_2, \ldots, a_{n}
    & \mid
    a_2 ^ {2}, \ldots, a_n^2, {(a_2a_3)}^3, \ldots,
    {(a_{n - 1}a_n)}^3, {(a_{n}a_{2})}^3, \\
    &\ \  {(a_{i}a_{i+1}a_{i}a_{j})}^2\quad (2\leq i, j\leq n,\
    j\not\in \{i, i + 1\},\ a_{n + 1} = a_2)\rangle,
  \end{aligned}
\end{equation}
where $a_i$ represents the transposition $(1, i)$ for every $i$;
see also~\cite[Item 9.5.2]{Ganyushkin2009aa}.
The number
of relations in, and the length of, Carmichael's presentation are both $O(n^2)$.
Both of these presentations are monoid presentations, since they do not contain
inverses of any generators. As suggested in~\cite{Bray2011aa}, Moore's
presentation from~\eqref{eq-moore} can be modified to have $O(n)$ relations
and length. For example, one such modification of Moore's presentation
is:
\begin{equation}\label{eq-moore-reduced}
  \presn{a, b, c_2, \ldots, c_{n - 2}}{a^{2},\quad b ^ {n},\quad  {(ba)} ^ {n -
    1},\quad {(ab^{n - 1} a b)}^{3},\quad c_2 b ^ {-2}, \quad c_{j
    +1}^{-1}c_{j}b, \quad {(a
  c_{n-j} a c_j)}^{2}\quad (2 \leq j < n - 1)}.
\end{equation}
Although this presentation is not a monoid presentation, it can be converted
into one by replacing, for example, the relator $c_2b^{-2}$ by the relation
$c_2 = b^2$. It is shown in~\cite[Theorem 1.1]{Bray2011aa} that $S_n$
has a presentation with $O(\log n)$ relators and length $O(n)$\footnote{In
  Theorem 1.1 from~\cite{Bray2011aa} it is written that the length of the
  presentation is $O(\log ^2 n)$ but this uses a different definition than
that used here, of the length of a presentation.}. The presentation
in~\cite{Bray2011aa} is not a monoid presentation, but by replacing the
inverse of a generator $a$ wherever it occurs by $a^{m - 1}$ where $m$ is the
order of $a$, we can obtain a monoid presentation for $S_n$ with the same
number of relations, and length $O(n)$. Further presentations of $S_n$ and
an analysis of their numbers of relators and length are given
in~\cite{Lubotzky2010aa}.

Given that a monoid presentation for the symmetric group must be contained in
every presentation for a full transformation monoid, a symmetric inverse
monoid, or a partial transformation monoid, and that presentations for the
symmetric group have been well-studied, we will not attempt to further optimise
these presentations. Instead we focus on minimising the number of additional
relations, i.e.\ those not defining the symmetric group. We refer to these
relations collectively as \defn{non-$S_n$ relations}. In the main theorems of
this paper, we will not explicitly include specific generators or relations
defining the symmetric groups. As a consequence, when the non-$S_n$ relations
in a presentation should contain words that define elements of the symmetric
group, we instead write the corresponding element of the symmetric group in
disjoint cycle notation.

The following presentation for $I_n$ has $5$ non-$S_n$ relations; see
\cref{section-prelims} for any undefined notation.

\begin{theorem}[Sutov~\cite{Sutov1960aa} 1960,
    Popova~\cite{Popova1961aa} 1961, East~\cite{East2007aa}
  2007]\label{theorem-In-five-rels}
  Suppose that $n\in \N$, that $n \geq 4$, and that $\presn{A}{R}$ is any
  monoid presentation for the symmetric group $S_n$ of degree $n$. Then the
  symmetric inverse monoid $I_n$ is defined by the presentation with generators
  $A\cup \{\vep\}$ where $\vep\not\in A$, and relations $R$ together with:
  \begin{multicols}{2}
    \begin{enumerate}[label=\rm (I\arabic*), ref=\rm I\arabic*]
      \item\label{rel-I1}
        $\vep^2 = \vep$;
      \item\label{rel-I2}
        $(2, 3) \vep = \vep (2, 3)$;
      \item\label{rel-I3}
        $(2, 3, \ldots, n) \vep = \vep(2, 3, \ldots, n)$;\columnbreak%
      \item\label{rel-I4}
        $\vep  (1, 2)   \vep  (1, 2)  =  \vep(1, 2)\vep$;
      \item\label{rel-I5}
        $(1, 2)   \vep  (1, 2)   \vep =  \vep(1, 2)\vep$;
      \item[\vspace{\fill}]
    \end{enumerate}
  \end{multicols}
  \noindent (where $(2, 3)$, $(2, 3, \ldots, n)$ and $(1, 2)$ denote any words
    in $A ^*$ representing these permutations, and $\vep$ represents the rank $n
  - 1$ idempotent with $1\notin \dom(\vep)$).
\end{theorem}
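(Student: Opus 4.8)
The plan is to use the standard strategy for verifying a monoid presentation. Write $M$ for the monoid defined by the presentation, check that the defining relations hold in $I_n$ so as to obtain a surjective homomorphism $\phi\colon M \to I_n$, and then prove that $|M| \le |I_n|$, which forces $\phi$ to be an isomorphism. The first step is routine. Interpreting $\vep$ as $\id_{\{2,\dots,n\}}$, relation~(\ref{rel-I1}) says that $\vep$ is idempotent; relations~(\ref{rel-I2}) and~(\ref{rel-I3}) record that $\vep$ commutes with the permutations $(2\ 3)$ and $(2\ 3\ \cdots\ n)$, which is immediate because these fix the set $\{2,\dots,n\}$ setwise, and together they generate $\Stab(1)\cong S_{n-1}$; and a direct computation shows that both sides of~(\ref{rel-I4}) and of~(\ref{rel-I5}) evaluate to the corank-$2$ partial identity $\id_{\{3,\dots,n\}}$.

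The substance of the argument is the bound $|M| \le |I_n|$, which I would obtain by constructing a set of normal forms. First I would define, inside $M$, a descending chain of idempotents $\vep_0 = 1$, $\vep_1 = \vep$, and $\vep_k$ for $2 \le k \le n$, each a word representing the corank-$k$ partial identity $\id_{\{k+1,\dots,n\}}$, built recursively from $\vep$ and suitable conjugates by symmetric-group words, with $\vep_2 = \vep(1\ 2)\vep$ furnished by~(\ref{rel-I4})/(\ref{rel-I5}). Using~(\ref{rel-I2})--(\ref{rel-I5}) together with the relations $R$, I would then establish the key structural facts: the $\vep_k$ are mutually commuting idempotents forming a semilattice; each $\vep_k$ commutes in $M$ with every word representing an element of $\Stab(\{1,\dots,k\})$; and a product $\vep_k\,\sigma\,\vep$, for any symmetric-group word $\sigma$, collapses to $g\,\vep_{k'}\,h$ with $k' \in \{k,k+1\}$ according to whether or not $\sigma$ preserves the relevant point.

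With these facts in hand the goal becomes to show that every element of $M$ can be written as $g\,\vep_k\,h$, where $0 \le k \le n$ and the pair $(g,h)$ of symmetric-group words ranges over a fixed transversal for the equality $g\,\vep_k\,h = g'\,\vep_k\,h'$ in $I_n$; by an orbit--stabiliser count this transversal has size $\binom{n}{n-k}^{2}(n-k)!$ for each $k$. I would prove reachability of normal forms by induction on word length: it suffices to check that right-multiplying a normal form $g\,\vep_k\,h$ by a single generator can be rewritten back to normal form, the case of an $A$-generator being handled by the symmetric-group relations and the commutation facts, and the case of $\vep$ by the collapsing identities above. Summing the transversal sizes then gives that the number of normal forms is exactly $\sum_{r=0}^{n}\binom{n}{r}^{2} r! = |I_n|$, and since $\phi$ is surjective this completes the proof.

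I expect the main obstacle to be the completeness of this rewriting, namely showing that an arbitrary alternating product $g_0\,\vep\,g_1\,\vep\,\cdots\,\vep\,g_m$ of symmetric-group words and copies of $\vep$ always reduces to a single canonical idempotent $\vep_k$ flanked by two permutations, with $k$ equal to the true corank. This idempotent-merging is precisely where relations~(\ref{rel-I4}) and~(\ref{rel-I5}) are indispensable, and the delicate point is the bookkeeping ensuring that the corank produced by the rewriting never undercounts, so that the set of normal forms is genuinely no larger than $I_n$.
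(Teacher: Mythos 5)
The paper does not actually prove this statement: \cref{theorem-In-five-rels} is quoted as a known result of Sutov, Popova, and East, and is then used as a black box (in \cref{subsection-sym-inv-upper-bound} the paper derives \ref{rel-I1}, \ref{rel-I3}, \ref{rel-I4}, and \ref{rel-I5} from its new presentation $\mathcal{I}$ and invokes this theorem to conclude). So there is no in-paper proof to compare against; your proposal has to be measured against the proofs in the cited literature, and it is in fact the same strategy those sources use: verify the relations in $I_n$ to obtain a surjection $\phi\colon M \to I_n$, then bound $|M| \le |I_n|$ by reducing every word to a normal form $g\,\vep_k\,h$ and counting $\sum_{r=0}^{n}\binom{n}{r}^{2}r! = |I_n|$. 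Your structural lemmas (the semilattice of idempotents $\vep_k$, commutation of $\vep_k$ with words representing $\Stab(\{1,\ldots,k\})$, and the collapsing rule for $\vep_k\,\sigma\,\vep$) are exactly the right ingredients, and the transversal count is correct. One point genuinely in your favour: relation \ref{rel-I3} as printed is vacuous (both sides are the identical word), and you silently read it as the commutation $(2\ 3\ \cdots\ n)\vep = \vep(2\ 3\ \cdots\ n)$, which is the intended relation (compare its restatement in \cref{theorem-PTn-east}). This correction is not cosmetic: by the paper's own \cref{lemma-generating-stab-In}, the leading permutations of the rank $n-1$ relations in any presentation defining $I_n$ must generate $\Stab(1)\cong S_{n-1}$, and with the literal reading only $\id$ and $(2\ 3)$ are available, so the theorem would be false as printed for $n \geq 4$.

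The caveat is that what you have written is an outline whose core is deferred rather than executed. The claim that every alternating word $g_0\,\vep\,g_1\,\vep\cdots\vep\,g_m$ reduces, using only \ref{rel-I1}--\ref{rel-I5} and $R$, to a transversal form $g\,\vep_k\,h$ of the correct corank is precisely where all the labour in the published proofs lies; you correctly identify it as the main obstacle and correctly identify \ref{rel-I4}/\ref{rel-I5} as the relations doing the idempotent-merging, but you do not carry out the induction. In particular, the step where an arbitrary symmetric-group word is pushed past $\vep_k$ requires knowing that every $I_n$-equality of the form $g\,\vep_k\,h = g'\,\vep_k\,h'$ (governed by $g^{-1}g'$ stabilising $\{1,\ldots,k\}$ setwise and by absorption of permutations of $\{1,\ldots,k\}$ into $\vep_k$) is derivable from the relations, and establishing that derivability is the substance of the theorem. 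So: right strategy, same route as the literature, correct counting, but incomplete as a proof until the rewriting lemma is proved.
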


Note that the relations~\ref{rel-I1},~\ref{rel-I2}, and~\ref{rel-I3} represent
elements in $I_n$ with rank $n - 1$, and~\ref{rel-I4} and~\ref{rel-I5}
represent elements with rank $n - 2$.

The non-$S_n$ relations in the presentation from \cref{theorem-In-five-rels}
have length
\[15 + 2 |(2, 3)| + 2 |(2,3, \ldots, n)| + 6 |(1, 2)|,\]
where $|\sigma|$ denotes the length of the shortest word in $A^*$ representing
$\sigma\in S_n$. If $A$ is the generating set consisting of $(1, 2)$ and $(1,
2, \ldots, n)$, then it is straightforward to check that
\[
  |(2, 3)| = n + 1,\quad |(1, 2)| = 1,\quad |(2, 3, \ldots, n)| = 2.
\]
It follows that there is a presentation for $I_n$ with number, and length, of
relations $O(n)$ (i.e.\ using the presentation
  in~\eqref{eq-moore-reduced} for the
symmetric group in \cref{theorem-In-five-rels}).
Similarly, since the presentation with $O(\log n)$ relations and length $O(n)$
defining $S_n$ from~\cite{Bray2011aa} has $(1, 2)$ and $(1, 2, \ldots, n)$ among
its generators, it follows that $I_n$ has a presentation with $O(\log n)$
relations and length $O(n)$.

It is also possible to prove that none of the non-$S_n$ relations in
\cref{theorem-In-five-rels} is unnecessary in the following sense. We say that a
relation $(u, v)$ in a presentation $\presn{A}{R}$ is \defn{redundant} if the
monoid defined by the presentation $\presn{A}{R\setminus \{(u, v)\}}$ is
isomorphic to the monoid defined by $\presn{A}{R}$. A relation that is not
redundant is \defn{irredundant}. If every relation belonging to
$R'\subseteq R$ is
irredundant, we refer to $R'$ as being \defn{irredundant}.
It can be shown that the
relations~\ref{rel-I1},~\ref{rel-I2},~\ref{rel-I3},~\ref{rel-I4},
and~\ref{rel-I5} are irredundant. However, since this is not
the purpose of this paper, we omit the details.

There is a large presentation for $T_n$ from the 1970s, due to Iwahori and
Iwahori~\cite{iwahori}, where the number of non-$S_n$ relations is
$O(n^4)$. Given the large number of relations in this presentation, we will not
reproduce it here. Pre-dating Iwahori and Iwahori's presentation by nearly two
decades, is the considerably smaller presentation for $T_n$ due to Aizenstat,
with only seven non-$S_n$ relations.

\begin{theorem}[Aizenstat, 1958~\cite{Aizenstat1958aa}]\label{theorem-aizenstat}
  Suppose that $n\in \N$, that $n \geq 4$, and that $\presn{A}{R}$ is
  any monoid presentation for the symmetric group $S_n$ of degree $n$. Then the
  full transformation monoid $T_n$ is defined by the presentation with
  generators $A\cup \{\ve\}$ with $\ve\notin A$, and relations $R$
  together with:
  \setlength{\columnsep}{-2cm}
  \begin{multicols}{2}
    \begin{enumerate}[leftmargin=1em, label=\rm (T\arabic*), ref=\rm T\arabic*]
      \item\label{rel-T1}
        $\ve  (1, 3) \ve  (1, 3) = \ve$;
      \item\label{rel-T2}
        $(1, 2)  \ve = \ve$;
      \item\label{rel-T3}
        $(3, 4) \ve = \ve  (3, 4)$;
      \item\label{rel-T4}
        $(3, 4, \ldots, n)  \ve = \ve  (3, 4, \ldots, n)$;\columnbreak%
      \item\label{rel-T6}
        $\ve (2, 3) \ve (2, 3) = \ve (2, 3) \ve$;
      \item\label{rel-T5}
        $(2, 3) \ve (2, 3) \ve = \ve (2, 3) \ve$;
      \item\label{rel-T7}
        $(1, n)(2, 3) \ve  (1, n)(2, 3) \ve
        = \ve (1, n)(2, 3)  \ve  (1, n)(2, 3)$;
    \end{enumerate}
  \end{multicols}
  \noindent (where the permutations written in disjoint cycle notation denote
    any words in $A^*$ representing these permutations, and $\ve$ represents the
  rank $n - 1$ idempotent mapping $2$ to $1$).
\end{theorem}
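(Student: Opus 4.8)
The plan is to prove that the presentation $\langle A\cup\{\ve\}\mid R,\ \text{\ref{rel-T1}--\ref{rel-T7}}\rangle$ defines $T_n$ by showing the defining monoid, call it $M$, is isomorphic to $T_n$. Since every relation holds in $T_n$ (one checks directly that $\ve$ as the rank $n-1$ idempotent $2\mapsto 1$ together with the group relations satisfies \ref{rel-T1}--\ref{rel-T7}), there is a surjective homomorphism $\phi\colon M\to T_n$. It therefore suffices to show $|M|\le |T_n|$, i.e.\ that every element of $M$ can be rewritten into one of a set of normal-form words of cardinality at most $|T_n| = n^n$. Because $R$ presents $S_n$, the group of units $S_n\le M$ is already under control, so the whole argument reduces to counting the cosets produced by inserting copies of $\ve$ among group elements.

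First I would set up the standard normal form for $T_n$: every transformation of rank $r$ factors as (a choice of image set) $\cdot$ (a choice of kernel/partition) $\cdot$ (a permutation of the image), which is the usual $\mathcal{R}$-$\mathcal{L}$-$\mathcal{H}$ decomposition. Concretely I would fix, for each rank, a transversal of idempotents built from conjugates of $\ve$ and express an arbitrary element of $M$ as $g_1\ve g_2\ve\cdots\ve g_k$ with $g_i\in S_n$. The core combinatorial step is a rewriting system: using \ref{rel-T2}, \ref{rel-T3}, \ref{rel-T4} one moves permutations that stabilise the relevant points past $\ve$ (these are the relations saying $\ve$ absorbs $(1\ 2)$ and commutes with the stabiliser-type generators $(3\ 4)$ and $(3\ 4\cdots n)$ of $\{1,2\}$-fixing permutations), so that any product collapses onto a canonical "staircase" of idempotents. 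The relations \ref{rel-T6}, \ref{rel-T5} control what happens when $\ve$ meets $(2\ 3)$, letting one reduce a product of two $\ve$'s separated by $(2\ 3)$; and \ref{rel-T1} records that $\ve(1\ 3)\ve(1\ 3)=\ve$, which fixes the behaviour of the idempotent when its moved point is reintroduced. The role of \ref{rel-T7}, the long commuting relation involving $(1\ n)(2\ 3)$, is to guarantee that idempotents with \emph{disjoint} supports commute, which is exactly what is needed to show that the kernel-classes can be recombined independently and hence that the normal forms are not overcounted.

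I would then argue by induction on the number of occurrences of $\ve$ (equivalently, on corank) that every word reduces to a canonical product of the chosen idempotent transversal times a coset representative of the image-stabiliser in $S_n$, and bound the number of such canonical words by the number of pairs (partition of $\{1,\dots,n\}$ into $r$ blocks, ordered image of size $r$), summed over $r$. This count is precisely $|T_n|=\sum_{r}\binom{n}{r}\genfrac\{\}{0pt}{}{n}{r} r! = n^n$ (Stirling numbers counting surjections), so $|M|\le n^n=|T_n|$, and together with surjectivity of $\phi$ this forces $\phi$ to be an isomorphism.

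The main obstacle will be the rewriting/confluence bookkeeping that shows \emph{every} word $g_0\ve g_1\ve\cdots$ actually reaches a unique canonical staircase form using only \ref{rel-T1}--\ref{rel-T7}; in particular, showing that an arbitrary permutation inserted between two idempotents can be decomposed, via the $S_n$-relations $R$, into pieces each of which is handled by one of the seven relations (stabiliser-commuting, absorption, or the two-idempotent reduction). The delicate point is that $\ve$ is a \emph{single} idempotent of corank one, so all higher-corank idempotents must be synthesised as conjugates $w^{-1}\ve w$ and products thereof, and one must verify that the commuting relation \ref{rel-T7} really does yield commutativity for \emph{all} disjointly-supported conjugates and not merely the one displayed — this typically requires conjugating \ref{rel-T7} by suitable elements of $S_n$ and invoking the group relations $R$ to transport the identity across the whole conjugacy class. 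I expect this transport argument, rather than the final cardinality count, to be where the real work lies.
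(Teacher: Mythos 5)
The paper does not prove this theorem at all: it is quoted, with attribution, as Aizenstat's 1958 result, so there is no in-paper proof to compare your attempt against. The comparison must therefore be with the classical argument in the literature, and your strategy is exactly that classical route: check the relations hold in $T_n$ to obtain a surjection $\phi\colon M\to T_n$ from the monoid $M$ defined by the presentation, then bound $|M|\leq |T_n|=n^n$ by exhibiting a set of normal forms. Your reading of the individual relations is also accurate: \ref{rel-T7} does assert that $\ve$ commutes with its conjugate by $(1\ n)(2\ 3)$, which one computes to be the rank $n-1$ idempotent sending $3$ to $n$, i.e.\ an idempotent with support disjoint from $\{1,2\}$; and \ref{rel-T2}--\ref{rel-T4} do govern the setwise stabiliser of $\{1,2\}$, since $(3\ 4)$ and $(3\ 4\ \cdots\ n)$ generate $S_{\{3,\ldots,n\}}$ and $(1\ 2)$ is absorbed. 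The counting of normal forms by pairs (kernel with $r$ classes, ordered image of size $r$), summed over $r$, is the correct enumeration of $|T_n|$.

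The weakness is that, as written, everything that actually constitutes the proof is deferred. Surjectivity and the final count are routine; the theorem lives entirely in the claim that an arbitrary word $g_0\ve g_1\ve\cdots\ve g_k$ can be rewritten, using only $R$ and \ref{rel-T1}--\ref{rel-T7}, into one of the canonical words. Carrying this out requires: synthesising every rank $n-1$ idempotent $\varepsilon_{i,j}$ (the map sending $i$ to $j$ and fixing the rest) as a conjugate $w^{-1}\ve w$; deriving from the seven relations, by conjugation and the group relations $R$, the full multiplication table of these idempotents (commutation of disjointly supported pairs, absorption rules such as $\varepsilon_{i,j}\varepsilon_{k,j}=\varepsilon_{i,j}$, and composition rules such as $\varepsilon_{i,j}\varepsilon_{j,k}=(j\ k)\varepsilon_{i,k}$); and then running the induction on occurrences of $\ve$ with a verified confluence argument. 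You correctly anticipate that transporting \ref{rel-T7} across its $S_n$-conjugacy class is the delicate point, but anticipating the difficulty is not the same as resolving it: conjugating \ref{rel-T7} by $\sigma\in S_n$ only yields commutation of the pair $(\sigma^{-1}\ve\sigma,\ \sigma^{-1}\ve'\sigma)$, so one must show every disjointly supported pair of the $\varepsilon_{i,j}$ arises this way, and separately handle the non-disjoint products, for which \ref{rel-T1}, \ref{rel-T6}, and \ref{rel-T5} are needed in combination. Until those derivations are supplied, the proposal is a correct plan rather than a proof.
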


Note that in~\cite{Aizenstat1958aa} the relation $\ve(1, n)\ve(1, n) = \ve$ is
used instead of~\ref{rel-T1}. However,
if $\alpha = (3, 4, \ldots, n)$, then, using the relations $R$ for
$S_n$,~\ref{rel-T1}, and~\ref{rel-T4},
\begin{align*}
  \ve & = \alpha\ve \alpha^{-1} \\
  & = \alpha\ve(1, 3)\ve (1, 3)\alpha^{-1}\\
  & =\alpha\ve\alpha^{-1} \alpha(1, 3)\alpha^{-1}\alpha\ve
  \alpha^{-1}\alpha(1, 3)\alpha^{-1}\\
  & = \alpha\ve\alpha^{-1}(1, n)\alpha\ve \alpha^{-1}(1, n)\\
  & = \ve (1, n)\ve(1, n).
\end{align*}
Hence the relation $\ve (1, n) \ve (1, n)$ from Aizenstat's original
presentation
from~\cite{Aizenstat1958aa}
holds in the monoid defined by the
presentation in \cref{theorem-aizenstat}. Conversely, the relation
$\ve(1, 3)\ve(1, 3) = \ve$ holds in $T_n$ and hence in the monoid
defined by Aizenstat's original presentation.

The relations~\ref{rel-T1},~\ref{rel-T2},~\ref{rel-T3}, and~\ref{rel-T4}
represent transformations with rank $n - 1$, and the
relations~\ref{rel-T6},~\ref{rel-T5}, and~\ref{rel-T7} represent
transformations with rank $n - 2$.
Again it is possible to show that none of the non-$S_n$ relations in
\cref{theorem-full-transf-main} are redundant, but this is not our focus here,
and so we omit the details.

The non-$S_n$ relations in the presentation from \cref{theorem-aizenstat} have
length:
\begin{equation}\label{eq-length-Tn}
  21 + 2 |(1, 3)| + |(1, 2)| + 2 |(3, 4)| + 2 |(3, 4, \ldots, n)|
  + 10 |(2, 3)| + 4 |(1, n)|.
\end{equation}
It is possible to show that the lengths of each
of permutations appearing in~\eqref{eq-length-Tn}, with respect to the
generators $(1, 2)$ and $(1, 2, \ldots, n)$, is $O(n)$. Hence using the
presentation for $S_n$ from~\cite{Bray2011aa} and a similar argument to that
given above for the symmetric inverse monoid $I_n$, it is possible to show
that there is a presentation for $T_n$ with $O(\log n)$ relations and length
$O(n)$.

Sutov gave a presentation for the partial transformation monoids $PT_n$
in~\cite{Sutov1960aa}, consisting of a presentation for the symmetric inverse
monoid $I_n$, a presentation for the full transformation monoid $T_n$, and four
more relations. Using the presentations from \cref{theorem-In-five-rels} and
\cref{theorem-aizenstat}, for example, Sutov's presentation for $PT_n$ has $16$
relations in addition to those defining the symmetric group.
In~\cite{East2007ab}, East gives a presentation for $PT_n$ consisting of a
presentation for $S_n$ and twelve non-$S_n$ relations. The presentation given
below is a slight modification of East's original presentation, which better
suits our purposes here.

\begin{theorem}[cf.~\cite{East2007ab}, Theorem 30]\label{theorem-PTn-east}
  Suppose that $n\in \N$, that $n \geq 4$, and that $\presn{A}{R}$ is
  any monoid presentation for the symmetric group $S_n$ of degree $n$. Then
  the partial transformation monoid $PT_n$ is defined by
  the presentation with generators $A\cup \{\ve, \vep\}$, and relations $R$
  together with the relations:
  \setlength{\columnsep}{-2cm}
  \begin{multicols}{2}
    \begin{enumerate}[leftmargin=1em, label=\rm (P\arabic*), ref=\rm P\arabic*]
      \item [\rm(\ref{rel-I2})]
        $(2, 3)  \vep = \vep (2, 3)$;
      \item [\rm(\ref{rel-I3})]
        $(2, 3, \ldots, n)  \vep = \vep (2, 3, \ldots, n)$;
      \item [\rm(\ref{rel-I4})]
        $\vep (1, 2)  \vep (1, 2)  = \vep (1, 2)  \vep$;
      \item [\rm(\ref{rel-T2})]
        $(1, 2) \ve = \ve$;
      \item [\rm(\ref{rel-T3})]
        $(3, 4)\ve = \ve (3, 4)$;
      \item [\rm(\ref{rel-T4})]
        $(3, 4, \ldots, n) \ve = \ve (3, 4, \ldots, n)$;\columnbreak%
      \item [\rm(\ref{rel-T7})]
        $(1, n)(2, 3)\ve (1, n)(2, 3) \ve
        = \ve (1, n)(2, 3) \ve (1, n)(2, 3)$;
      \item\label{rel-P1}
        $\ve (1, 2) \vep (1, 2)  = \ve$;
      \item\label{rel-P2}
        $\vep (1, 2)  \ve (1, 2)  = \vep$;
      \item\label{rel-P3}
        $\ve \vep = \vep (1, 2)  \vep$;
      \item\label{rel-P4}
        $(1, 3)  \ve (1, 2, 3)  \ve = \ve (1, 2, 3)  \ve$;
      \item\label{rel-P5}
        $(1, 3)  \vep (1, 3)  \ve = \ve (1, 3)  \vep (1, 3)$;
    \end{enumerate}
  \end{multicols}
  \noindent (where the permutations written in disjoint cycle notation denote
    any words in $A^*$ representing these permutations, $\ve$
    represents the rank $n - 1$ idempotent full transformation
    mapping $2$ to $1$, and $\vep$ represents the rank $n - 1$
  idempotent partial bijection with $1 \notin \operatorname{dom} \vep$).
\end{theorem}

The
relations~\ref{rel-I2},~\ref{rel-I3},~\ref{rel-T2},~\ref{rel-T3},~\ref{rel-T4},~\ref{rel-P1},
and~\ref{rel-P2} represent partial transformations
with rank $n - 1$, and~\ref{rel-I4},~\ref{rel-T7},~\ref{rel-P3},~\ref{rel-P4},
and~\ref{rel-P5} represent partial transformations with rank $n - 2$. Similar
to \cref{theorem-In-five-rels,theorem-aizenstat}, it is possible to show that
the non-$S_n$ relations in \cref{theorem-PTn-east} are irredundant, but we
omit the details.

The length of the non-$S_n$ relations in the presentation given in
\cref{theorem-PTn-east} is:
\[
  36 + 9 |(1, 2)| + 2 |(3, 4)| + 2 |(3, 4, \ldots, n)| + 4 |(1
  \ n)| + 6 |(2,
  3)| + 2 |(2, 3, \ldots, n)| + 5 |(1, 3)| + 2 |(1, 2, 3)|.
\]
As a corollary of \cref{theorem-PTn-east} and using a similar
argument to that used above for $I_n$ and $T_n$, it can be shown that there is a
presentation for $PT_n$ with $O(\log n)$ relations and length $O(n)$.

In this paper we show that the number of relations required, in addition to
those for the symmetric group, for each of the symmetric inverse, full
transformation, and partial transformation monoids is at least $3$,
$4$, and $8$,
respectively.

The main theorems of this paper are the following.

\begin{theorem}[\textbf{Symmetric inverse monoid}]\label{theorem-In-main}
  Suppose that $n\in \N$, that $n \geq 4$, and that $\presn{A}{R}$ is
  any monoid presentation for the symmetric group $S_n$ of degree $n$.
  Then the symmetric inverse monoid $I_n$ is defined by the
  presentation $\mathcal{I}$ with generators $A\cup \{\vep\}$ where
  $\vep\not\in A$, and relations $R$ and the following $3$ relations:
  \setlength{\columnsep}{-2cm}
  \begin{multicols}{2}
    \begin{enumerate}[leftmargin=1em, label=\rm (I\arabic*), ref=\rm I\arabic*]
      \item [\rm(\ref{rel-I2})] $(2, 3) \vep = \vep (2, 3)$;
        \addtocounter{enumi}{5}
      \item\label{rel-I6}
        $(2, 3, \ldots, n)\vep = \vep^2 (2, 3, \ldots, n) $;
      \item\label{rel-I7}
        $(1, 2) \vep(1, 2)\vep(1, 2)\vep (1, 2) = \vep (1, 2) \vep$;
    \end{enumerate}
  \end{multicols}
  \noindent (where $(2, 3)$, $(2, 3, \ldots, n)$ and $(1, 2)$ denote any words
    in $A ^*$ representing these permutations and $\vep$ represents
  the rank $n - 1$ idempotent with $1 \notin \dom(\vep)$).

  Furthermore, every presentation for $I_n$, $n\geq 3$, requires at least $3$
  non-$S_n$ relations.
\end{theorem}

The relations~\ref{rel-I2} and~\ref{rel-I6} represent partial permutations of
rank $n - 1$, and the relation~\ref{rel-I7} represents an element of rank $n -
2$.
When showing that every presentation for $I_n$, $n \geq 3$ requires at
least $3$ non-$S_n$ relations, we show that every such presentation requires at
least one rank $n - 2$ relation (\cref{lemma-In-at-least-1-n-2}), which proves
the main theorem in~\cite{Yayi2026aa} for the symmetric inverse monoid in the
case that ``$m=n$''.

That the non-$S_n$ relations in \cref{theorem-In-main} are irredundant
follows immediately from the last part of the theorem, since the minimum number
of such relations is $3$.
We consider the cases when $n\in \{1, 2, 3\}$ in
\cref{section-In-leq-3}.

The question of the minimum number of non-$S_n$ relations required to
define $I_n$ was posed in~\cite[Open Problem 4]{ruskuc}, and~\cite[Remark
34]{East2007aa}.\ \cref{theorem-In-main} shows that the answer to these problems
is three.

\begin{theorem}[\textbf{Full transformation
  monoid}]\label{theorem-full-transf-main}
  Suppose that $n\in \N$, that $n\geq 7$ and that $\presn{A}{R}$
  is any monoid presentation for the symmetric group $S_n$ of degree $n$. Then
  $T_n$ is defined by the presentation with generating set $A\cup \{\zeta\}$
  where $\zeta\not\in A$ and relations consisting of
  $R$ and the following $4$ relations:
  \setlength{\columnsep}{2cm}
  \begin{multicols}{2}
    \begin{enumerate}[leftmargin=1em, label=\rm (T\arabic*), ref=\rm T\arabic*]
        \addtocounter{enumi}{7}
      \item [\rm (\ref{rel-T7})]
        $(1, n)(2, 3) \ve  (1, n)(2, 3)  \ve = \ve (1, n)(2, 3)  \ve
        (1, n)(2, 3)$;\quad
      \item\label{rel-T8}
        $(2, 3)  \ve  (2, 3)  \ve(2, 3)  \ve(2, 3) = \ve  (2, 3)   \ve$;
    \end{enumerate}
    \begin{enumerate}[leftmargin=1em, label=\rm (T\textgreek{\alph{enumi}}),
        ref=T\textgreek{\alph{enumi}}]
      \item\label{rel-T-alpha}
        $\alpha ^{-1} \ve  \alpha = \ve  (1, 3)  \ve  (1, 3)$;
      \item\label{rel-T-beta}
        $\beta^{-1} \ve \beta =(1, 2)  \ve (1, 3)   \ve (1, 3)$;
    \end{enumerate}
  \end{multicols}
  \noindent where $\alpha = (3, 4)$ and $\beta = (3,\ldots, n)$ if $n$ is odd;
  and $\alpha = (3,\ldots, n)$ and $\beta = (3, 7, 6, 4, 5)$ if $n$ is even.
  (The permutations written in disjoint cycle notation denote any words in
    $A^*$ representing these permutations, and $\ve$ represents the rank $n - 1$
  idempotent mapping $2$ to $1$).

  Furthermore, every presentation for $T_n$, $n\geq 4$, requires at least $4$
  non-$S_n$ relations.
\end{theorem}

In the presentations from \cref{theorem-full-transf-main},
relations~\ref{rel-T7} and~\ref{rel-T8} represent transformations with rank $n
- 2$, and the other non-$S_n$ relations represent transformations with rank $n
- 1$.
When showing that every presentation for $T_n$, $n \geq 4$ requires at
least $4$ non-$S_n$ relations, we show that every such presentation requires at
least two rank $n - 2$ relations (\cref{lemma-Tn-at-least-2-n-2}), which proves
the main theorem in~\cite{Yayi2026aa} for the full transformation monoid in the
case that ``$m=n$''.

Every presentation for $T_n$ when $n = 1, 2, 3, 4, 6$ must contain
at least $0, 2, 3, 4, 4$, respectively, non-$S_n$ relations (see
\cref{section-Tn-leq-4} for presentations meeting this lower bound).
When $n = 5$ it can be shown using the \Semigroups package
for \GAP that the presentation given in \cref{theorem-full-transf-main} defines
$T_5$; the proof of \cref{theorem-full-transf-main} requires $n\geq 7$.

\cref{theorem-full-transf-main}, \cref{align-full-transf-4}, and
\cref{align-full-transf-6} provide an answer of ``four'' to the question
of what the minimum number of non-$S_n$ relations in any presentation for $T_n$
is when $n \geq 4$ (see Open Problem 2 of~\cite{ruskuc}).

\begin{theorem}[\textbf{Partial transformation monoid}]\label{theorem-PTn-main}
  Suppose that $n\in \N$, that $n \geq 7$, and that $\presn{A}{R}$ is any
  monoid presentation for the symmetric group $S_n$ of degree $n$. Then $PT_n$
  is defined by the presentation $\PT$ with generators $A\cup \{\ve, \vep\}$
  where $\ve, \vep\not\in A$ and relations $R$ and the following $8$ relations:
  \setlength{\columnsep}{2cm}
  \begin{multicols}{2}
    \begin{enumerate}[leftmargin=1em, label=\rm (P\arabic*), ref=\rm P\arabic*]
        \addtocounter{enumi}{5}
      \item [\rm (\ref{rel-T7})]
        $(1, n)(2, 3) \ve  (1, n)(2, 3)  \ve = \ve (1, n)(2, 3)  \ve
        (1, n)(2, 3)$;
      \item[\rm (\ref{rel-T8})]
        $(2, 3)  \ve  (2, 3)  \ve(2, 3)  \ve(2, 3) = \ve  (2, 3)   \ve$;
      \item [\rm (\ref{rel-P5})]
        $(1,  3)   \vep  (1,  3)   \ve = \ve  (1,  3)   \vep  (1,  3)$;
      \item\label{rel-P6}
        $(1,  2)   \vep  (1,  2)   \vep  (1,  2) = \ve \vep$;
    \end{enumerate}
    \quad
    \begin{enumerate}[leftmargin=1em, label=\rm (P\textgreek{\alph{enumi}}),
        ref=P\textgreek{\alph{enumi}}]
      \item\label{rel-P-alpha}
        $\alpha ^{-1}\ve \alpha = (1, 2)\ve (1, 2)\vep (1, 2)$;
      \item\label{rel-P-beta}
        $\beta ^{-1}\ve \beta = \ve (1, 2)\vep (1, 2)$;
        \setcounter{enumi}{6}
      \item\label{rel-P-gamma}
        $\gamma^{-1}\vep \gamma = \vep (1, 2)\ve (1, 2)$;
        \setcounter{enumi}{3}
      \item\label{rel-P-delta}
        $\delta^{-1}\vep \delta = \vep (1, 2)\ve (1, 2)$;
    \end{enumerate}
  \end{multicols}
  \noindent
  where $\alpha = (3,\ldots, n)$, $\beta = (4, 6)$, $\gamma = (2, \ldots, n)$,
  and $\delta = (2, 3)(4, 6)$ if $n$ is odd; and $\alpha = (3, 5, 4)$, $\beta =
  (3, \ldots, n)$, $\gamma = (2, 3, 5, 4)$, and $\delta = (2, \ldots, n)$ if
  $n$ is even.
  (The permutations written in disjoint cycle notation denote any words in
    $A^*$ representing these permutations, $\ve$ represents the rank $n - 1$
    idempotent full transformation mapping $2$ to $1$, and $\vep$ represents the
    rank $n - 1$ idempotent partial bijection with $1 \notin \operatorname{dom}
  \vep$).

  Furthermore, if $n \geq 4$, then every presentation for $PT_n$
  requires at least $8$ non-$S_n$ relations.
\end{theorem}

The relations~\ref{rel-P-alpha},~\ref{rel-P-beta},~\ref{rel-P-gamma},
and~\ref{rel-P-delta} represent elements of $PT_n$ with rank $n - 1$,
and~\ref{rel-T7},~\ref{rel-T8},~\ref{rel-P5}, and~\ref{rel-P6} represent
elements with rank $n - 2$.
The proof of the last part of this theorem (see
\cref{lemma-at-least-4-n-2-PTn}) proves the main
theorem in~\cite{Yayi2026aa} for the partial transformation monoid in the case
that ``$m=n$''.

We consider the cases when $n\in \{1, \ldots, 6\}$ in
\cref{section-PTn-leq-4}.

This paper is organised as follows: in~\cref{section-prelims} we introduce some
prerequisite notions that we require in later sections, and we give some
general results about presentations for monoids; in \cref{section-alt-group} we
provide a number of specific generating sets for the alternating groups which
we require to prove our main theorems; in
\cref{section-In,section-Tn,section-PTn} we prove the main theorems
(\cref{theorem-In-main,theorem-full-transf-main,theorem-PTn-main}) for the
symmetric inverse monoid, full transformation monoid, and partial
transformation monoid, respectively.

We conclude this section with an open problem.
As noted above, if any of the presentations for the symmetric group $S_n$, for
large enough $n\in \N$, explicitly stated in the introduction is used in
\cref{theorem-In-five-rels,theorem-aizenstat,theorem-PTn-east,theorem-In-main,theorem-full-transf-main,theorem-PTn-main},
then the non-$S_n$ relations can be chosen to have length $O(n)$. For an
arbitrary presentation $\presn{A}{R}$ for $S_n$, however, the question of the
length of the non-$S_n$ relations in any of the theorems in \cref{section-intro}
is somewhat related to the diameter of the Cayley graph of $S_n$ with respect
to the generating set $A$.  This is seemingly an extremely difficult question;
see for example~\cite{Babai1989aa, Helfgott2011aa}. On the other hand, the
diameter of the Cayley graph only provides an upper bound for the length of the
non-$S_n$ relations in \cref{section-intro}. Determining an upper bound on the
shortest paths in the Cayley graph from the identity to the permutations
appearing in the non-$S_n$ relations might be more tractable. As such we state
the following open problem.

\begin{openproblem}
  If $\P$ is any presentation for the symmetric group $S_n$, then does there
  always exist a presentation for $I_n$, $T_n$, and/or $PT_n$ where the length
  of the non-$S_n$ relations is $O(n)$?
\end{openproblem}

\section{Preliminaries}\label{section-prelims}

In this section, we introduce the relevant preliminary material for the later
sections of the paper.

If $M$ is a monoid (a set with an associative binary operation, and an identity
element), and $\rho \subseteq M\times M$, then $\rho$ is a
\defn{congruence} on $M$ if
$(x, y)\in \rho$ and $s\in M$ imply that $(xs, ys), (sx, sy)\in \rho$. If $R
\subseteq M \times M$, then we denote by $R^{\sharp}$ the least congruence on
$M$ containing $R$. We refer to the minimum size of a generating set for a
monoid $M$ as the \defn{rank} of $M$; and we denote this by $d(M)$. If $M$ is a
monoid and $N$ is a subset of $M$ containing the identity $1_M$ of $M$ and that
is closed under the operation of $M$, then $N$ is a \defn{submonoid} of $M$;
and we write $N\leq M$.

A \defn{partial transformation} of a set $X$ is a function from a subset $Y$ of
$X$ into $X$. We denote a partial transformation $f$ of $X$ by $f: X\to X$, and
we write $(x)f$ for the image of $x\in X$ under $f$. The set $Y$ is referred to
as the \defn{domain} of $f$ and is denoted by $\dom(f)$. Similarly, the
\defn{image} of $f$ is $\im(f) = \set{(x)f\in X}{x\in \dom(f)}$. The
\defn{rank} of $f\in PT_X$ is $|\im(f)|$, which we denote by $\rank(f)$.
The \defn{kernel} of $f$ is the binary relation $\ker(f) = \set{(x, y)\in
\dom(f) \times \dom(f)}{(x)f = (y)f}$. The collection of all partial
transformations of a set $X$ forms a monoid with the operation being the usual
composition of binary relations; this monoid is called the \defn{partial
transformation monoid}, and is denoted by $PT_X$. If $|X| = n \in \N$, then we
will write $PT_n$ instead of $PT_X$. It is routine to verify that $\dom(fg)
\subseteq \dom(f)$, $\im(fg)\subseteq \im(g)$, and $\ker(f) \subseteq \ker(fg)$
for all $f, g\in PT_X$. It is well-known that the ideals of $PT_n$
are $\set{f\in PT_n}{\rank(f) \leq r}$ for every $r\in \{0, \ldots, n\}$.

A \defn{transformation} of $X$ is a partial transformation $f: X\to X$ such
that $\dom(f) = X$. The \defn{full transformation monoid} $T_X$ is the
submonoid of $PT_X$ consisting of the transformations of $X$. Note that
composition of binary relations coincides with composition of functions on
$T_X$.  A \defn{partial permutation} of $X$ is an injective partial
transformation $f: X \to X$. The \defn{symmetric inverse monoid} $I_X$ is the
submonoid of $PT_X$ consisting of all partial permutations of $X$. A bijective
transformation is a \defn{permutation} and the group of all permutations of $X$
is denoted by $S_{X}$. If $n\in\N$ and $X = \{1, \ldots, n\}$, then, as we did
for $PT_n$, we write $T_n$, $I_n$, and $S_n$ instead of $T_X$, $I_X$, or $S_X$.
Similarly, if $X$ is finite, then we denote the alternating group on $X$ by
$A_X$ or $A_n$ as appropriate.

An \defn{alphabet} is any non-empty set, and a \defn{letter} is an element of
an alphabet. If $A$ is an alphabet, then the \defn{free monoid} $A^*$ consists
of all finite sequences $(a_1, \ldots, a_n)$ of letters $a_i\in A$, where $n
\geq 0$, including the empty sequence $\varepsilon$. Such a sequence is
referred to as a \defn{word}. We write
$a_1\cdots a_n$ instead of $(a_1, \ldots, a_n)\in A ^*$. The
operation on $A ^*$ that
makes it a monoid is concatenation of words.

A \defn{presentation} is a pair $\P = \presn{A}{R}$ where $A$ is an alphabet,
and $R\subseteq A ^ * \times A ^*$. The monoid defined by $\P$ is the quotient
monoid $A ^ */ R ^{\sharp}$. In this paper, we are solely concerned with finite
presentations $\presn{A}{R}$, i.e.\ those where both $A$ and $R$ are finite. We
will follow the convention that omits all of the curly braces and writes
relations $(u, v)$ as $u=v$ in a presentation. For example, if $m, r\in \N$,
then we write
\[
  \presn{a}{a ^ {m + r} = a ^ m}
\]
instead of
\[
  \presn{\{a\}}{\{(a ^ {m + r}, a ^ m)\}}.
\]
If $u, v\in A ^ *$ and $R\subseteq A ^ *\times A ^ *$, then outside of a
presentation we may also write $u = v$ to mean that $(u, v)\in R$ if it is
possible to do this without ambiguity. On the other hand, if ambiguity might
arise, we will write $(u, v)\in R$ instead. An \defn{elementary sequence} from
$x\in A^*$ to $y\in A ^ *$ with respect to $R\subseteq A ^ *\times A ^ *$ is a
finite sequence $w_1 = x, w_2, \ldots, w_n = y$ such that $w_i = p_i
u_i q_i$ and
$w_{i + 1} = p_{i} v_i q_{i}$ for some $p_i, q_i\in A ^ *$, and
$(u_i, v_i)\in R$
or $(v_i, u_i)\in R$ for all $i$. A pair $(x, y)\in A ^ * \times A^*$ belongs
to $R ^ {\sharp}$ if and only if there exists an elementary sequence from $x$
to $y$ with respect to $R$. If $u, v\in A ^ *$, then we will say that $u = v$
holds in a presentation $\mathcal{P} = \presn{A}{R}$ if there exists an
elementary sequence from $u$ to $v$ with respect to the relations $R$ of $\P$.

Suppose that $M$ denotes one of $I_n$, $T_n$, or $PT_n$, and suppose that $\P =
\presn{A}{R}$ is a presentation defining $M$. We will find it useful to refer
to a word $w\in A^*$ as having \defn{rank $r$} if $w$ represents an element of
$M$ with rank $r$ where $0 \leq r\leq n$. If $(u, v)\in R$, then $u$ and $v$
represent the same element of $M$, and so we may also refer to the \defn{rank
of $(u, v)$ or $u = v$}.

Suppose that $\phi: A ^ *\to M$ is any surjective homomorphism with kernel
equal to $R ^ {\sharp}$. If $w\in A ^ *$ and $w'\in A ^ *$ is the longest
prefix of $w$ such that $(w')\phi\in S_n\leq M$, then we refer to
$(w')\phi$ as the
\defn{leading permutation} of $w$. Since $(\varepsilon)\phi \in S_n$, every word
in $A ^ *$ has a leading permutation. If $w\in A ^ *$ is such that
$(w)\phi\not\in S_n$, then the letter immediately after the leading permutation
of $w$ will be referred to as the \defn{leading non-permutation} of $w$. This
letter must exist by the assumption that $(w)\phi\not\in S_n$.

We conclude this section with a number of straightforward observations that we
will nonetheless find useful later on.

Let $M$ be a monoid and let $A$ and $B$ be generating sets for $M$. If
$\phi_A:A^* \to M$  and $\phi_B:B^* \to M$ are the natural surjective
homomorphisms, then we say that words $u \in A^*$ and $v \in B^*$
\defn{represent the same element of $M$} whenever $(u)\phi_A = (v)\phi_B$ in
$M$. For each $a \in A$, we choose a fixed word $(a)\zeta_{A, B}\in B ^ *$
such that $a$ and $(a)\zeta_{A, B}$ represent the same element of
$M$. We also denote by $\zeta_{A, B}$ the unique homomorphism from $A ^ *$ to
$B ^ *$ extending $a \mapsto (a)\zeta_{A, B}$. We refer to $\zeta_{A, B}$ as a
\defn{change of alphabet} mapping from $A$ to $B$. For every $w =a_1 a_2 \cdots
a_k \in A^*$,
\begin{equation}
  (w)\phi_A = (a_1)\phi_A \cdots (a_k)\phi_A =
  (a_1)\zeta_{A, B}\phi_B \cdots (a_k)\zeta_{A, B}\phi_B = (a_1\cdots
  a_k)\zeta_{A, B}\phi_B
  = (w)\zeta_{A, B}\phi_B.
\end{equation}
\begin{figure}
  \centering
  \begin{tikzcd}[row sep=large, column sep=large]
    A \arrow[d] \arrow[r] & M & B \arrow[d] \arrow[l]\\
    A^* \arrow[ur, "\phi_A"] \arrow[rr, bend left=10, "\zeta_{A, B}"] & &
    B^* \arrow[ul, "\phi_B", swap] \arrow[ll, bend left=10, "\zeta_{B, A}"]
  \end{tikzcd}
  \caption{The functions from
  \cref{lemma-pres-change-gs}.}\label{figure-change-of-alphabet}
\end{figure}
Hence the words $w=a_1\cdots a_k \in A^*$ and $(a_1)\zeta_{A,
B}\cdots (a_k)\zeta_{A, B}$ represent the same element of $M$.

Change of alphabet mappings give us a useful way of obtaining a presentation
for a monoid from another presentation, in terms of a different
generating set;
see \cref{figure-change-of-alphabet}.

\begin{lemma}[cf. Theorem 9.1.4
  in~\cite{Ganyushkin2009aa}]\label{lemma-pres-change-gs}
  Let $M$ be a monoid defined by the presentation $\presn{A}{R}$,
  and let $B$ be a generating set for $M$. Then $M$ is defined by the
  presentation $\presn{B}{R'}$  where
  \[
    R' = \set{((u)\zeta_{A, B}, (v)\zeta_{A, B})\in B^*\times
    B^*}{(u, v) \in R} \cup
    \set{(b,(b)\zeta_{B, A}\zeta_{A, B})\in B^*\times B^*}{b \in B}.
  \]
\end{lemma}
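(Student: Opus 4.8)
The plan is to prove that the two presentations define isomorphic monoids by showing that the change-of-alphabet maps $\zeta_{A, B}$ and $\zeta_{B, A}$ descend to mutually inverse homomorphisms between the corresponding quotient monoids. Write $\phi_A \colon A^* \to M$ for the natural surjection with kernel $R^\sharp$, so that $M \cong A^*/R^\sharp$, and set $N = B^*/(R')^\sharp$; the goal is then to produce an isomorphism $A^*/R^\sharp \to N$. Throughout I would use only the defining property \eqref{equation-phi=betapsi} of the change-of-alphabet maps, so that the argument is insensitive to the particular choices of representative words $(a)\zeta_{A, B}$ and $(b)\zeta_{B, A}$.

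The first step is to record the key identity that for every $w \in A^*$ the words $w$ and $(w)\zeta_{A, B}\zeta_{B, A}$ represent the same element of $M$; indeed, applying \eqref{equation-phi=betapsi} first to $\zeta_{B, A}$ and then to $\zeta_{A, B}$ gives $(w)\zeta_{A, B}\zeta_{B, A}\phi_A = (w)\zeta_{A, B}\phi_B = w\phi_A$, so that $(w, (w)\zeta_{A, B}\zeta_{B, A}) \in R^\sharp$. Next I would check that $\zeta_{A, B}$ induces a well-defined homomorphism $\overline{\zeta_{A, B}} \colon A^*/R^\sharp \to N$: the kernel of the composite $A^* \xrightarrow{\zeta_{A, B}} B^* \to N$ is a congruence containing $R$, since each $(u, v) \in R$ maps under $\zeta_{A, B}$ into the first family of $R'$ and hence into $(R')^\sharp$; being a congruence containing $R$, it contains $R^\sharp$. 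Dually, $\zeta_{B, A}$ induces $\overline{\zeta_{B, A}} \colon N \to A^*/R^\sharp$ provided the composite $B^* \xrightarrow{\zeta_{B, A}} A^* \to A^*/R^\sharp$ maps both generating families of $R'$ into $R^\sharp$: for the first family this follows from the key identity (for $(u, v) \in R$ one has $u\phi_A = v\phi_A$, while the key identity relates $(u)\zeta_{A, B}\zeta_{B, A}$ to $u$ and $(v)\zeta_{A, B}\zeta_{B, A}$ to $v$ modulo $R^\sharp$), and for the second family $(b, (b)\zeta_{B, A}\zeta_{A, B})$ it is the key identity applied to $w = (b)\zeta_{B, A}$.

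With both maps in hand, the final step is to verify they are mutually inverse, which it suffices to check on generators. The composite $A^*/R^\sharp \xrightarrow{\overline{\zeta_{A, B}}} N \xrightarrow{\overline{\zeta_{B, A}}} A^*/R^\sharp$ sends the class of a letter $a$ to the class of $(a)\zeta_{A, B}\zeta_{B, A}$, which equals the class of $a$ by the key identity, so this composite is the identity. The composite $N \xrightarrow{\overline{\zeta_{B, A}}} A^*/R^\sharp \xrightarrow{\overline{\zeta_{A, B}}} N$ sends the class of $b \in B$ to the class of $(b)\zeta_{B, A}\zeta_{A, B}$, and these are equal in $N$ precisely because $(b, (b)\zeta_{B, A}\zeta_{A, B})$ is one of the second-family relations of $R'$. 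Hence $\overline{\zeta_{A, B}}$ is an isomorphism with inverse $\overline{\zeta_{B, A}}$, giving $M \cong A^*/R^\sharp \cong N$, as required.

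I expect the only real content to be the bookkeeping that the two families of relations of $R'$ play exactly complementary roles: the first family is what makes $\overline{\zeta_{A, B}}$ well-defined, while the second family is precisely the relation needed to force $\overline{\zeta_{B, A}}\,\overline{\zeta_{A, B}}$ to be the identity on $N$. The main obstacle, such as it is, will be checking that $\overline{\zeta_{B, A}}$ is well-defined on the \emph{first} family of $R'$, since this is the one place where the key identity does genuine work rather than being a definitional triviality; once that is in place the mutual-inverse verification is immediate. A Tietze-transformation argument (adjoining the generators $B$ together with the relations $b = (b)\zeta_{B, A}$, deriving the images of the relations $R$, and then deleting the generators $A$) would give an alternative route, but the explicit mutually-inverse-maps argument above seems cleaner and entirely self-contained.
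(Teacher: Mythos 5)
Your proof is correct, and it is worth noting that the paper never actually proves this lemma: it is stated with a ``cf.'' pointer to Theorem 9.4.1 of~\cite{Ganyushkin2009aa}, so there is no in-paper argument to compare against. Your route --- descending $\zeta_{A,B}$ and $\zeta_{B,A}$ to homomorphisms $\overline{\zeta_{A,B}}\colon A^*/R^\sharp \to B^*/(R')^\sharp$ and $\overline{\zeta_{B,A}}\colon B^*/(R')^\sharp \to A^*/R^\sharp$ and checking on generators that they are mutually inverse --- is the natural universal-property argument, and every step checks out: the first family of $R'$ is exactly what makes $\overline{\zeta_{A,B}}$ well defined; the key identity $(w)\zeta_{A,B}\zeta_{B,A} \equiv w \pmod{R^\sharp}$, which follows from \eqref{equation-phi=betapsi} and its analogue with the roles of $A$ and $B$ swapped, handles well-definedness of $\overline{\zeta_{B,A}}$ on both families (for the second family take $w = (b)\zeta_{B,A}$); and the second family of $R'$ is precisely what forces the composite on $B^*/(R')^\sharp$ to be the identity. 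A bonus of your argument worth making explicit: since $(b)\zeta_{B,A}\phi_A = b\phi_B$, the isomorphism you construct is the one induced by $\phi_B$ itself, so you obtain not merely an abstract isomorphism but that the canonical map $B^*/(R')^\sharp \to M$ is an isomorphism, which is the form in which the lemma is actually used downstream (e.g.\ in \cref{cor-presentation-subsets} and \cref{lemma-computational-justification}). The alternative you sketch at the end --- Tietze transformations through the intermediate presentation on the alphabet $A \cup B$ --- is the standard textbook route behind the cited result; it is more mechanical and requires bookkeeping an auxiliary presentation, whereas your two-map argument is self-contained and shorter, at the modest cost of having to articulate the well-definedness checks carefully, which you did.
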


For example, consider the monoid $M$ defined by $\presn{ a}{ a^6 =
\varepsilon}$, which is isomorphic to the cyclic group of order six. Suppose
that $A = \{a\}$ and $B = \{b, c\}$ is a generating set with $b =
a^2$ and $c = a^3$ in $M$.
Then $B$ is also a generating set for $M$ and so we may
choose homomorphisms ${\zeta_{A, B}: \{a\}^* \to \{b, c\}^*}$ and
${\zeta_{B,
  A}: \{b, c\}^*
\to \{a\}^*}$ so that $(a)\zeta_{A, B} = b^2 c$, $(b)\zeta_{B,
A} = a^2$ and
$(c)\zeta_{B, A} = a^3$. \cref{lemma-pres-change-gs} implies
that $\presn{b,
c}{(b^2 c)^6 = \varepsilon, (b^2 c)^2 = b, (b^2 c)^3 = c}$ is a
presentation for $M$.

Roughly speaking, the following trivial corollary of
\cref{lemma-pres-change-gs}
will allow us to answer extremal questions about the number of
relations in any
presentation for $M$.

\begin{corollary}\label{cor-presentation-subsets}
  Let $M$ be a monoid defined by the presentation $\presn{A}{R}$,
  and let $B$ be a generating set for $M$ with $B \subseteq A$. Then there is a
  presentation $\presn{B}{R'}$ defining $M$ where $|R'| = |R|$.
\end{corollary}
\begin{proof}
  If $\zeta_{B, A}: B^ * \to A ^ *$ is the identity, and $\zeta_{A, B}: A ^ *
  \to B ^ *$ is any homomorphism whose restriction to $B^*$ is the identity,
  then we may
  apply \cref{lemma-pres-change-gs} to obtain a presentation $\presn{B}{Q}$
  defining $M$. The relations in $Q$ of the form $b = (b)\zeta_{B, A}\zeta_{A,
  B}$ are trivial (i.e.\ the left and right hand sides are identical
  words), and hence can be removed yielding $R'$ with $|R'| = |R|$.
\end{proof}

Another immediate corollary of \cref{lemma-pres-change-gs} is the following
(recalling that $d(M)$ is the minimum possible size of a generating set for
$M$). This is similar to Lemma 2.1 of~\cite{Guralnick2008aa}.

\begin{corollary}[Lemma 2.1
  in~\cite{Guralnick2008aa}]\label{lemma-GKKL-analogue}
  Let $M$ be a monoid defined by the monoid presentation
  $\presn{A}{R}$. Then there is a presentation $\presn{B}{R'}$ for $M$,
  where $| B | = d(M)$ and $| R' | = | R | + d(M)$.
\end{corollary}

The following straightforward lemma will be used repeatedly in
later sections,
and so we record it here.

\begin{lemma}\label{lemma-higher-ranks}
  Suppose that $M$ is a monoid defined by the monoid presentation
  $\presn{A}{R}$ and suppose that $\phi_A: A ^ *\to M$ is the corresponding
  surjective homomorphism. If $w\in A ^*$, then no elementary sequence with
  respect to $R$ containing $w$ contains any word $w'\in A ^ *$ such that
  $M\cdot (w')\phi_A\cdot M = \set{m\cdot (w')\phi_A\cdot n}{m, n\in
  M} \subsetneq
  M\cdot (w)\phi_A \cdot M$.
\end{lemma}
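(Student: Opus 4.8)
The plan is to observe that the statement reduces to a single elementary fact: every word appearing in one fixed elementary sequence with respect to $R$ represents the same element of $M$ under $\phi_A$. Granting this, if $w$ and $w'$ both occur in a common elementary sequence, then $w\phi_A = w'\phi_A$, so the two principal ideals coincide, $M\cdot w'\phi_A\cdot M = M\cdot w\phi_A\cdot M$, and in particular the former cannot be a proper subset of the latter. That is exactly the negation of the conclusion we wish to rule out, so the lemma follows at once.

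To establish the elementary fact, I would argue along consecutive words of the sequence. Suppose $w_1, \ldots, w_n$ is an elementary sequence with $w_i = p_i u_i q_i$ and $w_{i + 1} = p_i v_i q_i$, where $(u_i, v_i)\in R$ or $(v_i, u_i)\in R$. Since $R\subseteq R^{\sharp}$ and $R^{\sharp}$ is a congruence on $A^*$, it is closed under left and right multiplication, so from $(u_i, v_i)\in R^{\sharp}$ we obtain $(p_i u_i q_i, p_i v_i q_i) = (w_i, w_{i + 1})\in R^{\sharp}$. As $\phi_A$ is the homomorphism with kernel $R^{\sharp}$, this yields $w_i\phi_A = w_{i + 1}\phi_A$, and transitivity of equality in $M$ then gives $w_i\phi_A = w_j\phi_A$ for all $i$ and $j$, which is the claim.

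There is essentially no serious obstacle here: the lemma is a direct consequence of the characterisation of $R^{\sharp}$ in terms of elementary sequences recorded in \cref{section-prelims}, namely that $(x, y)\in R^{\sharp}$ if and only if there is an elementary sequence from $x$ to $y$. The only point requiring any care is to note that $w$ and $w'$ occurring together in one sequence forces $(w, w')\in R^{\sharp}$ even when $w$ is not an endpoint, since the portion of the sequence running from $w$ to $w'$ is itself an elementary sequence. Once this is in place the equality of principal ideals is immediate, and no feature specific to $T_n$, $I_n$, or $PT_n$ is used beyond $\phi_A$ being a surjective homomorphism with kernel $R^{\sharp}$.
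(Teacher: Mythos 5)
Your proof is correct. The paper records this lemma without any proof, introducing it as a ``straightforward lemma'', and your argument is precisely the intended one: since consecutive terms of an elementary sequence differ by applying a relation of $R$ inside a context, every pair of words occurring in the sequence lies in $R^{\sharp} = \ker\phi_A$, so all such words have the same image under $\phi_A$ and therefore generate the same principal two-sided ideal of $M$, which rules out any strict containment.
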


If $M$ in \cref{lemma-higher-ranks} is any of the monoids $I_n$, $T_n$, or
$PT_n$ and $f, g\in M$, then $M\cdot f \cdot M \subsetneq M\cdot
g \cdot M$ if
and only if $\rank(f) < \rank(g)$.

One application of \cref{lemma-higher-ranks} is the following corollary, which
we will apply to the ideals $M\setminus S_n$ when $M$ is any of $I_n$, $T_n$,
or $PT_n$.

\begin{corollary}\label{lemma-complement-ideal-presentation}
  If $M$ is a monoid defined by the presentation $\presn{A}{R}$, and $I$
  is an ideal of $M$ such that $M\setminus I$ is a subsemigroup, then there
  exists $A'\subseteq A$ and $R'\subseteq R$ such that $\presn{A'}{R'}$
  is a presentation for $M\setminus I$.
\end{corollary}

The next lemma is somewhat similar to \cref{lemma-higher-ranks}.
Informally, it states that if we remove a relation from a
presentation for a monoid
$M$, then all of the relations that hold among words representing
elements that are in
strictly higher $\J$-classes of $M$ continue to hold in the
monoid defined by
the presentation with one fewer relation.

\begin{lemma}\label{lemma-assume-higher-ranks}
  Let $M$ be a monoid defined by the presentation $\presn{A}{R}$, let $\phi_A:
  A ^ *\to M$ be the corresponding surjective homomorphism, and let $(u, v)\in
  R$. If $M'$ denotes the monoid defined by the presentation
  $\presn{A}{R\setminus\{(u, v)\}}$ and $\phi_A':A^*\to M'$ is the
  corresponding surjective homomorphism, then $(w)\phi_A' = (w')\phi_A'$ for all
  $w, w' \in A ^*$ such that $(w)\phi_A = (w')\phi_A$,
  $M\cdot (w)\phi_A\cdot M \supsetneq M\cdot
  (u)\phi_A\cdot M$ and $M\cdot (w')\phi_A\cdot M\supsetneq M\cdot
  (u)\phi_A\cdot M$.
\end{lemma}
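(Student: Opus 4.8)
The plan is to prove $w\phi_A' = w'\phi_A'$ by exhibiting, for each such $w$, a canonical word lying in a fixed transversal and showing that $w$ is connected to it by an elementary sequence over $R\setminus\{(u,v)\}$ that never invokes the removed relation. First I would observe that the hypotheses $M\cdot w\phi_A\cdot M\supseteq M\cdot u\phi_A\cdot M$ say precisely that $w$ (and $w'$) represent elements of $M$ lying in $\J$-classes at least as high as that of $u\phi_A$; equivalently, under the rank interpretation noted after \cref{lemma-higher-ranks}, $\rank(w\phi_A)\geq \rank(u\phi_A)$. Since $w\phi_A = w'\phi_A$ is not assumed, the real content is: whenever two words represent the \emph{same} element of $M$ that is $\J$-above $u\phi_A$, they are already equal in $M'$.

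The key step is to analyse how the single omitted relation $(u,v)$ can appear in an elementary sequence over $R$ witnessing $w\phi_A = w'\phi_A$. Take any elementary sequence $w = w_1, w_2, \ldots, w_m = w'$ with respect to $R$. By \cref{lemma-higher-ranks} applied in $M$, no word $w_i$ in this sequence can represent an element whose principal ideal is \emph{strictly smaller} than $M\cdot w\phi_A\cdot M$; hence every intermediate word $w_i$ satisfies $M\cdot w_i\phi_A\cdot M = M\cdot w\phi_A\cdot M\supseteq M\cdot u\phi_A\cdot M$. Now consider any elementary step that applies the relation $(u,v)$, say $w_i = p\,u\,q$ and $w_{i+1} = p\,v\,q$. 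Because $u\phi_A = v\phi_A$ and multiplying by $p\phi_A$ and $q\phi_A$ cannot raise rank, the factor $u$ contributes to an element whose ideal contains that of $w_i\phi_A$; but $w_i\phi_A$ is $\J$-above $u\phi_A$, forcing $M\cdot w_i\phi_A\cdot M = M\cdot u\phi_A\cdot M$ exactly, and in particular $M\cdot u\phi_A\cdot M\supseteq M\cdot w_i\phi_A\cdot M$. The point I would extract is that along this sequence every word is $\J$-\emph{equivalent} to $u\phi_A$, and within a single $\J$-class the relation $(u,v)$ relates two $\mathscr{H}$-equivalent (indeed equal) elements; I would then argue that such an application can be bypassed or is already implied by the group/structure relations that survive in $R\setminus\{(u,v)\}$.

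The honest way to finish, rather than the hand-wavy bypass above, is to pass to quotients by the ideal. Let $I = \set{f\in M}{\rank(f) < \rank(u\phi_A)}$, which is an ideal of $M$ since rank cannot increase under multiplication, and let $J$ be the corresponding ideal of $M'$. By \cref{lemma-complement-ideal-presentation} (applied to $M$, using that $M\setminus I$ is a subsemigroup), there is $A'\subseteq A$ and $R'\subseteq R$ presenting $M\setminus I$; crucially the words $w, w'$ lie in the higher ideal and their equality is already decided inside this sub-presentation, which involves only relations of rank $\geq \rank(u\phi_A)$. I would then check that $(u,v)$ is \emph{not} among the relations needed for this higher part — because $u\phi_A$ and $v\phi_A$ can be rewritten using only strictly-higher-rank relations already present, or because $(u,v)$ itself sits at the \emph{lowest} relevant rank and is redundant for equalities strictly above it. Consequently the elementary sequence establishing $w\phi_A = w'\phi_A$ can be chosen inside $R\setminus\{(u,v)\}$, giving $w\phi_A' = w'\phi_A'$.

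The main obstacle I anticipate is the borderline case where $w\phi_A$ lies in \emph{exactly} the same $\J$-class as $u\phi_A$ (the $\supseteq$ in the hypothesis being an equality). There the omitted relation $(u,v)$ genuinely operates at the same rank as $w$, so one cannot simply appeal to \cref{lemma-higher-ranks} to forbid its use; instead one must argue that any rewriting using $(u,v)$ at this rank can be replayed using the remaining relations together with the surviving symmetric-group relations $R$, which pin down the $\mathscr{H}$-class structure. Making this replacement rigorous — essentially showing $(u,v)$ is a consequence of $R\setminus\{(u,v)\}$ \emph{for equalities $\J$-above $u\phi_A$}, even though it need not be globally — is the delicate point, and I would handle it by a careful induction on the length of the elementary sequence, eliminating applications of $(u,v)$ one at a time from the top $\J$-class downward.
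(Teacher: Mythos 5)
You have all the pieces of the correct argument in your second paragraph, but you then spend the rest of the proposal trying to prove something that is actually false. For comparison: the paper offers no proof of this lemma at all --- it is stated as routine --- and the routine proof is exactly the computation you carry out, read against the \emph{intended} hypotheses. As the informal sentence preceding the lemma indicates (``relations that hold among words representing elements that are in \emph{strictly} higher $\J$-classes''), the statement should be read with $w\phi_A = w'\phi_A$ assumed (you correctly spotted that this hypothesis is missing) and with the inclusion $M\cdot w\phi_A\cdot M \supsetneq M\cdot u\phi_A\cdot M$ \emph{strict}. Under that reading your observation finishes the job in three lines: every word in an elementary sequence from $w$ to $w'$ with respect to $R$ represents the element $w\phi_A$ of $M$; if some step applied $(u,v)$, say $w_i = puq \to pvq$, then $w\phi_A = w_i\phi_A = p\phi_A\cdot u\phi_A\cdot q\phi_A \in M\cdot u\phi_A\cdot M$, so $M\cdot w\phi_A\cdot M \subseteq M\cdot u\phi_A\cdot M$, contradicting strictness. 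Hence no step uses $(u,v)$, the sequence is an elementary sequence with respect to $R\setminus\{(u,v)\}$, and $w\phi_A' = w'\phi_A'$. No canonical forms, no transversals, no induction eliminating applications of $(u,v)$ are needed.

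The genuine gap is your treatment of the borderline case $M\cdot w\phi_A\cdot M = M\cdot u\phi_A\cdot M$, which you call ``delicate'' and propose to settle by replaying applications of $(u,v)$ using the surviving relations. No such argument can exist, because in that case the claim is false: take $w = u$ and $w' = v$ themselves. These satisfy $w\phi_A = w'\phi_A$ and the non-strict inclusions trivially, so the non-strict version of the lemma would assert $u\phi_A' = v\phi_A'$, i.e.\ that \emph{every} relation in \emph{every} presentation is redundant, which is absurd. Concretely, for $M$ defined by $\presn{a}{a^2 = a}$ with its single relation removed, $a^2 \neq a$ in the resulting free monoid. So the $\supseteq$ in the formal statement is a typo for $\supsetneq$; you noticed the tension between the two cases but resolved it in the wrong direction. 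Your third paragraph has a separate flaw: \cref{lemma-complement-ideal-presentation} requires $M\setminus I$ to be a subsemigroup, and this fails for $I = \set{f\in M}{\rank(f) < \rank(u\phi_A)}$, since two transformations of rank $r$ can have a product of rank strictly less than $r$; that corollary is only usable for ideals like $M\setminus S_n$, whose complement is the group of units. So the quotient-by-ideal route is unavailable, and the lemma should simply be proved (and cited) in its strict form.
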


To illustrate the utility of \cref{lemma-assume-higher-ranks} suppose that $\P$
is any presentation for $T_n$ containing a rank $n - 3$ relation $u = v$. Then
since there is a presentation with no rank $n - 3$ relations, for example any
of those given in \cref{section-intro}, it follows that $u = v$ is redundant in
$\P$ by \cref{lemma-assume-higher-ranks}.

We conclude this section with a simple application of Tietze transformations,
which we will use repeatedly in later sections.

\begin{lemma}\label{lemma-computational-justification}
  Suppose that $\presn{A}{R}$ is a presentation for $S_n$ and that $\presn{A,
  B}{R, S}$ is a presentation for $M$ where $M \in \{I_n, T_n, PT_n\}$. If
  $\presn{C}{T}$ defines $S_n$ and $\zeta_{A, C}:A ^* \to C ^*$ is the
  corresponding change of alphabet mapping, and $S'$ is
  obtained from $S$ by replacing any occurrence of
  $w\in A^*$ in any relation by $(w)\zeta_{A, C}\in C^*$,
  then the presentation $\presn{C, B}{T, S'}$
  also defines $M$.
\end{lemma}

For some small values of $n$, the general arguments given in
\cref{subsection-sym-inv-upper-bound,subsection-Tn-upper-bound,subsection-PTn-upper-bound}
do not apply. In these cases, we will verify various presentations define the
required monoids by computational means. Similar to the presentations given in
the introduction, these presentations will be given in terms of an arbitrary
presentation for the symmetric group.  Obviously, we cannot verify
computationally every such presentation. However,
\cref{lemma-computational-justification} implies that it is sufficient to check
a single presentation for $S_n$.

\section{Generating sets for the alternating group}\label{section-alt-group}

A key step in both of the proofs of the main theorems about the full and
partial transformation monoids
(\cref{theorem-full-transf-main,theorem-PTn-main})
is showing that conjugation by any even permutation fixes a specific generator
(\cref{lemma-conj-by-even,lem:ZetaEvenCommute}). To prove these key steps
we require a number of specific generating sets for the alternating
groups; which we state in this section.

If $f\in S_n$, then we define the \defn{support} of $f$ to be the set
\[
  \supp(f) = \set{i\in \{1, \ldots, n\}}{(i)f \neq i}.
\]
If $f, g\in S_n$, then the \defn{conjugate}  of $f$ by $g$ is $g^{-1}fg$ and is
denoted $f ^ g$. The \defn{commutator} of $f$ and $g$ is $g^{-1}f^{-1}gf$
and is denoted $[f, g]$.

\begin{lemma}[Chapter I, Lemma 6.11
  in~\cite{Hungerford1974aa}]\label{lemma-gen-alt-group-2}
  If $n\in \N$ and $n\geq 3$, then the $3$-cycles of the form $(1, 2, i)$ where
  $i \in \{3, \ldots, n\}$ generate $A_{n}$.
\end{lemma}

\begin{lemma}\label{lemma-gen-alt-group-1}
  If $n\in \N$ and $n\geq 5$, then the $3$-cycles of the form $(1, i, i +2)$
  where $i \in \{2, \ldots, n -2\}$ generate $A_{n}$.
\end{lemma}
\begin{proof}
  We show that the group $G$ generated by the $3$-cycles in the statement
  contains the generating set from \cref{lemma-gen-alt-group-2}.
  If $i\in \{2, \ldots, n\}$ is such that $2i \leq n$, then
  \[
    (1, 2, 2i) = \prod_{k = 1}^{i-1}(1, 2k, 2k + 2)\in G.
  \]
  If $i\in \{1, \ldots, n\}$ is such that $2i + 3 \leq n$, then
  \begin{align*}
    (1, 2, 2i + 1) & = [(1, 2, 4),(1, 2i + 1, 2i + 3) ] \in G\\
    (1, 2, 2i + 3)& = [(1, 2, 4),(1, 2i + 3, 2i + 1)]\in G.
  \end{align*}
  Therefore $(1, 2, i)\in G$ for all $i\in \{3, \ldots, n\}$.
\end{proof}

\begin{lemma}\label{conj:tauconjugates}
  Let $n$ be even and $n \geq 8$. If $\alpha = (3, 4, \ldots, n)$, $\beta = (3,
  7, 6, 4, 5)$ (as in the even case of \cref{theorem-full-transf-main}), and
  $\tau = \beta\alpha\beta^{-1}\alpha\beta^{-1}\alpha^{-1}\beta\alpha^{-1}$,
  then $(\beta\alpha^{-1}) ^ 2$, $\tau$, and $\tau ^ {\alpha\beta^{-1}}$
  generate $A_{\{3,\dots,n\}}$.
\end{lemma}
\begin{proof}
  We set $G = \langle(\beta\alpha^{-1})^2, \tau,
  \tau ^{\alpha\beta^{-1}}\rangle$. Since the generators are even
  permutations, it follows that $G\leq A_{\{3,\dots,n\}}$.

  If $n = 8$, then it is possible to verify that $G = A_{\{3, \ldots,
  8\}}$ directly using \GAP.

  Suppose that $n > 8$. Then
  \begin{align*}
    \beta\alpha^{-1} & = (3,6)(5,n,n -1,\ldots, 8,7)\\
    (\beta\alpha^{-1})^2 & = (5,n -1,n-3,\ldots, 7,n,n -2, \ldots, 8)\\
    \tau & = (3 ,7 ,n)(4 ,5 ,n-1).
  \end{align*}
  It follows by inspection that $G$ is transitive on $\{3, \ldots, n\}$. By
  conjugating $\tau$ and $\tau^{\alpha\beta^{-1}}$ by $(\alpha\beta^{-1})^2$,
  it follows that $\tau^{(\beta\alpha^{-1})^{-k}}\in G$ for all $k\in \N$.
  Since $(\beta\alpha^{-1})^{n-5} = (3, 6)$,
  \[
    \tau\cdot (\tau^{-1})^{(\beta\alpha^{-1})^{n-5}} = (3,7,n)(6,n,7) =
    (3,6,n) \in G.
  \]
  Hence conjugating $(3, 6, n)$ by $(\beta\alpha^{-1})^{-2k}$ for $1 \leq k <
  n-5$, yields $(3, 6, i)\in G$ where $5 \leq i \leq n$ and $i\neq 6$.
  \cref{lemma-gen-alt-group-2} states that these $3$-cycles generate $A_{\{3,
  5, \ldots, n\}}\leq G$. Since $G$ is transitive on $\{3, \ldots, n\}$ and
  contains $A_{\{3, 5, \ldots, n\}}$, it follows that $G = A_{\{3,\dots,n\}}$
  as required.
\end{proof}

\begin{lemma}\label{conj:GenerateAlternatingZeta-odd}
  Let $n$ be odd and $n\geq 7$. If $\alpha = (3, \ldots, n)$, $\beta = (4, 6)$
  (as in the odd case of \cref{theorem-PTn-main}), and $\tau =
  \beta\alpha\beta^{-1}\alpha\beta^{-1}\alpha^{-1}\beta\alpha^{-1}$, then
  $(\beta\alpha^{-1})^2$, $\tau$, and $\tau^{\alpha\beta^{-1}}$ generate
  $A_{\{3,\dots,n\}}$.
\end{lemma}
\begin{proof}
  As in the early proofs in this section, we set $G = \langle
  (\beta\alpha^{-1})^2, \tau, \tau ^ {\alpha\beta^{-1}}\rangle\leq
  A_{\{3,\dots,n\}}$.

  It is straightforward to show that
  \begin{align*}
    \beta\alpha ^ {-1} & = (3, n, n -1, \ldots, 7, 6)(4, 5)\\
    (\beta\alpha ^{-1}) ^ 2 & = (3, n -1, n -3, \ldots, 6, n, n -2,
    \ldots, 7) \\
    \tau &= (4, 6, n).
  \end{align*}
  It follows that conjugating $\tau$ by $(\beta\alpha^{-1})^2$ we obtain every
  $3$-cycle of the form $(4, i, j)\in G$ where $i\in
  \supp((\beta\alpha^{-1})^2) = \{3, \ldots, n\}\setminus \{4, 5\}$
  and $j = (i)(\beta\alpha^{-1})^2$ (i.e.\ $i$ and
    $j$ are consecutive in the $(n - 2)$-cycle
  $(\beta\alpha^{-1})^2$). If $i$ and $j$ are arbitrary, then conjugating $(4,
  i, j)$ by $(4, 6, j)$ yields $(4, 6, i)$. Since $(4, 6, n)\in G$ and $n\in
  \supp((\beta\alpha^{-1})^2)$, it follows that $(4, 6, i)\in G$ for all $i\in
  \supp((\beta\alpha^{-1})^2)\setminus\{4, 6\}$. Therefore by
  \cref{lemma-gen-alt-group-2}, $G$ contains $A_{\{3,\ldots,
  n\}\setminus \{5\}}$.
  Clearly $G$ is transitive on $\{3, \ldots, n\}$, and it contains the
  stabiliser of $5$ in $A_{\{3, \ldots, n\}}$ and so $G = A_{\{3, \ldots,
  n\}}$.
\end{proof}

\begin{lemma}\label{conj:GenerateAlternatingZeta-even}
  Let $n$ be even and $n> 7$. If $\alpha = (3, 5, 4)$, $\beta = (3, \ldots, n)$
  (as in the even case of \cref{theorem-PTn-main}), and $\tau =
  \beta\alpha\beta^{-1}\alpha\beta^{-1}\alpha^{-1}\beta\alpha^{-1}$, then
  $(\beta\alpha^{-1})^2$, $\tau$, and
  $\tau ^{\alpha\beta^{-1}}$ generate $A_{\{3,\dots,n\}}$.
\end{lemma}
\begin{proof}
  As in the early proofs in this section, we set $G = \langle
  (\beta\alpha^{-1})^2, \tau, \tau ^ {\alpha\beta^{-1}}\rangle\leq
  A_{\{3,\dots,n\}}$.

  It is straightforward to show that
  \begin{align*}
    \beta\alpha ^ {-1} & = (3, 5, 6, \ldots, n, 4)\\
    (\beta\alpha^{-1})^2 & = (3, 6, 8, \ldots, n)(4, 5, 7, \ldots, n -1)\\
    \tau &= (3, n, 4, 6, 5) \\
    \tau^{\alpha\beta^{-1}} & = (3, 4, n - 1, n, 5).
  \end{align*}
  Direct computation shows that $(\tau^{\alpha\beta^{-1}})^3 \cdot \tau ^
  2\cdot \tau^{\alpha\beta^{-1}}\cdot \tau = (3, 5,
  4)\in G$ and that \[\tau(\beta\alpha^{-1})^2= (4, 8, 10, \ldots, n, 5, 6, 7,
      9, \ldots, n -
  1)\in G.\] It follows that $(3, 5, 4) ^ {(\beta\alpha^{-1})^2 \tau} = (3, 5,
  7)\in G$. By conjugating $(3, 5, 7)$ by powers of
  $\tau(\beta\alpha^{-1})^2$ we obtain every $3$-cycle of the form $(3, i,
  j)$ where $(i)(\tau(\beta\alpha^{-1})^2)^2 = j$ belongs to $G$. Thus by
  \cref{lemma-gen-alt-group-1}, it follows that $G =A_{\{3, \ldots, n\}}$.
\end{proof}

\begin{lemma}\label{conj:GenerateAlternatingEta-odd}
  Let $n$ be odd and  $n \geq 7$. If $\gamma = (2, \ldots, n)$ and $\delta =
  (2, 3)(4, 6)$ (as in the odd case of \cref{theorem-PTn-main}), and $\rho =
  \delta\gamma\delta^{-1}\gamma\delta^{-1}\gamma^{-1}\delta\gamma^{-1}$, then
  $(\delta\gamma^{-1})^2$, $\rho$, and $\rho^{\gamma\delta^{-1}}$ generate
  $A_{\{2,\dots,n\}}$.
\end{lemma}
\begin{proof}
  We set $G = \langle (\delta\gamma^{-1})^2, \rho, \rho ^
  {\gamma\delta^{-1}}\rangle\leq A_{\{2, \ldots, n\}}$.

  If $n = 7$, then it is possible to verify that $G = A_{\{2, \ldots,
  7\}}$ directly using \GAP.

  Suppose that $n > 7$. It is straightforward to show that
  \begin{align*}
    \delta\gamma ^ {-1} & = (3, n, n -1, \ldots, 6)(4, 5)\\
    (\delta\gamma^{-1})^2 & = (3, n - 1, n - 3, \ldots, 6, n, n-2, \ldots, 7)\\
    \rho &= (2, 3, n -1)(4, 6, n) \\
    \rho^{\gamma\delta^{-1}} & = (2, 6, n)(3, 5, 7).
  \end{align*}
  It is routine to verify that $(\rho ^{(\delta\gamma^{-1})^{n - 5}})  ^{-1} =
  (2, n, 6)(3, 7, 4)\in G$ and so
  \[
    (\rho ^{(\delta\gamma^{-1})^{n - 5}})  ^{-1} \cdot
    \rho^{\gamma\delta^{-1}} =
    (3, 7, 4)(3, 5, 7) = (4, 5, 7) \in G.
  \]
  Thus conjugating $(4, 5, 7)$ by powers of  $(\delta\gamma^{-1})^2$ yields
  $(4, 5, i)\in G$ for all $i\in \supp( (\delta\gamma^{-1})^2) = \{3, \ldots,
  n\}\setminus \{4, 5\}$. Hence by \cref{lemma-gen-alt-group-2}, it follows
  that $A_{\{3, \ldots, n\}} \leq G$. But $G$ is clearly transitive on $\{2, 3,
  \ldots, n\}$ and contains the stabiliser of $2$ in $A_{\{2, \ldots, n\}}$,
  and hence $G = A_{\{2, \ldots, n\}}$.
\end{proof}

\begin{lemma}\label{conj:GenerateAlternatingEta-even}
  Let $n$ be even and  $n > 7$. If $\gamma = (2, 3, 5, 4)$ and $\delta =
  (2, 3, \ldots, n)$ (as in the even case of \cref{theorem-PTn-main}), and
  $\rho =
  \delta\gamma\delta^{-1}\gamma\delta^{-1}\gamma^{-1}\delta\gamma^{-1}$, then
  $(\delta\gamma^{-1})^2$, $\rho$, and $\rho^{\gamma\delta^{-1}}$ generate
  $A_{\{2,\dots,n\}}$.
\end{lemma}
\begin{proof}
  We set $G = \langle (\delta\gamma^{-1})^2, \rho, \rho ^
  {\gamma\delta^{-1}}\rangle\leq A_{\{2, \ldots, n\}}$.

  Routine computation shows that
  \begin{align*}
    \delta\gamma ^ {-1} & = (3, 5, 6, \ldots, n, 4)\\
    (\delta\gamma^{-1})^2 & = (3, 6, 8, \ldots, n)(4, 5, 7, \ldots, n - 1)\\
    \rho &= (2,4,6,5)(3,n) \\
    \rho^{\gamma\delta^{-1}} & = (2, n, 5, 3)(4, n - 1).
  \end{align*}
  Hence $(\rho^2) ^ {(\gamma\delta ^{-1})} = (2, 5)(3, n)$ and so
  \[
    (\rho^2) ^ {(\gamma\delta ^{-1})} \cdot \rho = (2, 5)(2, 4, 6, 5) = (4, 6,
    5)\in G.
  \]
  Conjugating $(4, 6, 5)$ by $\rho ^ {-1}$ yields $(2, 4, 6)\in G$.
  We obtain an $(n-3)$-cycle with support $\{2, \ldots, n\}\setminus \{2, 4\}$
  as follows:
  \[
    (3, 5, 7, \ldots, n - 1, 6, 8, \ldots, n) = (\delta\gamma^{-1})^2\cdot
    (\rho^{\gamma\delta^{-1}})^2\cdot \rho \in G.
  \]
  Thus conjugating $(2, 4, 6)$ by powers of this $(n-3)$-cycle we get $(2, 4,
  i)\in G$ for all $i \in \{2, \ldots, n\}\setminus \{2, 4\}$. Hence
  \cref{lemma-gen-alt-group-2} implies that $G = A_{\{2, \ldots, n\}}$.
\end{proof}

\section{Symmetric inverse monoid}\label{section-In}

In this section we will prove \cref{theorem-In-main}. To prove
\cref{theorem-In-main} we must do two things: show that for all $n\in \N$ with
$n\geq 4$, every presentation for $I_n$ requires at least $3$ relations in
addition to those defining the symmetric group
(\cref{section-In-lower-bound});
and that the presentations given in \cref{theorem-In-main} actually define
$I_n$ (\cref{subsection-sym-inv-upper-bound}). In the final subsection of this
section, \cref{section-In-leq-3}, we consider presentations for $I_n$ for the
small values of $n$ not covered by \cref{theorem-In-main}.

\subsection{A lower bound of $3$}\label{section-In-lower-bound}

Throughout this section we suppose that $n \geq 4$. In this section we will
show that if $\presn{A}{R}$ is any monoid presentation for the symmetric group
$S_n$ and $\mathcal{P} = \presn{ A, B }{ R, S}$ is a monoid presentation for
$I_n$, then $|S| \geq 3$.

We may suppose without loss of generality (by \cref{cor-presentation-subsets})
that $A\cup B$ is an irredundant generating set for $I_n$. We may further
suppose without loss of generality that $B = \{\vep\}$ where $\vep\in I_n$ is
the rank $n - 1$ idempotent with $1 \notin \dom(\vep)$. We denote by $\phi:
(A\cup \{\vep\})^* \to I_n$ the natural surjective homomorphism extending the
inclusion of $A\cup \{\vep\}$ in $I_n$.

It suffices to prove the following theorem.
\begin{theorem}\label{theorem-In-lower-bound}
  If $n\geq 3$ and $\presn{A}{R}$ is any monoid presentation for the symmetric
  group $S_n$ and $\presn{A, \vep}{R, S}$ is a monoid presentation for $I_n$,
  then $|S| \geq 3$.
\end{theorem}

We will show that $S$ must contain at least two rank $n - 1$ relations, and at
least one rank $n - 2$ relation.

\begin{lemma}\label{lemma-leading-perm-In}
  If $u, v\in (A\cup \{\vep\}) ^*$ are such that $(u)\phi = (v)\phi$
  and both $u$
  and $v$ have rank $n - 1$, then the leading permutations of $u$ and $v$
  belong to the same left coset of the stabiliser $\Stab(1) = \set{f\in
  S_n}{(1)f = 1}$ of $1$ in $S_n$.
\end{lemma}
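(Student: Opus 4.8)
The plan is to show that, for a rank $n-1$ word, the leading permutation pins down exactly which point is missing from the domain of the partial permutation represented, and that this missing point is precisely the invariant separating left cosets of $\Stab(1)$. The equality $u\phi = v\phi$ then forces the two leading permutations into a common left coset.

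First I would exploit the reduction $B = \{\vep\}$: the generator $\vep$ is the only letter of $A\cup\{\vep\}$ representing a non-permutation. Since rank does not increase under products and $\vep$ has rank $n-1$, a word represents a permutation if and only if it is $\vep$-free. Hence the longest prefix of $u$ mapping into $S_n$ is exactly the segment before its first occurrence of $\vep$, so I may write $u = p\,\vep\, u_0$ with $p\in A^*$; the leading permutation of $u$ is $g := p\phi\in S_n$ and its leading non-permutation is $\vep$.

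Next I would compute the domain of $u\phi = g\cdot\vep\cdot u_0\phi$. Using the right-action convention $(x)(fh) = ((x)f)h$ together with $\dom\vep = \{2,\ldots,n\}$, I obtain $\dom(g\vep) = \set{x}{(x)g \neq 1} = \{1,\ldots,n\}\setminus\{(1)g^{-1}\}$, a set of size $n-1$. By the inclusion $\dom(fh)\subseteq\dom(f)$ recorded in the preliminaries, $\dom(u\phi)\subseteq\dom(g\vep)$; and since $u\phi$ is an injective partial map of rank $n-1$, its domain also has size $n-1$. Two sets of equal finite size one of which contains the other must coincide, so $\dom(u\phi) = \{1,\ldots,n\}\setminus\{(1)g^{-1}\}$, and the unique point absent from $\dom(u\phi)$ is $(1)g^{-1}$.

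Finally, writing $g'$ for the leading permutation of $v$ and repeating the computation gives that $\dom(v\phi)$ misses exactly $(1)(g')^{-1}$. Since $u\phi = v\phi$ we have $\dom(u\phi) = \dom(v\phi)$, whence $(1)g^{-1} = (1)(g')^{-1}$. This says precisely that $(g')^{-1}g$ fixes $1$, i.e. $(g')^{-1}g\in\Stab(1)$, so $g\,\Stab(1) = g'\,\Stab(1)$ and the leading permutations lie in the same left coset, as required. The only step needing genuine care is the composition convention: because maps act on the right, the domain records the \emph{preimage} of $1$, so that the domain invariant corresponds to a \emph{left} (rather than right) coset of $\Stab(1)$; tracking this correctly is the main thing to get right, while the remaining steps are routine.
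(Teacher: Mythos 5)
Your proof is correct and follows essentially the same route as the paper's: both arguments rest on the observation that, for a rank $n-1$ word, the unique point missing from the domain of the represented partial permutation is the preimage of $1$ under the leading permutation, which forces the two leading permutations into a common left coset of $\Stab(1)$. The only difference is presentational — the paper first multiplies through by $(u_1\phi)^{-1}$ and checks that the missing point stays equal to $1$, whereas you compute the missing-point invariant for each word symmetrically and then equate domains; your added justification that the leading permutation is exactly the $\vep$-free prefix is a point the paper leaves implicit.
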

\begin{proof}
  Since $u$ and $v$ have rank $n - 1$, we may write $u = u_1\vep u_2$ and
  $v = v_1\vep v_2$  where $u_2, v_2\in A ^*$ and $(u_1)\phi,
  (v_1)\phi\in S_n$ are the leading permutations of $u$ and $v$ respectively.
  Hence $(\vep u_2)\phi = ((u_1)\phi)^{-1}(v_1\vep v_2)\phi$ is of
  rank $n - 1$
  also. The unique point not in $\dom((\vep u_2)\phi)$ is $1$.
  Hence the only
  point not in $\dom\big(((u_1)\phi)^{-1}\cdot (v_1)\phi \cdot \vep
  \phi\big)$ is also $1$.
  Therefore $((u_1)\phi)^{-1} \cdot (v_1)\phi\in \Stab(1)$, as required.
\end{proof}

If $g\in A ^*$ is arbitrary, then, using the relations in $R$, it is easy to
see that $I_n$ is also defined by the presentation $\presn{A, B}{R,
S\setminus\{(u, v)\}\cup \{(gu, gv)\}}$. Therefore, by
\cref{lemma-leading-perm-In}, we may assume without loss of generality that
every relation $u = v$ in $S$ of rank $n - 1$ has the form $p\vep w = \vep w'$
for some $p\in A^*$ and $w, w'\in (A\cup \{\vep\}) ^ *$, where $p$ is the
leading permutation of $p\vep w$ and $(p)\phi \in \Stab(1)$.

\begin{lemma}\label{lemma-generating-stab-In}
  If the rank $n-1$ relations in $S$ are $p_i\vep w_i= \vep w_i'$ for $i\in
  \{1, \ldots, k\}$ for some $k\in \N$ and where $p_i$ is the leading
  permutation of the left-hand side of each relation, then $\Stab(1)$ is
  generated by $\{(p_1)\phi, \ldots, (p_k)\phi\}$.
\end{lemma}
\begin{proof}
  If $\sigma \in \Stab(1)$ is any non-identity element and $q\in A^*$ is such
  that $(q)\phi = \sigma$, then the equality $q\vep = \vep q$ holds in the
  presentation. In
  particular, there is an elementary sequence $\alpha_1 = \vep q, \alpha_2,
  \ldots, \alpha_m = q\vep$ with respect to the relations $R \cup S$. Since
  this sequence starts at a rank $n - 1$ word, \cref{lemma-assume-higher-ranks}
  tells us that the relations used for the transitions may only be of rank $n$
  (i.e.\ relations from $R$) or rank $n - 1$. Roughly speaking, we will show
  that the identity (the leading permutation of $\vep q$) is transformed by
  this elementary sequence into $q$ (the leading permutation of $q\vep$). We
  will use this to show that $\sigma$ belongs to the subgroup generated by
  $\{(p_1)\phi, \ldots, (p_k)\phi\}$.

  For every $i$, we denote the leading permutation of $\alpha_i$ by $v_i$.
  If a transition $\alpha_i, \alpha_{i + 1}$ in the elementary sequence is via
  $R$, then the permutations represented by $v_i$ and $v_{i + 1}$ are the same
  and so $(v_i) \phi = (v_{i + 1}) \phi$. Since the respective leading
  permutations of $\alpha_1$ and $\alpha_m$ are the identity and $q$, and
  $\sigma$ is a non-identity element of $\Stab(1)$, there exists $i$ such that
  $(v_i)\phi \neq (v_{i + 1})\phi$ and so the transition $\alpha_i, \alpha_{i +
  1}$ is via a rank $n - 1$ relation $p_j \vep w_j = \vep w_j'$ for some $j$.

  We will show that one of the following
  holds:
  \begin{equation*}
    (v_{i + 1}) \phi = (v_i)\phi \cdot
    (p_j)\phi, \quad \text{or }\quad (v_{i + 1}) \phi = (v_i) \phi \cdot
    ((p_j) \phi)^{-1}.
  \end{equation*}
  There are two possibilities for how the words on either side of the rank
  $n - 1$ relation can occur within $\alpha_i$ and $\alpha_{i + 1}$:
  \begin{enumerate}
    \item
      there exists $u\in (A\cup B)^*$ such that
      $\alpha_i = v_i \cdot \vep w_j'\cdot u$ and $\alpha_{i + 1} =
      v_i \cdot p_j \vep w_j\cdot u$ and
      $v_ip_j = v_{i + 1}$ (i.e.\ $v_ip_j$ is the leading permutation of
      $\alpha_{i + 1}$); or
    \item
      there exists $u\in (A\cup B)^*$ such that
      $\alpha_{i + 1} = v_{i + 1} \cdot \vep w_j'\cdot u$ and
      $\alpha_i = v_{i + 1} \cdot p_j \vep w_j\cdot u$ and $v_{i +
      1}p_j = v_i$.
  \end{enumerate}
  If (1) holds, then $(v_{i + 1})\phi = (v_i) \phi \cdot (p_j) \phi$.
  If (2) holds, then $(v_i) \phi = (v_{i + 1}) \phi \cdot (p_j)
  \phi$, whereby $(v_{i + 1}) \phi = (v_i) \phi \cdot ((p_j) \phi)^{-1}$.

  Applying the above argument to every $i$ such that
  $(v_i)\phi \neq (v_{i + 1})\phi$ shows that $\sigma \in
  \langle (p_1)\phi, \ldots, (p_k)\phi \rangle$ and so $\{(p_1) \phi, \ldots,
  (p_k) \phi\}$ generates $\Stab(1)$.
\end{proof}

\begin{corollary}\label{lemma-In-at-least-2-n-1}
  If $n \geq 4$, then there are at least two rank $n - 1$ relations in $S$.
\end{corollary}
\begin{proof}
  Clearly $\Stab(1) \cong S_{n - 1}$. If $n \geq 4$, then $\Stab(1)$ is not
  cyclic, and so $k$ in \cref{lemma-generating-stab-In} is at least $2$.
\end{proof}

It is possible to show that the conclusion of \cref{lemma-In-at-least-2-n-1}
holds when $n = 3$, but the proof given above does not work. We have opted not
to include the proof in the case of $n = 3$ for the sake of brevity. Next we
will show that there is at least one rank $n - 2$ relation in $S$.

\begin{lemma}\label{lemma-In-at-least-1-n-2}
  There is at least one relation of rank $n - 2$ in $S$.
\end{lemma}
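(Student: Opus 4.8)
The plan is to argue by contradiction: suppose that $S$ contains no relation of rank $n-2$. Since a relation of rank $n$ is a relation of $S_n$, hence lies in $R^{\sharp}$ and may be discarded, every relation in $S$ then has rank $n-1$ or rank at most $n-3$. I would first remove the low-rank relations: by \cref{lemma-assume-higher-ranks}, deleting any relation of rank at most $n-3$ preserves every equality that holds in $I_n$ between words of rank at least $n-2$. Writing $\hat S\subseteq S$ for the set of rank $n-1$ relations and letting $M^{*}$ be the monoid defined by $\presn{A,\vep}{R,\hat S}$, it follows that the canonical surjection $\theta\colon M^{*}\to I_n$ is injective on elements of rank at least $n-2$; equivalently, every equality $w\phi=w'\phi$ of $I_n$ with $w,w'$ of rank $n-2$ is already a consequence of $R\cup\hat S$.

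The contradiction will come from exhibiting a single true rank $n-2$ identity of $I_n$ that is \emph{not} a consequence of $R\cup\hat S$. To certify non-consequence I would construct a monoid $N$ and a homomorphism $\beta\colon(A\cup\{\vep\})^{*}\to N$ such that (i) $\beta$ satisfies all of $R$ and all of the rank $n-1$ relations $\hat S$, but (ii) $\beta$ fails the chosen rank $n-2$ identity. The existence of such a $\beta$ shows the identity is not in $(R\cup\hat S)^{\sharp}$, contradicting the previous paragraph, and hence no such $S$ can exist without a rank $n-2$ relation.

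The monoid $N$ I have in mind is an inflation of $I_n$ along its rank $n-2$ $\mathcal J$-class. The principal factor of that class is a Brandt semigroup $\mathcal M^{0}(S_{n-2};\Lambda,\Lambda;P)$ with structure group $S_{n-2}$, which is nontrivial precisely because $n\geq 4$. I would replace this group by a proper cover $\pi\colon G\to S_{n-2}$ with nontrivial kernel, for instance $G=S_{n-2}\times C$ for a suitable abelian $C$ on which $\Stab(1)\cong S_{n-1}$ acts trivially, and define $N$ to coincide with $I_n$ on all elements of rank at least $n-1$ while its rank $n-2$ part is built from $\mathcal M^{0}(G;\Lambda,\Lambda;P)$ (collapsing ranks below $n-2$ to a zero). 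Setting $\beta(a)=a\phi$ for $a\in A$ and $\beta(\vep)$ equal to the lift of $\vep$ gives a map agreeing with the defining map of $I_n$ on rank $\geq n-1$, so $R$ holds automatically, and each relation $p\vep w=\vep w'$ of $\hat S$ holds because both sides have rank $n-1$ and lie in the unaltered part of $N$. On the other hand, a true rank $n-2$ identity such as \ref{rel-I7} forces an equation in the structure group that is trivial in $S_{n-2}$ but nontrivial in $G$, giving (ii).

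The main obstacle is the construction and verification of $N$: I must show that the action of $S_n$ and of $\vep$ on the inflated class lifts consistently to $G$, so that $N$ is genuinely an associative monoid and $\beta$ satisfies \emph{every} rank $n-1$ relation. Because $\hat S$ comes from an arbitrary presentation I cannot inspect its relations individually, so I must instead guarantee that $N$ and $I_n$ are indistinguishable on ranks $\geq n-1$; checking the cocycle/compatibility conditions that make the $S_{n-1}$-action on $\ker\pi$ consistent with the sandwich matrix $P$ is where the real work lies. An appealing alternative that sidesteps building $N$ explicitly is to define the lift directly as a $G$-valued invariant of rank $n-2$ words and to verify, exactly as in the proof of \cref{lemma-generating-stab-In}, that it is unchanged by every $R$- and $\hat S$-transition in an elementary sequence yet separates the two sides of the chosen identity.
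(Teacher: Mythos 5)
Your reduction steps are fine: rank~$n$ relations lie in $R^{\sharp}$ and can be discarded, and \cref{lemma-assume-higher-ranks} (equivalently, the fact from \cref{lemma-higher-ranks} that an elementary sequence between rank $n-2$ words can only use relations of rank $\geq n-2$) lets you ignore relations of rank at most $n-3$. But the heart of your proof is missing. Everything hinges on producing the object that certifies non-provability of a true rank $n-2$ identity from $R\cup\hat S$, and you never produce it: the inflated monoid $N$ is not constructed, its multiplication is not defined, associativity is not checked, and the claim that \ref{rel-I7} ``forces an equation in the structure group that is trivial in $S_{n-2}$ but nontrivial in $G$'' is asserted, not proved --- indeed it is false for some choices of lift (e.g.\ the trivial cover), so it cannot follow merely from the shape of $N$. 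You acknowledge that ``checking the cocycle/compatibility conditions \ldots is where the real work lies''; that work \emph{is} the lemma, so as it stands this is a plan, not a proof. There is also a structural reason to doubt your specific candidate: the obstruction that actually exists here is not a central-type extension of the maximal subgroup $S_{n-2}$ with the same Brandt index sets. The invariant that genuinely survives all $R$- and rank $n-1$ transitions is the left coset of the leading permutation in $\Stab(1)$ (\cref{lemma-leading-perm-In}), i.e.\ the datum of \emph{which} of the two points outside the domain was removed first. Encoding this as a cover of the rank $n-2$ principal factor splits each $\mathcal{R}$-class in two (enlarging the row index set of the Rees--Brandt structure) while leaving the maximal subgroups untouched; it does not enlarge the structure group, and you give no evidence that a cover of your proposed form $G=S_{n-2}\times C$ exists at all.

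Note also how much lighter the paper's actual argument is: no contradiction setup and no auxiliary monoid are needed. One observes directly that $R$-transitions preserve the leading permutation, that rank $n-1$ relations preserve its coset modulo $\Stab(1)$ (by \cref{lemma-leading-perm-In}, after the normalisation of \cref{lemma-In-rank-n-1-2}), and that lower-rank relations cannot occur in the sequence; then the single element $f=\vep(1\ 2)\vep=(1\ 2)\vep(1\ 2)\vep$, whose two factorisations have leading permutations $\id$ and $(1\ 2)$ in distinct cosets, forces some transition in any elementary sequence joining these two words to use a rank $n-2$ relation. Your closing sentence --- define a word invariant preserved by every $R$- and $\hat S$-transition that separates two words representing the same rank $n-2$ element, ``exactly as in the proof of \cref{lemma-generating-stab-In}'' --- is precisely this argument, but stated in one sentence with the invariant left unspecified; had you taken the coset of the leading permutation as that invariant and carried out the verification, you would have recovered the paper's proof. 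As written, the gap is that the separating invariant (or cover) is never exhibited.
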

\begin{proof}
  Suppose that $u, v\in (A\cup \{\vep\}) ^*$ have rank $n -2$ and
  that $(u)\phi = (v)\phi$, and that the leading permutations of $u$
  and $v$ belong to
  distinct cosets of
  $\Stab(1)$. Then there exists an elementary sequence $\alpha_1 = u,
  \alpha_2, \ldots, \alpha_m = v$ with respect to $R$ and $S$. The defining
  relations $R$ for $S_n$ preserve the leading permutation of any word, and
  the rank $n - 1$ relations in $S$ preserve the coset of the leading
  permutation in $\Stab(1)$ (by \cref{lemma-leading-perm-In}). It follows
  that there exists $k\in \{1, \ldots, m-1\}$ such that the transition from
  $\alpha_k$ to $\alpha_{k + 1}$ uses a rank $n - 2$ relation $u_1 = v_1$ in
  $S$, where the leading permutations of $\alpha_k$ and $\alpha_{k + 1}$
  belong to different cosets of $\Stab(1)$.

  By the preceding paragraph, to show that $S$ contains a rank $n - 2$
  relation, it suffices to show that there exists $f\in I_n$ such
  that $f$ has
  rank $n - 2$ and $f=(u)\phi=(v)\phi$ for some $u, v\in (A\cup \{\vep\})
  ^*$ with leading
  permutations in distinct cosets of $\Stab(1)$. Such $f\in I_n$
  are numerous,
  for example, if
  \[
    f =
    \begin{pmatrix}
      1 & 2 & 3 & 4 & \cdots & n\\
      - & - & 3 & 4 & \cdots & n \\
    \end{pmatrix},
  \]
  then $f$ can be factorised as both $\vep (1, 2) \vep$ and $(1, 2) \vep (1,
  2) \vep$ with leading permutations the identity and $(1, 2)$,
  respectively.
\end{proof}

This concludes the proof of \cref{theorem-In-lower-bound}, since by
\cref{lemma-In-at-least-2-n-1} we know $S$ contains at least two
rank $n - 1$
relations, and by \cref{lemma-In-at-least-1-n-2} it contains at
least one rank
$n - 2$ relation.

\subsection{The proof of
\cref{theorem-In-main}}\label{subsection-sym-inv-upper-bound}

In this section, we prove that the presentation $\mathcal{I}$ given in
\cref{theorem-In-main} defines $I_n$ when $n \geq 3$.

Throughout this subsection, we set
\[\beta = (2, 3, \ldots, n), \quad\text{and}\quad \tau_k = (k, k + 1)\in S_n\]
for appropriate values of $k$.

It is routine to verify that the relations~\ref{rel-I6} and~\ref{rel-I7} hold
in $I_n$. Hence it suffices to show that the
relations~\ref{rel-I1},~\ref{rel-I3},~\ref{rel-I4}, and~\ref{rel-I5} from
\cref{theorem-In-five-rels} are consequences of the relations $R$, and the
relations~\ref{rel-I2},~\ref{rel-I6}, and~\ref{rel-I7}.

\begin{lemma}\label{lemma-In-transpositions}
  If $k \in \{2, \ldots, n - 1\}$ and $\tau_{k - 1}\vep \tau_{k - 1} =
  \vep$ holds in $\mathcal{I}$, then  $\tau_{k}\vep \tau_{k} =
  \vep$ holds in $\mathcal{I}$ also.
\end{lemma}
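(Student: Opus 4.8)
The plan is to prove the statement by conjugating $\vep$ by the cycle $\beta = (2\ 3\ \cdots\ n)$, exploiting relation~\ref{rel-I6} to commute $\beta$ past $\vep$. The starting point is the purely symmetric-group identity $\beta^{-1}(k - 1\ k)\beta = (k\ k + 1)$, which holds in $S_n$ for every $k\in\{3,\ldots,n - 1\}$ (here $\beta^{-1}$ denotes any word in $A^*$ representing $\beta^{n - 2}$) and is therefore a consequence of $R$. The case $k = 2$ is degenerate, since $\beta^{-1}(1\ 2)\beta = (1\ 3)\neq(2\ 3)$; I would treat it separately at the end, where the conclusion holds for an independent reason.

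Next I would record two reformulations of~\ref{rel-I6}, which reads $\beta\vep = \vep^2\beta$. Right-multiplying by a word representing $\beta^{-1}$ and using $\beta\beta^{-1} = \varepsilon$ (a consequence of $R$) gives $\beta\vep\beta^{-1} = \vep^2$, while left-multiplying gives $\vep = \beta^{-1}\vep^2\beta$. Both identities hold in $\mathcal{I}$. The core computation then rewrites the target word: substituting the conjugation identity and collecting the middle factor via $\beta\vep\beta^{-1} = \vep^2$,
\[
(k\ k + 1)\,\vep\,(k\ k + 1) = \beta^{-1}(k - 1\ k)\,\beta\vep\beta^{-1}\,(k - 1\ k)\beta = \beta^{-1}\,(k - 1\ k)\,\vep^2\,(k - 1\ k)\,\beta.
\]
Inserting $(k - 1\ k)(k - 1\ k) = \varepsilon$ between the two copies of $\vep$ and applying the hypothesis $(k - 1\ k)\vep(k - 1\ k) = \vep$ twice shows that $(k - 1\ k)\vep^2(k - 1\ k) = \vep^2$, so the right-hand side collapses to $\beta^{-1}\vep^2\beta$, which equals $\vep$ by the second reformulation of~\ref{rel-I6}. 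This yields $(k\ k + 1)\vep(k\ k + 1) = \vep$ in $\mathcal{I}$ for all $k\in\{3,\ldots,n - 1\}$.

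For the boundary case $k = 2$, the conclusion $(2\ 3)\vep(2\ 3) = \vep$ follows directly from~\ref{rel-I2} together with $(2\ 3)(2\ 3) = \varepsilon$, without any use of the hypothesis, so the implication holds trivially there. The one point demanding care, and what I expect to be the main obstacle, is the bookkeeping between $\vep$ and $\vep^2$: the idempotency relation $\vep^2 = \vep$ (namely~\ref{rel-I1}) is \emph{not} among the relations of $\mathcal{I}$ and must not be invoked here, so the derivation has to be arranged so that the stray factor $\vep^2$ introduced by~\ref{rel-I6} on one side is consumed exactly by the factor $\vep^2$ absorbed by~\ref{rel-I6} on the other side. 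Once this cancellation is set up correctly, the remaining manipulations are routine applications of $R$ (the relations $(k - 1\ k)^2 = \varepsilon$ and $\beta\beta^{-1} = \varepsilon$) and the hypothesis.
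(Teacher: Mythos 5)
Your proof is correct and is essentially the paper's own argument: both rest on the same three ingredients --- the rearrangement $\beta\vep\beta^{-1}=\vep^2$ of \ref{rel-I6}, the squared hypothesis $(k-1\ k)\vep^2(k-1\ k)=\vep^2$, and the $S_n$-identity $\beta^{-1}(k-1\ k)\beta=(k\ k+1)$ --- differing only in that you rewrite $(k\ k+1)\vep(k\ k+1)$ directly while the paper derives the same cancellation by rearranging an intermediate equation. Your separate treatment of $k=2$ is in fact slightly more careful than the paper, whose conjugation identity fails there (since $\beta^{-1}(1\ 2)\beta=(1\ 3)$); this is harmless because that instance of the lemma holds unconditionally by \ref{rel-I2}, and it is never invoked in \cref{lemma-In-conj}, which starts from $(2\ 3)\vep(2\ 3)=\vep$ and applies the lemma only for $k\geq 3$.
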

\begin{proof}
  With some straightforward rearrangement the relation~\ref{rel-I6} becomes
  \begin{align}\label{eq-B1-2}
    \beta \vep \beta^{-1} &= \vep^2.
  \end{align}
  By the assumption in the statement,
  \begin{equation}\label{eq-B1-3}
    \tau_{k-1} \vep^2 \tau_{k-1} = \tau_{k-1} \vep \tau_{k-1} \cdot \tau_{k-1}
    \vep \tau_{k-1}
    = \vep^2.
  \end{equation}

  Combining~\eqref{eq-B1-2} and~\eqref{eq-B1-3} yields $\tau_{k-1} \vep^2
  \tau_{k-1} = \beta \vep \beta^{-1}$, which can be rearranged to
  \begin{equation}\label{eq-B1-4}
    \vep^2 = \tau_{k-1} \beta \vep \beta^{-1} \tau_{k-1}.
  \end{equation}
  This implies
  \[
    \beta \vep \beta^{-1} \stackrel{\eqref{eq-B1-2}}{=} \vep^2
    \stackrel{\eqref{eq-B1-4}}{=} \tau_{k-1} \beta \vep \beta^{-1} \tau_{k-1}.
  \]
  Rearranging the previous equation gives $\left(\beta^{-1} \tau_{k-1}
  \beta\right) \vep \left(\beta^{-1} \tau_{k-1} \beta\right) = \vep$ and
  so, since $\beta^{-1} \tau_{k-1} \beta = \tau_{k}$ holds in $S_n$,
  $\tau_{k} \vep \tau_{k} = \vep$, as required.
\end{proof}

\begin{corollary}\label{lemma-In-conj}
  If $\sigma \in A^*$ represents an element of $\Stab(1)$, then $\sigma^{-1}
  \vep \sigma = \vep$ holds in $\mathcal{I}$.
\end{corollary}
\begin{proof}
  Rearranging the relation~\ref{rel-I2} yields
  $\tau_2 \vep \tau_2 = \vep$.
  Hence, by repeatedly applying \cref{lemma-In-transpositions},
  \begin{equation}\label{eq-B1-6}
    \tau_{k} \vep \tau_{k} = \vep,
  \end{equation}
  holds in $\mathcal{I}$ for all $k\in\{2, 3, \ldots, n - 1\}$. Since
  $\{\tau_{k} \ \mid \ 2 \leq k \leq n - 1\}$ is a generating set for
  $\Stab(1)$, the result follows.
\end{proof}

We can now show that the relations~\ref{rel-I1},~\ref{rel-I3},~\ref{rel-I4},
and~\ref{rel-I5} hold in $\mathcal{I}$:
\begin{enumerate}[label=(B\arabic*)]
  \item[(\ref{rel-I1})]
    \cref{lemma-In-conj} implies $\beta \vep \beta^{-1} = \vep$
    and it follows
    by~\ref{rel-I6} that $\beta \vep \beta^{-1} = \vep^2$. Hence the
    relation $\vep
    ^ 2 = \vep$ holds in $\mathcal{I}$, which is precisely~\ref{rel-I1}.
  \item[(\ref{rel-I3})] $\vep\beta = \beta\vep$ holds in
    $\mathcal{I}$ by~\ref{rel-I6}
    and~\ref{rel-I1}.
  \item[(\ref{rel-I4})]
    $\vep \tau_1 \vep \tau_1 = \vep \tau_1 \vep$ holds in
    $\mathcal{I}$ since
    \begin{align*}
      \vep \tau_1 \vep \cdot \tau_1
      &= \tau_1 \vep \tau_1 \vep \tau_1 \vep \tau_1
      \cdot \tau_1 && \text{by~\ref{rel-I7}}\\
      &=  \tau_1 \vep \tau_1 \cdot \vep \tau_1 \vep\\
      & = \tau_1 \vep \tau_1 \cdot \tau_1
      \vep \tau_1 \vep \tau_1 \vep \tau_1 && \text{by~\ref{rel-I7}}\\
      &= \tau_1 \vep^2 \tau_1 \vep \tau_1 \vep \tau_1\\
      & = \tau_1 \vep \tau_1 \vep \tau_1
      \vep \tau_1 && \text{by~\ref{rel-I1}} \\
      & = \vep \tau_1 \vep && \text{by~\ref{rel-I7}}.
    \end{align*}
  \item[(\ref{rel-I5})]
    $\tau_1 \vep \tau_1\vep = \vep \tau_1 \vep$ holds in
    $\mathcal{I}$ by a
    similar argument to the previous case.
\end{enumerate}

\subsection{The $n = 1$, $2$, $3$ cases}\label{section-In-leq-3}
In this section we consider the ``corner cases'' of presentations for $I_n$
when $n = 1$, $2$ and $3$, since these are not covered by
\cref{theorem-In-main}. \GAP and Python code for performing the computations
mentioned in this section can be found in~\cite{libsemigroupsShortCaseStudy}.

When $n = 1$, it is routine to verify that the monoid presentation
$\presn{x}{x^2=x}$ defines $I_1$ and has the least possible
number of relations.

For $n = 2$, it can be verified using, for example, the
\Semigroups package for
\GAP, that the presentation
\[
  \presn{x, \vep}{x^2=1,\quad \vep^2 = \vep,\quad (x \vep)^3 x =
  \vep x \vep}
\]
defines $I_2$ (where $x$ represents the transposition $(1, 2)$). Hence, by
\cref{lemma-computational-justification}, if $\presn{A}{R}$ is
any presentation
for $S_2$, then the presentation
\[
  \presn{A, \vep}{R, \quad\vep^2 = \vep, \quad((1, 2) \vep)^3 (1, 2)
  = \vep (1, 2) \vep}
\]
defines $I_2$.
It can be shown that any presentation for $I_n$ for any $n\geq 2$ over the
generating set $A\cup \{\vep\}$ contains at least one rank $n - 1$ relation
where the number of occurrences of $\vep$ is not preserved. Similarly, it is
possible to show that any presentation for $I_n$ with $n \geq 2$ must contain a
relation which does not preserve the left coset of $\Stab(1)$ of the leading
permutation. It is possible use the preceding two sentences to show that the
number of non-$S_n$ relations in any presentation of $I_2$ is at least $2$, and
so the presentation given above has the minimum number of non-$S_n$ relations.

\cref{theorem-In-main} provides a presentation for $I_3$ with $3$ non-$S_n$
relations. However, the argument in \cref{section-In-lower-bound} does not
apply to $I_3$. In particular, \cref{lemma-In-at-least-2-n-1} does not hold
when $n = 3$ because $\Stab(1)$ is cyclic (of order $2$).  However, it is
possible to prove, using a different argument to that given in
\cref{section-In-lower-bound}, that any presentation for $I_n$, $n \geq 3$
requires at least two rank $n-1$ relations. This follows since such a
presentation requires at least one relation satisfying:
\begin{itemize}
  \item two occurrences of $\vep$ on one side of the relation are
    replaced by one occurrence on the other; and
  \item the left coset of the leading permutation is not invariant.
\end{itemize}
It is possible to prove that a relation satisfying both of these conditions
cannot be the only rank $n - 1$ relation in a presentation defining $I_n$. In
particular, the minimum number of non-$S_n$ relations in any presentation for
$I_3$ is $3$.

\section{The full transformation monoid}\label{section-Tn}

Throughout this section we will denote the presentation in
\cref{theorem-full-transf-main} by $\T_{odd}$ when $n\geq 5$ is odd and
$\T_{even}$ when $n\geq 8$ is even.

In this section we will prove \cref{theorem-full-transf-main}. We do this in
two steps: we show that for all $n\geq 4$, every presentation for $T_n$
requires at least $4$ non-$S_n$ relations (\cref{subsection-Tn-lower-bound});
and we show that the presentations $\T_{odd}$ and $\T_{even}$ define $T_n$ for
the given values of $n$. For the second step, we first show that $T_n$, $n\geq
4$ is defined by an alternative presentation $\T$ with $5$ non-$S_n$ relations
(\cref{subsection-Tn-upper-bound}). We then show that $\T_{odd}$ and
$\T_{even}$ define $T_n$ by showing that the relations in $\T$ hold
(\cref{subsection-proof-full-transf}). Finally,  we consider the cases not
covered by the presentation in \cref{theorem-full-transf-main} when $n = 1$,
$2$, $3$, $4$, and $6$ (\cref{section-Tn-leq-4}).

\subsection{A lower bound of $4$}\label{subsection-Tn-lower-bound}

Throughout this section we suppose that $n \geq 4$. In this
section we will show
that if $\presn{A}{R}$ is any monoid presentation for the symmetric
group $S_n$ and $\mathcal{P} = \presn{A, B}{R, S}$ is a monoid
presentation for $T_n$, then $|S| \geq 4$.

Similar to~\cref{section-In-lower-bound}, we may suppose without loss
of generality that $B = \{\ve\}$ where $\ve\in T_n$ is the
idempotent with rank $n - 1$ and $(2)\ve = 1$. We denote by $\phi: (A\cup
\{\ve\})^* \to T_n$ the natural surjective homomorphism extending the inclusion
of $A\cup \{\ve\}$ in $T_n$.

It suffices to prove the following theorem.
\begin{theorem}\label{theorem-Tn-lower-bound}
  If $n\geq 4$, $\presn{A}{R}$ is any monoid presentation for the symmetric
  group $S_n$, and $\presn{A, \ve}{R, S}$ is a monoid presentation for $T_n$,
  then $|S| \geq 4$.
\end{theorem}

We will show that $S$ must contain at least two rank $n - 1$ relations, and at
least two rank $n - 2$ relations.

The following lemma is analogous to \cref{lemma-leading-perm-In}.

\begin{lemma}\label{lemma-leading-perm}
  If $u, v\in (A\cup \{\ve\}) ^*$ are such that $(u)\phi = (v)\phi$ in
  $T_n$ and $u$ and $v$
  have rank $n - 1$, then the leading permutations of $u$ and $v$
  belong to the
  same left coset of the setwise stabiliser $\Stab(\{1, 2\}) =
  \set{f\in S_n}{\{1,
  2\}f = \{1, 2\}}$ of $\{1, 2\}$ in $S_n$.
\end{lemma}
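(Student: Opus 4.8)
The plan is to follow the proof of \cref{lemma-leading-perm-In} almost verbatim, but with the \emph{kernel} of a transformation playing the role that the \emph{domain} of a partial permutation played there. This is the natural substitution, because in $T_n$ it is the kernel, rather than the domain, that controls the rank: a rank $n-1$ transformation has a kernel with a single non-trivial block, of size $2$.

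First I would write $u = u_1 \ve u_2$ and $v = v_1 \ve v_2$, where $u_1\phi$ and $v_1\phi$ are the leading permutations of $u$ and $v$. Here $u_1, v_1 \in A^*$, since any word involving $\ve$ represents an element of rank at most $n-1$ and so cannot represent a permutation; and the leading non-permutation of each word is $\ve$, as $\ve$ is the only generator in $A\cup\{\ve\}$ representing a non-permutation. Setting $\tau = (u_1\phi)^{-1} v_1\phi \in S_n$, the hypothesis $u\phi = v\phi$ rearranges to
\[
  (\ve u_2)\phi = \tau \cdot (\ve v_2)\phi.
\]
Since $u_1\phi$ is a permutation and $u\phi$ has rank $n - 1$, both $(\ve u_2)\phi$ and $(\ve v_2)\phi$ have rank $n - 1$. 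The goal is then to prove $\tau \in \Stab(\{1,2\})$, which is exactly the assertion that $u_1\phi$ and $v_1\phi$ lie in the same left coset of $\Stab(\{1,2\})$.

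The crucial step is to pin down the unique non-trivial block of each kernel. Because $\ker(f) \subseteq \ker(fg)$ (as recorded in \cref{section-prelims}) and $\ker(\ve)$ has the single non-trivial block $\{1,2\}$, the rank $n - 1$ transformation $(\ve u_2)\phi$ has a kernel with exactly one non-trivial block, which must therefore be $\{1,2\}$; likewise for $(\ve v_2)\phi$. On the other hand, left-multiplication by $\tau$ pulls the kernel back: using the convention $(x)(\tau g) = ((x)\tau)g$, the non-trivial block of $\ker(\tau \cdot (\ve v_2)\phi)$ is $\{1,2\}\tau^{-1}$. Equating the two descriptions of $\ker((\ve u_2)\phi)$ forces $\{1,2\}\tau^{-1} = \{1,2\}$, whence $\tau \in \Stab(\{1,2\})$, as required. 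I expect the only genuine point of care to be the direction of this pullback --- that $\ker(\tau \cdot (\ve v_2)\phi)$ has non-trivial block $\{1,2\}\tau^{-1}$ rather than $\{1,2\}\tau$ --- which hinges on the left-to-right composition convention; the remaining bookkeeping, including confirming that $(\ve u_2)\phi$ really does have rank $n - 1$ so that its unique non-trivial kernel block coincides with that of $\ve$, is routine.
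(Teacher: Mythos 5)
Your proposal is correct and takes essentially the same route as the paper's own proof: the same decomposition $u = u_1\ve u_2$, $v = v_1\ve v_2$ into leading permutation, leading non-permutation $\ve$, and suffix, followed by the same identification of $\{1,2\}$ as the unique non-trivial kernel class of the rank $n-1$ transformations and the conclusion that $\tau = (u_1\phi)^{-1}v_1\phi$ must stabilise $\{1,2\}$ setwise. The only difference is one of exposition: you make explicit the pullback computation $\{1,2\}\tau^{-1} = \{1,2\}$ under the left-to-right composition convention, which the paper leaves implicit in the step asserting that the only non-trivial kernel class of $(u_1\phi)^{-1}(v_1\ve)\phi$ is $\{1,2\}$.
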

\begin{proof}
  Since $u$ and $v$ have rank $n - 1$, we may write $u = u_1\ve u_2$ and
  $v = v_1\ve v_2$  where $u_2, v_2\in (A\cup \{\ve\})^*$ and $u_1, v_1\in A^*$
  are the leading permutations of $u$ and $v$ respectively.  Hence $(\ve
  u_2)\phi = ((u_1)\phi)^{-1}(v_1\ve v_2)\phi$ is of rank $n - 1$ also. The only
  non-trivial class of the kernel of $(\ve u_2)\phi$ is $\{1, 2\}$. Hence the
  only non-trivial class of $((u_1)\phi)^{-1}(v_1\ve)\phi$ is $\{1, 2\}$ also.
  Therefore $((u_1)\phi)^{-1} \cdot (v_1)\phi\in \Stab(\{1, 2\})$, as required.
\end{proof}

By a similar argument to that given in \cref{section-In-lower-bound}, we may
assume that every relation in $S$ of rank $n - 1$ has the form $p\ve u = \ve v$
for some $p\in A ^*$ and $u, v\in (A\cup \{\ve\}) ^ *$ where $(p)\phi \in
\Stab(\{1, 2\})$.

The next lemma is analogous to \cref{lemma-generating-stab-In} and the proof is
omitted for the sake of brevity.

\begin{lemma}\label{lemma-generating-stab}
  If the rank $n-1$ relations in $S$ are $p_i\ve u_i= \ve v_i$
  for $i\in \{1,
  \ldots, k\}$ for some $k\in \N$ where $(p_i)\phi\in \Stab(\{1, 2\})$, then
  $\Stab(\{1, 2\})$ is generated by $\{(p_1)\phi, \ldots, (p_k)\phi\}$.
\end{lemma}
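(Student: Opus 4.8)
The plan is to mirror the proof of \cref{lemma-generating-stab-In}, replacing the point stabiliser $\Stab(1)$ by the setwise stabiliser $\Stab(\{1, 2\})$ and \cref{lemma-leading-perm-In} by \cref{lemma-leading-perm}. Since each $p_i\phi$ lies in $\Stab(\{1, 2\})$ by hypothesis, the containment $\langle p_1\phi, \ldots, p_k\phi\rangle \subseteq \Stab(\{1, 2\})$ is immediate, so it suffices to show the reverse: that an arbitrary $\sigma \in \Stab(\{1, 2\})$ is a product of the elements $p_1\phi, \ldots, p_k\phi$ and their inverses. As in the symmetric inverse case, I would do this by exhibiting two rank $n - 1$ words $x, y \in (A \cup \{\ve\})^*$ with $x\phi = y\phi$ whose leading permutations are $\id$ and $\sigma$ respectively, and then tracking how the leading permutation changes along an elementary sequence from $x$ to $y$.

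The one genuinely new ingredient is the construction of these endpoint words, since $\ve$ does not commute with all of $\Stab(\{1, 2\})$: indeed $\ve(1\ 2)$ and $(1\ 2)\ve$ have different images. To circumvent this, I would first record that $\sigma\ve = \ve\tau$ holds in $T_n$ for every $\sigma \in \Stab(\{1, 2\})$, where $\tau$ is the permutation fixing $1$ and $2$ and agreeing with $\sigma$ on $\{3, \ldots, n\}$. This follows by writing $\sigma = \pi\tau$, where $\pi \in \{\id, (1\ 2)\}$ and $\pi$, $\tau$ have disjoint supports (so commute): since $\tau$ fixes $1$ and $2$ it commutes with $\ve$, and since $(1\ 2)\ve = \ve$ in $T_n$ we have $\pi\ve = \ve$ in either case, whence $\sigma\ve = \pi\tau\ve = \pi\ve\tau = \ve\tau$. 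Choosing $q, r \in A^*$ with $q\phi = \sigma$ and $r\phi = \tau$, the words $x = \ve r$ and $y = q\ve$ then satisfy $x\phi = \ve\tau = \sigma\ve = y\phi$, both have rank $n - 1$, and their leading permutations are $\id$ and $\sigma$, as required.

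It then remains to run the tracking argument verbatim. Exactly as in \cref{lemma-generating-stab-In}, \cref{lemma-assume-higher-ranks} guarantees that only relations of rank $n$ (those in $R$) or rank $n - 1$ can occur along an elementary sequence from $x$ to $y$. Writing each term of the sequence as $v_j\ve u_j$ with leading permutation $v_j\phi$, every $R$-relation preserves $v_j\phi$, while a rank $n - 1$ relation --- which, as established after \cref{lemma-leading-perm}, may be taken in the form $p_i\ve u_i = \ve v_i$ with $p_i\phi \in \Stab(\{1, 2\})$ --- alters the leading permutation only when applied at the first $\ve$, and then exactly by right multiplication by $p_i\phi$ or $(p_i\phi)^{-1}$; \cref{lemma-leading-perm} ensures these are the only ways the coset, and hence the leading permutation, can move. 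Since the leading permutation starts at $\id$ and finishes at $\sigma$, we conclude that $\sigma$ is a product of the $p_i\phi$ and their inverses, so $\sigma \in \langle p_1\phi, \ldots, p_k\phi\rangle$ and therefore $\Stab(\{1, 2\}) = \langle p_1\phi, \ldots, p_k\phi\rangle$.

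I expect the main obstacle to be confined to the endpoint construction of the second paragraph: the failure of $(1\ 2)$ to commute with $\ve$ is what distinguishes $T_n$ from $I_n$ here, and once it is absorbed into the identity $\sigma\ve = \ve\tau$, the remaining case analysis is a direct transcription of the proof of \cref{lemma-generating-stab-In}, with $\Stab(1)$ and \cref{lemma-leading-perm-In} systematically replaced by $\Stab(\{1, 2\})$ and \cref{lemma-leading-perm}.
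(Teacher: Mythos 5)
Your proof is correct and takes exactly the route the paper intends: the paper omits the proof of this lemma, saying only that it is analogous to \cref{lemma-generating-stab-In}, and your argument is precisely that analogue, with the coset-tracking along an elementary sequence carried over verbatim. The one point that genuinely requires modification --- that $\ve$ and $(1\ 2)$ do not commute in $T_n$, so the endpoint words $\vep q$ and $q\vep$ of the $I_n$ proof must be replaced, which you do correctly via the identity $\sigma\ve = \ve\tau$ (writing $\sigma = \pi\tau$ with $\pi\in\{\id,(1\ 2)\}$ and $\tau$ fixing $1$ and $2$, so that $x=\ve r$ and $y = q\ve$ have leading permutations $\id$ and $\sigma$) --- is exactly the detail the paper glosses over, and your treatment of it is sound.
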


An immediate corollary of \cref{lemma-generating-stab} tells us the
minimum number of rank $n - 1$ relations in any presentation for
$T_n$ when $n \geq 4$.

\begin{corollary}\label{lemma-Tn-at-least-2-n-1}
  There are at least two rank $n -1$ relations in $S$.
\end{corollary}
\begin{proof}
  Clearly, $\Stab(\{1, 2\}) \cong S_{2} \times S_{n - 2}$. Hence, since $n \geq
  4$, $\Stab(\{1, 2\})$ is not cyclic, and hence $k$ in
  \cref{lemma-generating-stab} is at least $2$.
\end{proof}

To show that there must be at least two rank $n - 2$
relations in any presentation for $T_n$ we require the following notion.
If $f\in T_n$, then we say that $f$ has \defn{kernel type} $a_1^{b_1}\cdots
a_k^{b_k}$ where $a_1, \ldots, a_k, b_1, \ldots, b_k\in \{1, \ldots, n\}$ and
$a_1b_1+ \cdots + a_k b_k = n$ to denote that $f$ has $b_i$ kernel
classes of size $a_i$ for every $i$.

We also require the following lemma.

\begin{lemma}\label{lemma-new-1}
  Suppose that $f\in T_n$ has rank $n- 2$ and is such that $f =
  (w_1)\phi = (w_2)\phi$ for some
  $w_1, w_2\in (A\cup\{\ve\})^*$ for which the leading permutations
  of $w_1$ and $w_2$
  belong to distinct cosets of $\Stab(\{1, 2\})$. Then there exists a rank $n
  -2$ relation $u=v\in S$ such that $(u)\phi$ (and $(v)\phi$) have the same
  kernel type as $f$.
\end{lemma}
\begin{proof}
  Suppose that $f\in T_n$ is such that $f = (w_1)\phi = (w_2)\phi$ for some
  $w_1, w_2\in (A\cup\{\ve\})^*$ for which the leading permutations
  of $w_1$ and $w_2$
  belong to distinct cosets of $\Stab(\{1, 2\})$. Then there exists an
  elementary sequence $\alpha_1 = w_1, \alpha_2, \ldots, \alpha_k = w_2$ with
  respect to $R$ and $S$. Since the leading permutations of $w_1$ and $w_2$
  belong to distinct cosets of $\Stab(\{1, 2\})$, there exists $i$ such that
  the leading permutations of $\alpha_i$ and $\alpha_{i + 1}$ belong to
  distinct cosets of $\Stab(\{1, 2\})$. In particular,
  there exist $p, q\in (A\cup \{\ve\})^*$ such that $\alpha_i = puq$ and
  $\alpha_{i + 1} = pvq$ for some $u=v\in R\cup S$. If $u=v\in R$, the
  defining relations for $S_n$, then, the
  leading permutations of $\alpha_i$ and $\alpha_{i+1}$ would be identical.
  Hence $u = v\in S$.
  If $p\not\in A ^*$, then $(p)\phi\notin S_n$ and so the leading permutations
  of $\alpha_i = puq$ and $\alpha_{i + 1} = pvq$ coincide with
  the leading permutation of $p$,  contradicting the
  assumption that the leading permutations of $\alpha_i$ and $\alpha_{i+1}$
  belong to distinct cosets of $\Stab(\{1, 2\})$. Hence
  $p\in A^*$ and $(p)\phi$ is a permutation.

  By \cref{lemma-higher-ranks}, the rank of $u=v$ is not less than $n - 2$. If
  the rank of $u=v$ is $n$, then $(u)\phi = (v)\phi$ is a
  permutation and so the leading permutations of $\alpha_i = p uq$
  and $\alpha_{i + 1}= p vq$ would coincide, which is also a contradiction. If
  the rank of $u=v$ is $n - 1$, then, by \cref{lemma-leading-perm}, the
  leading permutations of $pu$ and $pv$, and hence $puq$ and $pvq$, belong to
  the same coset of $\Stab(\{1, 2\})$, which is a contradiction. Hence $u=v$
  has rank $n - 2$.

  Since $\ker(gh) \supseteq \ker(g)$ for all $g,h\in T_n$, if the ranks of $gh$
  and $g$ coincide, then $\ker(gh) = \ker(g)$. We have shown that $(u)\phi$ has
  rank $n - 2$, and by assumption $(uq)\phi$ has rank $n - 2$ also. Thus
  $\ker((uq)\phi) = \ker((u)\phi)$. Since $(p)\phi$ is a permutation,
  $(puq)\phi$ and $(uq)\phi$ have the same kernel type, and so $(\alpha_i)\phi$
  and $(u)\phi$ have the same kernel type. Therefore the kernel types of
  $(u)\phi$ and $(\alpha_j)\phi=f$ coincide for all $j$.
\end{proof}

\begin{lemma}\label{lemma-Tn-at-least-2-n-2}
  There are at least two rank $n - 2$ relations in $S$.
\end{lemma}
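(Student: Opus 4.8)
The plan is to mirror the strategy of \cref{lemma-In-at-least-1-n-2}, but to extract \emph{two} distinct rank $n-2$ relations by exploiting the fact that a rank $n-2$ element of $T_n$ has one of exactly two kernel shapes: a single class of size $3$ (call it \emph{type A}) or two classes of size $2$ (\emph{type B}). The invariant I would track is the left coset in $\Stab(\{1,2\})$ of the leading permutation of a word. First I would record that this coset is preserved by every relation of rank at least $n-1$: the relations $R$ fix the leading permutation outright, and after normalising the rank $n-1$ relations to the form $p\ve u=\ve v$ with $p\phi\in\Stab(\{1,2\})$ (via the analogues of \cref{lemma-In-rank-n-1-1,lemma-In-rank-n-1-2} noted before \cref{lemma-generating-stab}), any application of such a relation changes the leading permutation only by left-multiplication by an element of $\Stab(\{1,2\})$, and hence fixes its coset.

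The crucial step is a ``type-matching'' lemma: whenever a rank $n-2$ relation $(u_0,v_0)\in S$ is applied inside a word $\alpha$ representing a fixed rank $n-2$ element $f$ and this application changes the $\Stab(\{1,2\})$-coset of the leading permutation, the element $u_0\phi$ has the same kernel type as $f$. To prove this I would first observe that, along any elementary sequence joining two factorisations of $f$, every word represents $f$ and hence has rank exactly $n-2$; consequently no relation of rank $<n-2$ can occur (it would force rank $<n-2$), so only relations of rank $\geq n-1$ and rank $n-2$ appear. A coset change forces the substitution to meet the maximal permutation prefix, so the word factorises as $\alpha = x\,u_0\,y$ with $x\phi\in S_n$; writing $f = x\phi\cdot u_0\phi\cdot y\phi$ and comparing ranks shows that $y\phi$ is injective on $\im(u_0\phi)$ and that $\ker(f) = x\phi^{-1}(\ker(u_0\phi))$. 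Thus $\ker(u_0\phi)$ is a relabelling of $\ker(f)$ and in particular has the same type.

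With this in hand I would exhibit one obstruction of each type, in the explicit style of \cref{lemma-In-at-least-1-n-2}. For type A, let $f_A$ be the transformation collapsing $\{1,2,3\}$ to $1$ and fixing everything else; it has the two factorisations $\ve(2\ 3)\ve$ and $(1\ 3)\ve(2\ 3)\ve$, whose leading permutations $\id$ and $(1\ 3)$ lie in distinct cosets of $\Stab(\{1,2\})$ since $(1\ 3)\{1,2\}=\{2,3\}$. For type B, let $f_B$ collapse $\{1,2\}$ to $1$ and $\{3,4\}$ to $3$; with $\sigma=(1\ 3)(2\ 4)$ it has factorisations with respective leading permutations $\id$ and $\sigma$ (the first of the form $\ve\sigma\ve\sigma^{-1}$, beginning with $\ve$, the second beginning with $\sigma$), again in distinct cosets as $\sigma\{1,2\}=\{3,4\}$. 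Since the defining coset must change along any elementary sequence between the two factorisations of $f_A$, and only a rank $n-2$ relation of type A can effect such a change, $S$ contains a rank $n-2$ relation of type A; the same argument applied to $f_B$ produces one of type B. These are necessarily distinct, so $S$ has at least two rank $n-2$ relations, and combined with \cref{lemma-Tn-at-least-2-n-1} this completes the proof of \cref{theorem-Tn-lower-bound} with $|S|\geq 4$.

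The main obstacle will be the type-matching lemma. It is essential that a coset change can only be produced by a relation applied across the leading-permutation prefix, and the rank bookkeeping pinning $\ker(u_0\phi)$ to a relabelling of $\ker(f)$ must be carried out carefully, since a priori one might fear that a single relation of one type could be leveraged, through a rank-reducing context, to resolve the obstruction of the other type. The observation that every word in the sequence represents the \emph{same} rank $n-2$ element $f$ is exactly what rules this out, and is the point on which the whole argument turns.
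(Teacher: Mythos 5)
Your proposal is correct and takes essentially the same approach as the paper: the same invariance argument (relations in $R$ preserve the leading permutation, rank $n-1$ relations preserve its left coset in $\Stab(\{1,2\})$), the same kernel-type bookkeeping to force two distinct rank $n-2$ relations (your ``type-matching lemma'' is exactly what the paper compresses into its appeal to \cref{lemma-rank-n-2-rel-determines-kernel}), and the same pair of witnesses with kernel types $3^11^{n-3}$ and $2^21^{n-4}$. Incidentally, your type-A factorisations $\ve(2\ 3)\ve$ and $(1\ 3)\ve(2\ 3)\ve$ are correct, whereas the paper's stated products $\ve(1\ 3)\ve(1\ 3)$ and $(1\ 3)\ve(1\ 3)\ve$ are in fact rank $n-1$ words (the first equals $\ve$ by \ref{rel-T1}), so your version repairs a slip in the paper's proof.
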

\begin{proof}
  The possible kernel types of rank $n - 2$ transformations in $T_n$ are
  $3^{1}1^{n-3}$ and $2^21^{n -4}$. Hence to show that there are at least two
  rank $n- 2$ relations, it suffices by \cref{lemma-new-1} to show that for
  each kernel type $3^{1}1^{n-3}$ and $2^21^{n -4}$ there exists a
  transformation $f$ with that kernel type, and words
  $w_1, w_2\in (A\cup\{\ve\})^*$ such
  that $f = (w_1)\phi = (w_2)\phi$ where the leading permutations of $w_1$ and
  $w_2$ belong to distinct cosets of $\Stab(\{1, 2\})$.

  \begin{enumerate}[label=\bf Case \arabic*., leftmargin=0pt, labelwidth=!,
      itemsep=1em, ref=Case \arabic*]
    \item
      Suppose that $w_1 = \ve (2, 3) \ve$, $w_2 = (2, 3) \ve (2, 3) \ve$, and
      \[
        f = (w_1)\phi = (w_2)\phi =
        \begin{pmatrix}
          1 & 2 & 3 & 4 & \cdots & n\\
          1 & 1 & 1 & 4 & \cdots & n \\
        \end{pmatrix}.
      \]
      Then $f$ has kernel type $3^11^{n-3}$ and the leading permutations of
      $w_1$ and $w_2$ are the identity and $(2, 3)$ respectively. Hence the
      leading permutations belong to distinct cosets of $\Stab(\{1, 2\})$, as
      required.

    \item
      Suppose that $w_1 = (1, 3) (2, 4)\ve (1, 3)(2, 4) \ve$,
      $w_2 = \ve (1, 3) (2, 4)\ve (1, 3)(2, 4)$, and
      \[
        f =
        \begin{pmatrix}
          1 & 2 & 3 & 4 & 5 & \cdots & n\\
          1 & 1 & 3 & 3 & 5 & \cdots & n \\
        \end{pmatrix}.
      \]
      Then $f = (w_1)\phi = (w_2)\phi$ has kernel type $2^21^{n - 4}$
      and the leading permutations of
      $w_1$ and $w_2$ belong to distinct cosets of $\Stab(\{1, 2\})$.\qedhere
  \end{enumerate}
\end{proof}

This concludes the proof of \cref{theorem-Tn-lower-bound} since
the set $S$ must contain at least two relations of rank $n - 1$ by
\cref{lemma-Tn-at-least-2-n-1} and at least two relations of rank $n - 2$
by \cref{lemma-Tn-at-least-2-n-2}.

\subsection{A presentation with $5$ non-$S_n$
relations}\label{subsection-Tn-upper-bound}

The purpose of this section is to prove the following theorem.
\begin{theorem}\label{thm-full-transf-5-extra-rels}
  Suppose that $n\in \N$, that $n\geq 5$, and $\presn{A}{R}$ is any
  monoid presentation for the symmetric group $S_n$ of degree $n$.
  Then the following presentation $\mathcal{T}$ defines $T_n$:
  the generators are $A\cup \{\ve\}$, and the relations are $R$
  together with:
  \begin{multicols}{2}
    \begin{enumerate}[label=\rm (T\arabic*), ref=\rm T\arabic*]
        \addtocounter{enumi}{8}
      \item [\rm (\ref{rel-T1})]
        $\ve  (1, 3) \ve  (1, 3) = \ve$;
      \item [\rm (\ref{rel-T3})]
        $(3, 4) \ve = \ve  (3, 4)$;
      \item [\rm (\ref{rel-T7})]
        $(1, n)(2, 3) \ve  (1, n)(2, 3) \ve = \ve (1, n)(2, 3)\ve
        (1, n)(2, 3)$;
      \item[\rm (\ref{rel-T8})] $(2, 3)  \ve  (2, 3)  \ve(2, 3)  \ve(2, 3) =
        \ve  (2, 3)   \ve$.
      \item\label{rel-T9}
        $(3, 4, \ldots, n) (1, 2)
        \ve = \ve  (3, 4, \ldots, n)$;
      \item[\vspace{\fill}]
    \end{enumerate}
  \end{multicols}
  \noindent using the notation of \cref{theorem-full-transf-main}.
\end{theorem}
\begin{proof}
  By \cref{theorem-aizenstat}, the relations~\ref{rel-T1},~\ref{rel-T3},
  and~\ref{rel-T7} hold in $T_n$. It is routine to verify that the
  relations~\ref{rel-T8} and~\ref{rel-T9} also hold in $T_n$.
  Thus it suffices
  to show that the remaining
  relations~\ref{rel-T2},~\ref{rel-T4},~\ref{rel-T6},
  and~\ref{rel-T5} in Aizenstat's presentation
  (\cref{theorem-aizenstat}) are consequences of the relations given in
  \cref{thm-full-transf-5-extra-rels}.

  We set $\theta = (3, 4,\ldots, n) \in S_n$.

  \begin{enumerate}[label=\bf (A\arabic*)]
    \item[(\ref{rel-T2})]\label{A3_E1__A2}
      We must show that $(1, 2) \ve = \ve$ holds in
      $\mathcal{T}$.
      If $k \geq 1$, then, by repeated application of~\ref{rel-T9},
      \[
        \theta ^k(1, 2)^k\ve = \ve \theta ^ k
      \]
      for all $k\geq 1$.
      If $n$ is odd, then we set $k = n - 2$. In this case $\theta ^k = \id$
      and $\theta ^ k (1, 2)^ k = (1, 2)$ and so $(1, 2) \ve =
      \ve$ holds in $\mathcal{T}$.

      If $\psi = \theta (3, 4) = (4, 5, \ldots, n)$, then
      \[
        (1, 2) \psi \ve = (1, 2)\theta(3, 4) \ve
        \stackrel{\ref{rel-T3}}{=} (1, 2)\theta \ve (3, 4)
        \stackrel{\ref{rel-T9}}{=}  \ve \theta (3, 4)
        = \ve \psi.
      \]
      Repeatedly applying the equality $(1, 2)\psi\ve = \ve\psi$, we obtain
      \[
        \psi^k(1, 2)^k \ve = \ve \psi ^k
      \]
      for all $k \geq 1$.
      If $n$ is even, then we set $k = n - 3$. In this case $\psi
      ^ {k} = \id$
      and $\psi^k(1, 2)^k = (1, 2)$ and so $(1, 2)\ve = \ve$ holds in
      $\mathcal{T}$.

      Note that we have just shown that the symmetric group relations
      $R$,~\ref{rel-T3}, and~\ref{rel-T9} imply that~\ref{rel-T2}
      holds; this will
      be useful in \cref{subsection-PTn-upper-bound}.

    \item[(\ref{rel-T4})]
      $\ve (3, 4, \ldots, n) = (3, 4, \ldots, n) \ve$ holds in
      $\mathcal{T}$ by~\ref{rel-T2} and~\ref{rel-T9}.

      Again note that~\ref{rel-T2} and~\ref{rel-T9} imply that~\ref{rel-T4}
      holds, which we will use in \cref{subsection-PTn-upper-bound}.
  \end{enumerate}

  At this point we have shown that the relations~\ref{rel-T1}
  to~\ref{rel-T4} hold in $\mathcal{T}$. Since these are the only
  rank $n - 1$
  relations in the presentation in \cref{theorem-aizenstat}, it
  follows that $\ve ^ 2 = \ve$ holds in
  $\mathcal{T}$ by \cref{lemma-higher-ranks}.

  \begin{enumerate}[label=\bf (A\arabic*)]
      \addtocounter{enumi}{4}
    \item[(\ref{rel-T6})]
      $\ve (2, 3) \ve (2, 3) = \ve (2, 3) \ve$ holds in $\mathcal{T}$ since
      \begin{align*}
        \ve (2, 3) \ve \cdot (2, 3) & =
        (2, 3) \ve  (2, 3) \ve  (2, 3) \ve (2, 3)(2, 3) &&
        \text{by~\ref{rel-T8}}\\
        &=  (2, 3)\ve  (2, 3) \cdot \ve  (2, 3) \ve\\
        & = (2, 3)\ve (2, 3) \cdot (2, 3) \ve
        (2, 3) \ve  (2, 3) \ve (2, 3) &&\text{by~\ref{rel-T8}}\\
        &= (2, 3)\ve \ve  (2, 3) \ve  (2, 3) \ve (2, 3)  \\
        &= (2, 3)\ve  (2, 3) \ve  (2, 3) \ve (2, 3)
        && \text{by \cref{lemma-higher-ranks}} \\
        &= \ve (2, 3) \ve &&\text{by~\ref{rel-T8}}.
      \end{align*}
    \item[(\ref{rel-T5})]
      That $(2, 3) \ve (2, 3)\ve = \ve (2, 3) \ve$ holds in
      $\mathcal{T}$ holds
      follows by a symmetric argument to that given above
      for~\ref{rel-T6}.\qedhere
  \end{enumerate}
\end{proof}

It is possible to show that the relations in the presentation in
\cref{thm-full-transf-5-extra-rels} are irredundant, but this is not
particularly relevant here, and so we omit the details.

\subsection{The proof of
\cref{theorem-full-transf-main}}\label{subsection-proof-full-transf}

In this section we show that the presentations $\mathcal{T}_{odd}$ and
$\mathcal{T}_{even}$ given in
\cref{theorem-full-transf-main} define $T_n$ when $n \geq 7$.

We do this, in both cases, by showing that the 5
relations~\ref{rel-T1},~\ref{rel-T3},~\ref{rel-T7},~\ref{rel-T8},~\ref{rel-T9}
in the
presentation $\T$
from \cref{thm-full-transf-5-extra-rels} hold. Since~\ref{rel-T7}
and~\ref{rel-T8} are relations in all three of the presentations $\T_{odd}$,
$\T_{even}$ and $\T$, it suffices to show
that~\ref{rel-T1},~\ref{rel-T3}, and~\ref{rel-T9} are consequences of
the relations in $\T_{odd}$ and $\T_{even}$.

The key step in the proof of \cref{theorem-full-transf-main} is
the following
lemma.

\begin{lemma}\label{lemma-conj-by-even}
  If $\sigma \in A ^*$ represents an (even) permutation in
  $A_{\{3, 4, \ldots, n\}}$, then $\sigma^{-1}\ve \sigma = \ve$ holds in
  $\T_{odd}$ and $\T_{even}$.
\end{lemma}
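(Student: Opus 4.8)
The plan is to study the set
\[
  H = \set{\sigma \in S_n}{\sigma^{-1}\ve\sigma = \ve \text{ holds in } \mathcal{T}'}.
\]
Since $\presn{A}{R}$ defines $S_n$, any two words representing the same permutation are equal in $\mathcal{T}'$, so whether $\sigma^{-1}\ve\sigma = \ve$ holds depends only on the permutation $\sigma$; thus $H$ is a well-defined subset of $S_n$, and a one-line computation ($(\sigma\tau)^{-1}\ve(\sigma\tau) = \tau^{-1}\ve\tau = \ve$, and similarly for inverses) shows that $H$ is a subgroup. It therefore suffices to exhibit a generating set of $A_{\{3, 4, \ldots, n\}}$ inside $H$. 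I would use that $\alpha$ together with the $3$-cycle $(3\ 4\ 5)$ generate $A_{\{3, 4, \ldots, n\}}$: conjugating $(3\ 4\ 5)$ by the powers of $\alpha$ yields every consecutive $3$-cycle $(k\ k+1\ k+2)$ with $3 \le k \le n - 2$, and these generate the alternating group (both $\alpha$ and $(3\ 4\ 5)$ are even, since $n$ is odd). As $H$ is closed under conjugation by its own elements, once $\alpha, (3\ 4\ 5) \in H$ we obtain all of $A_{\{3, 4, \ldots, n\}} \subseteq H$, so the lemma reduces to the two memberships $\alpha \in H$ and $(3\ 4\ 5) \in H$.

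These two memberships are in fact equivalent. Iterating \eqref{eq-A1-1} twice and simplifying the resulting permutation words using $R$ gives the closed form $\alpha^{-2}\ve\alpha^2 = (3\ 4\ 5)\ve(3\ 4\ 5)^{-1}$, so $\alpha^2 \in H$ if and only if $(3\ 4\ 5) \in H$. Moreover $n - 2$ is odd, so $\gcd(2, n - 2) = 1$ and $\alpha \in \langle \alpha^2 \rangle$, whence $\alpha \in H$ if and only if $\alpha^2 \in H$. Thus $\alpha \in H$, $\alpha^2 \in H$, and $(3\ 4\ 5) \in H$ are all equivalent, and the entire lemma comes down to establishing any one of them.

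To break this circle I would track the stray transposition $(1\ 2)$ that \eqref{eq-A1-1} attaches to each conjugation by $\alpha$. Iterating \eqref{eq-A1-1} should produce a formula of the shape $\alpha^{-k}\ve\alpha^k = (1\ 2)^{k}\, c_k\, \ve\, c_k^{-1}$, where $c_k$ is an explicit $(k + 1)$-cycle on $\{3, \ldots, k + 3\}$ and the exponent of $(1\ 2)$ is $k \bmod 2$; the oddness of $n$ is precisely what governs this parity, and so records whether the conjugating permutation is even or odd. Deriving the companion relation $\alpha\ve\alpha^{-1} = (1\ 2)(3\ n)\ve(3\ n)$, comparing the forward and backward iterates by means of $\alpha^{n - 2} = \id$, and cancelling the accumulated factors of $(1\ 2)$ should then force $\alpha^{-1}\ve\alpha = \ve$, that is, $\alpha \in H$.

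The hard part will be this final cancellation. The permutations produced by iterating \eqref{eq-A1-1} all fix the point $3$, so \eqref{eq-A1-1} on its own cannot conjugate $\ve$ by a permutation that displaces $3$; isolating the single $(1\ 2)$ introduced by $\alpha$ — equivalently, proving relation~\ref{rel-T4} — appears to require feeding in the remaining defining relations~\ref{rel-T7} and~\ref{rel-T9} together with the cyclic identity $\alpha^{n - 2} = \id$ with $n - 2$ odd. I also expect the iteration to degenerate in the smallest odd degree (namely $n = 5$, where $A_{\{3, 4, 5\}}$ is cyclic), so that case would be checked directly, as is done elsewhere in the paper.
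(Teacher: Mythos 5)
Your reduction is correct as far as it goes, and it is a cleaner packaging of the idea underlying the paper's proof: $H$ is a well-defined subgroup, the closed form $\alpha^{-2}\ve\alpha^{2} = (3\ 4\ 5)\ve(3\ 4\ 5)^{-1}$ does follow from two applications of \eqref{eq-A1-1}, and since $n-2$ is odd the three memberships $\alpha\in H$, $\alpha^{2}\in H$, $(3\ 4\ 5)\in H$ are indeed equivalent, any one of them yielding the lemma. The genuine gap is that you never establish the base case, and the base case \emph{is} the entire content of the lemma: everything you prove is conditional on it, as you concede (``to break this circle\dots''), and what follows that concession is a plan, not a proof (``should then force'', ``appears to require'').

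Moreover, the plan cannot work as described. Both \eqref{eq-A1-1} and its companion $\alpha\ve\alpha^{-1} = (1\ 2)(3\ n)\ve(3\ n)$ (the paper's \eqref{eq-A1-4}) have exactly one $\ve$ on each side, so on a word containing a single $\ve$ they act only on the pair (leading permutation, trailing permutation); as such moves they are mutually inverse, each replacing $(\sigma,\tau)$ by $(\sigma g^{\pm 1}, h^{\pm 1}\tau)$ for the single element $g = \alpha(1\ 2)(3\ 4) = (1\ 2)(4\ 5\ \cdots\ n)$, $h = (3\ 4)\alpha^{-1}$. Hence every word reachable from $\alpha\ve$ using \eqref{eq-A1-1}, \eqref{eq-A1-4} and $R$ has leading permutation $\alpha g^{k}$; since every power of $g$ fixes the point $3$ while $\alpha^{-1}$ does not, the word $\ve\alpha$ is never reached, so no amount of ``comparing forward and backward iterates'' of these two formulas can prove $\alpha \in H$. (Full forward iteration around the cycle terminates at $\ve = (1\ 2)(4\ 5\ \cdots\ n)\ve(4\ 5\ \cdots\ n)^{-1}$, an instance of \cref{eq-sym-conj} with an \emph{odd} conjugator, not an instance of the lemma.) The missing ingredient, and the heart of the paper's proof, is to re-use \ref{rel-T10} in its two-$\ve$ form: writing $\ve = (3\ 4)\ve(1\ 3)\ve(1\ 3)(3\ 4)$ (the paper's \eqref{eq-A1-2}, \eqref{eq-A1-3}) makes \emph{two} copies of $(1\ 2)$ appear under conjugation, and these cancel; equating two resulting expressions for $\alpha^{2}\ve\alpha^{-2}$ then gives invariance of $\ve$ under explicit $3$-cycles through the point $3$ (\eqref{eq-even-conj}, \eqref{eq-A1-6}, \eqref{eq-even-conj-second} and their iterates), and these generate $A_{\{3,\ldots,n\}}$. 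Note finally that your guess about what extra input is needed is off target: the paper's proof uses only \ref{rel-T10}, \ref{rel-T11} and $R$ --- the rank $n-2$ relations \ref{rel-T7} and \ref{rel-T9} play no role --- and relation \ref{rel-T4}, which is literally your membership $\alpha\in H$, is deduced in the paper as a corollary of this lemma, not used in its proof. (Your worry about $n=5$ is also unnecessary: there $\alpha = (3\ 4\ 5)$, and the reduction is unaffected.)
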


We will find the following straightforward consequence of the relations in
both $\T_{odd}$ and $\T_{even}$ useful:
\begin{equation}\label{eq-A1-1}
  \beta^{-1} \ve \beta \stackrel{\ref{rel-T-beta}}{=} (1, 2) \cdot
  \ve (1, 3) \ve (1, 3)
  \stackrel{\ref{rel-T-alpha}}{=} (1, 2)\alpha^{-1} \ve \alpha.
\end{equation}

\begin{proof}[Proof of Lemma~\ref{lemma-conj-by-even} when $n$ is odd.]
  Recall from \cref{theorem-full-transf-main} that, since $n$ is odd,
  $\alpha = (3, 4)$ and $\beta = (3, 4, \ldots, n)\in S_n$. We set $\sigma =
  (4, 5, \ldots, n)$. We will prove by induction that
  \[
    (3, j, (j)\sigma ^2)\ve (3, (j)\sigma ^2, j) = \ve
  \]
  for all $j\in \{4, \ldots, n\}$. It will follow from this that
  $(3, k+2, k)
  \ve (3, k, k + 2) = \ve$ for all $k\in  \{4, \ldots, n\}$ and
  the proof will
  be complete by \cref{lemma-gen-alt-group-1}.

  The base case of our induction is when $j = n - 1$. Observe that
  \begin{equation}\label{eq-A1-2}
    \begin{aligned}
      \ve &= (3, 4) \ve (1, 3) \ve (1, 3) (3, 4)&&
      \text{by rearranging~\ref{rel-T-alpha}}\\
      &= (3, 4) \ve (3, 4) \cdot (1, 4)\cdot  (3, 4) \ve
      (3, 4) \cdot (1, 4) && \text{conjugating by }(3, 4)\\
      &= (1, 2)\beta^{-1} \ve \beta
      \cdot (1, 4) \cdot (1, 2) \beta^{-1} \ve \beta \cdot (1, 4)
      &&\text{by~\eqref{eq-A1-1}}.
    \end{aligned}
  \end{equation}
  Conjugating~\eqref{eq-A1-2} by $\beta^{-1}$, we obtain
  \begin{equation}\label{eq-A1-3}
    \beta \ve \beta^{-1} = (1, 2) \ve (1, 3) (1, 2) \ve (1, 3).
  \end{equation}
  Conjugating~\eqref{eq-A1-3} by $\beta^{-1}$, we get
  \begin{equation}\label{eq-1}
    \beta^2\ve \beta^{-2} = (1, 2) \beta \ve \beta^{-1} (1, n)
    (1, 2) \beta \ve \beta^{-1} (1, n).
  \end{equation}
  To proceed, we find a different expression for $\beta \ve
  \beta^{-1}$.  In this direction,
  \[
    (1, 2) \ve  \stackrel{\eqref{eq-A1-1}}{=} \beta (3, 4)\cdot
    \ve\cdot (3, 4) \beta^{-1}
    =  (3, n) \beta\cdot \ve \cdot \beta^{-1} (3, n) \quad
    \text{since }\beta (3, 4) = (3, n)\beta.
  \]
  Hence rearranging the previous equation we obtain
  \begin{equation}\label{eq-A1-4}
    \beta \ve \beta^{-1} =
    (1, 2) (3, n) \ve(3, n).
  \end{equation}
  Substituting~\eqref{eq-A1-4} into~\eqref{eq-1}, we get

  \begin{equation}\label{eq-xxx}
    \begin{aligned}
      \beta^2\ve \beta^{-2} &= (1, 2) \cdot (1
      \ 2) (3, n) \ve (3, n) \cdot (1, n) (1, 2) \cdot (1, 2) (3
      \ n) \ve (3, n) \cdot (1, n)\\
      &= (3, n) \ve (3, n) (1, n) (3, n) \ve (3
      \ n) (1, n)\\
      &= (3, n) \ve (1, 3) \ve (1, 3) (3, n)&& \text{since }
      (3, n)(1, n)=(1, 3)(3, n)\\
      &= (3, n) (3, 4) \ve (3, 4) (3, n) && \text{by~\ref{rel-T-alpha}}.
    \end{aligned}
  \end{equation}
  We will use~\eqref{eq-A1-4} to find a second expression for $\beta^2
  \ve \beta^{-2}$. Conjugating both sides of~\eqref{eq-A1-4} by
  $\beta^{-1}$
  \begin{equation}\label{eq-A1-5}
    \begin{aligned}
      \beta^2\ve \beta^{-2} & = (1, 2) (n - 1, n) \beta
      \ve \beta^{-1} (n - 1, n)\\
      & = (1, 2) (n - 1, n) (1, 2) (3, n) \ve (3, n) (n - 1, n)
      &&\text{by~\eqref{eq-A1-4}}\\
      & =  (n - 1, n) (3, n) \ve (3, n) (n - 1, n).
    \end{aligned}
  \end{equation}
  Equating the right hand sides of~\eqref{eq-xxx} and~\eqref{eq-A1-5} yields
  \(
    (3, n) (3, 4) \ve (3, 4) (3, n) = (n - 1, n) (3, n) \ve (3,
    n) (n - 1, n)
  \),
  and this can be rearranged to
  \begin{equation*}
    \ve = (3, 4, n - 1)\ve (3, n - 1, 4).
  \end{equation*}
  This establishes the base case of our induction.

  Assume that $(3,j, (j)\sigma^2)\ve (3, (j)\sigma^2, j)=\ve$
  for some $j\in \{4, \ldots, n\}$. We show that
  \[(3,(j)\sigma, (j)\sigma^3)\ \ve\ (3, (j)\sigma^3, (j)\sigma)=\ve\]
  as follows:
  \begin{align*}
    (1, 2) (3, 4) \ve (3, 4)
    &= \beta^{-1} \ve \beta && \text{by~\eqref{eq-A1-1}}\\
    & = \beta^{-1}\  (3,j, (j)\sigma^2)\ \ve\ (3, (j)\sigma^2, j)\ \beta&&
    \text{by assumption}\\
    &= (4, (j)\beta, (j)\sigma^2\beta) \beta^{-1} \ve \beta (4,
    (j)\beta, (j)\sigma^2\beta)&&
    \text{as }(3, (j)\sigma^2, j)\ \beta=\beta (4, (j)\beta,
    (j)\sigma^2\beta)\\
    &= (4, (j)\beta, (j)\sigma^2\beta) \cdot (1, 2) (3 \ 4) \ve (3,
    4) \cdot (4, (j)\beta, (j)\sigma^2\beta)
    &&\text{by~\eqref{eq-A1-1}}.
  \end{align*}
  Rearranging the previous equation yields:
  \begin{equation*}
    \ve
    = (4, (j)\beta, (j)\sigma^2\beta)^{(3, 4)} \cdot  \ve
    \cdot (4, (j)\beta, (j)\sigma^2\beta)^{(3, 4)}.
  \end{equation*}
  There are three cases to consider.
  \begin{enumerate}
    \item
      If
      $(j)\beta\neq 3$ and $(j)\sigma^2\beta\neq 3$, it follows that
      \[
        \ve = (3, (j)\beta, (j)\sigma^2\beta) \cdot \ve \cdot(3, (j)\beta,
        (j)\sigma^2\beta).
      \]
      Since $(j)\beta, (j)\sigma^2\beta\in \{5, \ldots, n\}$, it
      follows that
      $j, (j)\sigma^2 \in \{4, \ldots, n - 1\}$. Hence $(j)\beta =
      (j)\sigma$ and
      $(j)\sigma^2\beta = (j)\sigma^3$ and so
      \[
        \ve = (3, (j)\sigma, (j)\sigma^3) \cdot \ve \cdot(3, (j)\sigma,
        (j)\sigma^3),
      \]
      as required.

    \item
      If $(j)\beta =3$, then $j = n$ and $(j)\sigma^2 = 5$, and so
      \begin{align*}
        \ve
        & = (4, (j)\beta, (j)\sigma^2\beta)^{(3, 4)} \cdot  \ve
        \cdot (4, (j)\beta, (j)\sigma^2\beta)^{(3, 4)}\\
        &= (4, 3, 6)^{(3, 4)} \cdot \ve \cdot (4, 6, 3)^{(3, 4)}\\
        &= (3, 4, 6)\cdot \ve \cdot (3, 6, 4)\\
        &= (3, (j)\sigma, (j)\sigma^3)\cdot\ve \cdot (3, (j)\sigma ^3,
        (j)\sigma),
      \end{align*}
      as required.

    \item
      The case when $(j)\sigma^2\beta = 3$ is similar to the previous
      case.\qedhere
  \end{enumerate}
\end{proof}

\begin{proof}[Proof of Lemma~\ref{lemma-conj-by-even} when $n$ is even.]
  Throughout this section we suppose that $n \geq 8$ is even and
  we will write
  $\alpha = (3,\ldots, n)$ and $\beta = (3, 7, 6, 4, 5)$.
  By $(\ref{rel-T-alpha})$ and $(\ref{rel-T-beta})$,
  \begin{align}
    \alpha^{-1}\zeta\alpha &=
    (1,2)\beta^{-1}\zeta\beta,\label{alphabeta2} \\
    \zeta &= (1,2)\beta\alpha^{-1}\zeta\alpha\beta^{-1}.\label{UsefulEq2}
  \end{align}
  Now, from (\ref{rel-T-beta}), we obtain
  \begin{align*}
    \zeta &=(1,2)\beta\zeta(1,3)\zeta(1,3)\beta^{-1} \\
    &=\beta(\beta\alpha^{-1}\zeta
    \alpha\beta^{-1})(1,3)(1,2)(\beta\alpha^{-1}\zeta
    \alpha\beta^{-1})(1,3)\beta^{-1} &\text{by~\eqref{UsefulEq2}.}
  \end{align*}
  Thus, by conjugating by $\beta^2\alpha^{-1}$,
  \begin{align*}
    \alpha\beta^{-2}\zeta\beta^2\alpha^{-1} &=
    \zeta \alpha\beta^{-1}(1,3)(1,2)\beta\alpha^{-1}\zeta \alpha
    \beta^{-1}(1,3)\beta\alpha^{-1} \\
    &=(1,2)(\beta\alpha^{-1}\zeta\alpha\beta^{-1})\alpha\beta^{-1}(1,3)\beta\alpha^{-1}(\beta\alpha^{-1}\zeta\alpha\beta^{-1})\alpha\beta^{-1}(1,3)\beta\alpha^{-1}
    &&\text{by~\eqref{UsefulEq2}}\\
    &=(1,2)\beta\alpha^{-1}\zeta(1,3)\zeta(1,3)\alpha\beta^{-1} \\
    &=\beta\alpha^{-1}(\beta^{-1}\zeta\beta)\alpha\beta^{-1}
    &&\text{by (\ref{rel-T-beta})}
  \end{align*}
  where the third equality holds because $3$ is fixed by
  $\alpha\beta^{-1}\alpha\beta^{-1}$ and hence $(1,3)$ commutes with
  $\alpha\beta^{-1}\alpha\beta^{-1}$. Moreover,
  \begin{align*}
    \alpha\beta^{-2}\zeta\beta^2\alpha^{-1}
    &=(1,2)\alpha\beta^{-1}\alpha^{-1}\zeta\alpha\beta\alpha^{-1}
    &\text{by~\eqref{alphabeta2}} \\
    &=
    \alpha\beta^{-1}\alpha^{-1}(\beta\alpha^{-1}\zeta\alpha\beta^{-1})\alpha\beta\alpha^{-1}
    &\text{by~\eqref{UsefulEq2}.}
  \end{align*}
  So, by equating both expressions we have that
  \begin{equation}\label{tauequal2}
    \zeta =
    \beta\alpha\beta^{-1}\alpha\beta^{-1}\alpha^{-1}\beta\alpha^{-1}\zeta\alpha\beta^{-1}\alpha\beta\alpha^{-1}\beta\alpha^{-1}\beta^{-1}
    = \tau\zeta\tau^{-1}.
  \end{equation}
  Furthermore, we can see that
  \begin{equation}\label{eq-6}
    \begin{aligned}
      \zeta &= (1,2)\beta\alpha^{-1}\zeta\alpha\beta^{-1}
      &&\text{by~\eqref{UsefulEq2}} \\
      &= (1,2)\beta\alpha^{-1}\tau\zeta\tau^{-1}\alpha\beta^{-1}
      &&\text{by~\eqref{tauequal2}} \\
      &=
      (1,2)\beta\alpha^{-1}\tau\alpha(\alpha^{-1}\zeta\alpha)\alpha^{-1}\tau^{-1}\alpha\beta^{-1}
      \\
      &=\beta\alpha^{-1}\tau\alpha\beta^{-1}\zeta\beta
      \alpha^{-1}\tau^{-1}\alpha\beta^{-1} &&\text{by
      (\ref{rel-T-alpha}) and (\ref{rel-T-beta}).}
    \end{aligned}
  \end{equation}
  Moreover, by applying~\eqref{UsefulEq2} twice, we obtain that
  \begin{equation}\label{eq-7}
    \zeta =
    \beta\alpha^{-1}\beta\alpha^{-1}\zeta\alpha\beta^{-1}\alpha\beta^{-1}.
  \end{equation}

  Therefore, by~\eqref{tauequal2},~\eqref{eq-6},~\eqref{eq-7}, and
  \cref{conj:tauconjugates}, $\zeta =
  \sigma^{-1}\zeta\sigma$ for all $\sigma \in A_{\{3,\dots,n\}}$.
\end{proof}

\begin{corollary}\label{eq-sym-conj}
  If $\sigma \in A ^*$ represents an odd permutation in $S_{\{3,
  4, \ldots, n\}}$, then $ \sigma^{-1} \ve \sigma = (1, 2) \ve $ holds in
  $\T_{odd}$ and $\T_{even}$.
\end{corollary}
\begin{proof}
  Suppose that $\sigma \in S_{\{3, 4, \ldots, n\}}$  is odd.
  If $n$ is odd, then $\alpha = (3, 4)$ is an odd permutation, and
  $\beta= (3, 4, \ldots, n)$ is an even permutation.
  If $n$ is even, then $\alpha = (3, 4, \ldots, n)$ is odd and
  $\beta= (3, 7, 6, 4, 5)$ is even. In both cases, $\alpha$ is
  odd and $\beta$
  is even.

  \cref{lemma-conj-by-even} implies that $\beta^{-1} \ve \beta
  = \ve$, and
  so~\eqref{eq-A1-1} may be rearranged to give
  \begin{equation}\label{eq-A1-7}
    \alpha^{-1} \ve \alpha = (1, 2) \ve.
  \end{equation}
  Since $\sigma$ is odd,  $\sigma = \alpha\tau$ for some $\tau\in
  A_{\{3, 4, \ldots, n\}}$.
  Hence
  \begin{align*}
    \sigma^{-1} \ve \sigma &=  \tau
    ^{-1}\alpha^{-1}\ve \alpha\tau
    = \tau^{-1} (1, 2) \ve\tau && \text{by~\eqref{eq-A1-7}}\\
    & = (1, 2)  \tau^{-1} \ve\tau   = (1, 2) \ve &&
    \text{by \cref{lemma-conj-by-even}}.\qedhere
  \end{align*}
\end{proof}

Next, we establish that $\ve$ is an idempotent, which we require later.

\begin{lemma}\label{lemma-idempotent}
  The relation $\ve ^ 2 = \ve$ holds in $\T_{odd}$ and $\T_{even}$.
\end{lemma}

\begin{proof}
  Combining~\eqref{eq-A1-7} and the defining
  relation~\ref{rel-T-alpha} gives
  \begin{equation}\label{eq-A1-8}
    (1, 2) \ve = \ve (1, 3) \ve (1, 3).
  \end{equation}
  If $\tau \in S_{\{3, 4, \ldots, n\}}$ is odd and $(3) \tau = 3$,
  then conjugating the left-hand side of~\eqref{eq-A1-8} by $\tau$ gives
  \begin{equation}\label{eq-A1-9}
    \tau^{-1}  (1, 2) \ve  \tau = (1, 2)  \tau^{-1} \ve
    \tau\stackrel{\eqref{eq-sym-conj}}{=} (1, 2)  (1, 2) \ve = \ve.
  \end{equation}
  Conjugating the right-hand side of~\eqref{eq-A1-8} by $\tau$ yields
  \begin{equation}\label{eq-A1-10}
    \begin{aligned}
      \tau^{-1} \ve (1, 3) \ve (1, 3) \tau
      &= \tau^{-1} \ve \tau \cdot (1, 3) \cdot \tau^{-1} \ve \tau
      \cdot (1, 3)
      && \text{via conjugation, since $(3) \tau = 3$}\\
      &= (1, 2) \ve \cdot (1, 3) \cdot (1, 2) \ve \cdot (1, 3)
      && \text{by \cref{eq-sym-conj}}\\
      &=(1, 2) \ve (1, 3) \cdot \ve (1, 3) \ve (1, 3) \cdot (1, 3)
      && \text{by~\eqref{eq-A1-8}}\\
      &= (1, 2) \cdot \ve (1, 3) \ve (1, 3) \cdot \ve\\
      &=(1, 2) \cdot (1, 2) \ve \cdot \ve
      && \text{by~\eqref{eq-A1-8}}\\
      &= \ve^2.
    \end{aligned}
  \end{equation}
  Equating the right-hand sides of~\eqref{eq-A1-9} and~\eqref{eq-A1-10} tells us
  that $\ve^2 = \ve$.
\end{proof}

\begin{lemma}\label{lemma-assuming-T1}
  If~\ref{rel-T1} holds in $\T_{odd}$ and $\T_{even}$, then the
  relation $\ve = (1, 2)\ve$ also holds.
\end{lemma}
\begin{proof}
  Given that~\ref{rel-T1} holds in $\T_{odd}$ and $\T_{even}$, it
  follows that
  \begin{equation*}
    \begin{aligned}
      (1, 2) \ve \stackrel{\ref{rel-T1}}{=}
      (1, 2)\ve(1, 3) \ve (1, 3)
      \stackrel{\ref{rel-T-beta}}{=}
      \beta^{-1} \ve \beta
    \end{aligned}
  \end{equation*}
  and $\beta^{-1}\ve \beta = \ve$ by
  \cref{lemma-conj-by-even}, since $\beta\in A_{\{3, \ldots, n\}}$.
\end{proof}

We can now show that the relations~\ref{rel-T1},~\ref{rel-T3},
and~\ref{rel-T9}
hold in $\T_{odd}$ and $\T_{even}$:
\begin{enumerate}
  \item[(\ref{rel-T1})] $\ve (1, 3) \ve (1, 3)= \ve$ holds in
    $\T_{odd}$ and $\T_{even}$ since
    \begin{equation*}
      \begin{aligned}
        \ve &= (1, 2) \ve \cdot (1, 3) \ve (1, 3) &&
        \text{rearranging~\eqref{eq-A1-8}}\\
        &=\ve (1, 3) \ve (1, 3) \cdot
        (1, 3) \ve (1, 3) && \text{by~\eqref{eq-A1-8}} \\
        &= \ve (1, 3) \ve^2 (1, 3)\\
        &= \ve (1, 3) \ve (1, 3) && \text{since $\ve^2 = \ve$ by
        \cref{lemma-idempotent}}.
      \end{aligned}
    \end{equation*}

  \item[(\ref{rel-T3})]
    \cref{eq-sym-conj} implies that  $(3,4)\zeta(3,4) = (1,2)\zeta$
    since $(3, 4)$
    is an odd permutation.
    Because we have shown that~\ref{rel-T1} holds in $\T_{odd}$ and
    $\T_{even}$, \cref{lemma-assuming-T1} then implies that
    $(3,4)\zeta(3,4) = (1,2)\zeta = \zeta$.

  \item[(\ref{rel-T9})]
    If $n$ is odd, then~\ref{rel-T9} is $\beta (1, 2)\ve = \ve \beta$.
    By \cref{eq-sym-conj}, since $\beta$ is odd, $\beta (1, 2)\ve
    \beta^{-1} = \ve$ and so
    $\beta (1, 2)\ve = \ve \beta$.
    Hence~\ref{rel-T9} holds when $n$ is odd.

    If $n$ is even,~\ref{rel-T9} is $\alpha (1, 2)\ve = \ve \alpha$.
    Since $\alpha$ is odd, $\alpha \ve = (1, 2)\ve\alpha$ by
    \cref{eq-sym-conj}. Hence
    \[
      \alpha (1, 2) \ve = (1, 2)\alpha\ve = (1, 2)(1, 2)\ve\alpha
      = \ve\alpha.
    \]
\end{enumerate}

\subsection{The $n = 1$ to $6$ cases}\label{section-Tn-leq-4}

Analogous to \cref{section-In-leq-3}, in this section we consider
presentations
for $T_n$ when $n = 1$ to $6$ since these are not covered by
\cref{theorem-full-transf-main}. Note that we discussed the case of $n = 5$
immediately after the statement of the theorem and so refer the
reader to that.
\GAP and Python code for performing the computations
mentioned in this section can be found in~\cite{libsemigroupsShortCaseStudy}.

Clearly when $n = 1$, the monoid $T_1$ is trivial and hence has a
presentation
of length $0$.

For $n = 2$, it can be verified using, for example, the \Semigroups
package for
\GAP, that the presentation
\[
  \presn{a_2, \ve}{a_2^2=1,\quad a_2\ve = \ve,\quad \ve^2 =\ve}
\]
defines $T_2$ (where $a_2$ represents the transposition $(1, 2)$).
Similar to the argument outlined in \cref{section-In-leq-3}, it can be shown
that any presentation for $T_2$ must contain at least two
relations in addition
to those defining the symmetric group.

For $n = 3$, it can be verified computationally that the presentation
\[
  \presn{a_2,a_3,\ve}{a_2^2=1,\quad  a_3 ^ 2 = 1,\quad
    a_2a_3a_2=a_3a_2a_3,\quad  a_2\ve =
    \ve,\quad  \ve a_3\ve a_3= \ve,\quad  (a_2a_3a_2\ve)^3 a_2a_3a_2 =
  \ve a_2a_3a_2 \ve}
\]
defines $T_3$ (where $a_2$ and $a_3$ represent the transpositions $(1, 2)$ and
$(1, 3)$, respectively). It can be
shown that any presentation for $T_3$ contains at least three non-$S_n$
relations, so this presentation also has the minimum possible number.

For $n = 4$, it is possible to verify computationally that the presentation:
\begin{equation}\label{align-full-transf-4}
  \begin{aligned}
    \langle a_2, a_3, a_4, \ve \mid &\ a_2 ^ 2 = a_3 ^ 2 = a_4 ^ 2
    = (a_2a_3)^3 = (a_3a_4) ^ 3 = (a_4a_2)^ 3 = 1,\\
    &\ (a_2a_3a_2a_4)^2 = (a_3a_4a_3a_2)^2 = (a_4a_2a_4a_3) ^ 2 = 1, \\
    &\ a_2\ve = \ve^2 a_3 \ve  a_3,\quad
    (a_4a_2a_3a_2\ve)^2 = (\ve a_4a_2a_3a_2) ^ 2,\\
    &\ a_3a_4a_3 \ve a_3a_4a_3 = \ve a_3 \ve a_3,\quad
    (a_2a_3a_2  \ve)^3 a_2a_3a_2 = \ve  a_2a_3a_2\ve
    \rangle
  \end{aligned}
\end{equation}
defines $T_4$. As discussed in \cref{section-intro},
\cref{theorem-full-transf-main} states that every presentation for $T_4$
contains at least $4$ non-$S_4$ relations, so this presentation also
has the minimum possible number.

For $n = 6$, again it is possible to verify computationally that the following
presentation defines $T_6$:
\begin{equation}\label{align-full-transf-6}
  \begin{aligned}
    \langle a_2, a_3, a_4, a_5, a_6, \ve \mid &\ a_i ^ 2 = 1\ (i =
    2, \ldots 6),\
    (a_ia_{i + 1})^3 = 1\ (i = 2, \ldots 5),\  (a_6a_2)^ 3 = 1,\\
    & \ {(a_{i}a_{i+1}a_{i}a_{j})}^2=1\quad (2\leq i, j\leq 6,\
    j\not\in \{i, i + 1\},\ a_{7} = a_2) \\
    & \
    (\ve a_6a_2a_3a_2)^2 = (a_6a_2a_3a_2 \ve)^2,\\
    &\ (a_2a_3a_2  \ve)^3 a_2a_3a_2 = \ve a_2a_3a_2\ve,\\
    &\ a_3a_4a_3 \ve a_3a_4a_3 = \ve a_3 \ve a_3,\\
    & \ a_3a_6a_5a_4a_3 \ve  a_3a_4a_5a_6a_3 = a_2 \ve
    \rangle.
  \end{aligned}
\end{equation}
By \cref{theorem-full-transf-main} every presentation for $T_6$ contains at
least $4$ non-$S_6$ relations, and the presentation above meets this bound.

%
%
%
%

\section{Partial transformation monoid}\label{section-PTn}

In this section we prove \cref{theorem-PTn-main}.
This section is organised analogously to~\cref{section-Tn}.

\subsection{A lower bound of $8$}\label{subsection-PTn-lower-bound}

Throughout this section we suppose that $n \geq 4$. In this
section we will show
that if $\presn{A}{R}$ is any monoid presentation for the symmetric
group $S_n$ and $\mathcal{P} = \presn{A, B}{R, S}$ is a monoid
presentation for $PT_n$, then $|S| \geq 8$.

As in the previous sections we may suppose without loss of
generality that $B =
\{\ve, \vep\}$ where $\ve \in T_n$ is the idempotent with rank $n - 1$ and
$(2)\ve = 1$, and $\vep \in I_n$ is the idempotent with rank $n -
1$ such that
$\dom(\vep) = \{2, \ldots, n\}$. We denote by $\phi: (A\cup
\{\vep, \ve\})^* \to
PT_n$ the natural surjective homomorphism extending the inclusion of $A\cup
\{\ve, \vep\}$ in $PT_n$.

It suffices to prove the following theorem.
\begin{theorem} \label{thm-atleast8-for-pt}
  If $\presn{A}{R}$ is any monoid presentation for the symmetric group
  $S_n$ and $\presn{A, \ve, \vep}{R, S}$ is a monoid presentation for
  $PT_n$ for $n\geq 4$, then $|S| \geq 8$.
\end{theorem}

As in the previous sections, we consider the rank $n - 1$ and $n - 2$ cases
separately. We begin by making the following straightforward observation about
leading non-permutations in rank $n - 1$.

\begin{lemma}\label{lemma-leading-epsilon-determines-kind}
  Suppose that $u \in (A\cup \{\vep,\ve\})^*$ has rank $n - 1$. Then
  $u$ has leading
  non-permutation $\ve$ if and only if $(u) \phi \in T_n$.
  Similarly, $u$ has
  leading non-permutation $\vep$ if and only if $(u) \phi \in I_n$.
\end{lemma}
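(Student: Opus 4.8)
The plan is to prove both biconditionals by exploiting the structure of the generators together with the ideal structure of $PT_n$. Recall that $\ve\in T_n$ is a \emph{full} transformation (so $\dom(\ve) = \{1,\ldots,n\}$), while $\vep\in I_n$ is a partial bijection with $\dom(\vep) = \{2,\ldots,n\}\subsetneq \{1,\ldots,n\}$. The generators in $A$ all represent permutations, so they too are full (and bijective). The crux of the matter is to track the domain of a word: composition satisfies $\dom(fg)\subseteq \dom(f)$, so applying $\vep$ can only shrink the domain, whereas applying any permutation or applying $\ve$ preserves the property of being a full transformation.

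\medskip

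First I would set up notation: since $u$ has rank $n-1$, by definition it is not a permutation, so it has a leading non-permutation, call it $x\in\{\ve,\vep\}$, and we may write $u = p\,x\,u'$ where $p\in A^*$ is the leading permutation of $u$ (so $p\phi\in S_n$) and $u'\in (A\cup\{\ve,\vep\})^*$. For the forward direction of the first biconditional, suppose the leading non-permutation is $\ve$. Since $p\phi$ is a permutation, $\dom((p)\phi) = \{1,\ldots,n\}$, and $\ve$ is a full transformation, the prefix $p\ve$ represents a full transformation of rank $n-1$. The subtlety is that $u'$ might contain occurrences of $\vep$, which could in principle shrink the domain; so I must rule this out using the rank hypothesis. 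The key observation is that $(u)\phi$ has rank exactly $n-1$, and $(p\ve)\phi$ already has rank $n-1$; since right-multiplication can only decrease or preserve rank, every subsequent factor must preserve rank. A factor $\vep$ applied to a full transformation $f$ of rank $n-1$ yields $f\vep$, whose domain is $\{x : (x)f\in\dom(\vep)\} = \{x : (x)f\neq 1\}$; because $f$ is full of rank $n-1$ exactly one value is omitted and exactly two points share an image, so removing the preimage(s) of $1$ strictly drops either the domain size or the rank unless $1\notin\im(f)$. I would argue that any occurrence of $\vep$ after the leading $\ve$ would force the rank below $n-1$ unless it acts as the identity on the relevant domain, and in all rank-preserving cases the resulting transformation remains full (total), giving $(u)\phi\in T_n$.

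\medskip

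For the converse, suppose instead the leading non-permutation were $\vep$ rather than $\ve$; then $u = p\,\vep\,u'$ and $(p\vep)\phi$ is a partial bijection with $|\dom| = n-1$, hence not a full transformation. Since $\dom(fg)\subseteq\dom(f)$ for all composites, $\dom((u)\phi)\subseteq \dom((p\vep)\phi)\subsetneq\{1,\ldots,n\}$, so $(u)\phi$ is not total and therefore $(u)\phi\notin T_n$. This contrapositive establishes that $(u)\phi\in T_n$ forces the leading non-permutation to be $\ve$, completing the first biconditional. The second biconditional, concerning $\vep$ and membership in $I_n$, follows by an entirely symmetric argument: a leading $\vep$ immediately makes the word a partial bijection (of rank $n-1$, hence injective on its domain and missing a point), while a leading $\ve$ makes it a genuine (non-injective, since $\ve$ merges $1$ and $2$) full transformation, which cannot lie in $I_n$.

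\medskip

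The main obstacle I anticipate is the forward direction just sketched: controlling what happens when $\vep$ appears \emph{after} the leading $\ve$ (and dually $\ve$ after a leading $\vep$). The clean way around this is to invoke the rank hypothesis rigorously via the ideal structure recorded in \cref{lemma-higher-ranks} together with the elementary fact that $\rank(fg)\le\min(\rank(f),\rank(g))$ and $\dom(fg)\subseteq\dom(f)$. Because the prefix up to and including the leading non-permutation already achieves rank $n-1$, no later factor can reduce the rank (that is forced by $(u)\phi$ having rank exactly $n-1$), and a short case analysis shows that a rank-preserving application of $\vep$ to a total map of rank $n-1$ leaves it total precisely when $1\notin\im$, whereas a rank-preserving application of $\ve$ to a partial bijection keeps it a partial bijection only when the merged pair lies outside the domain—so the \emph{kind} (total versus strictly partial) is an invariant determined solely by the leading non-permutation. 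I would present this invariance as the organising principle rather than tracking each factor individually.
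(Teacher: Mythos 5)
Your proof is correct and takes essentially the same route as the paper: decompose $u$ at its leading non-permutation, observe that the prefix $\sigma\ve$ (resp.\ $\sigma\vep$) already represents a rank $n-1$ full transformation (resp.\ partial bijection), and use the hypothesis that $u$ itself has rank $n-1$ to conclude that this type persists through the remaining letters. The paper asserts that final propagation step without detail, whereas you spell out the rank-preservation case analysis (and obtain the converse directions via the domain-shrinking observation $\dom(fg)\subseteq\dom(f)$); the substance of the two arguments is the same.
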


\begin{proof}
  Suppose that $u \in (A\cup \{\vep,\ve\})^*$ has rank $n - 1$. We
  write $u = \sigma \varepsilon
  w$, where $\sigma$ is the leading permutation of $u$ and $\varepsilon\in
  \{\ve, \vep\}$ is the leading non-permutation. Clearly such a leading
  non-permutation exists because $u$ does not have rank $n$.

  If $\varepsilon = \ve$, then the prefix $\sigma \ve$ of $u$ represents a
  rank $n - 1$ full transformation in $PT_n$. Since $u$ itself has rank $n -
  1$, it follows that $\ker(\sigma\ve) = \ker(u)$, so $u\in T_n$.

  If $\varepsilon = \vep$, then the prefix $\sigma \vep$ of $u$
  represents an injective partial transformation of rank $n - 1$.
  Since $u$ also has rank $n - 1$,  it follows that $\ker(\sigma\vep)
  = \ker(u)$, and hence $u$ must also be an
  injective partial transformation, as required.
\end{proof}

An immediate corollary of
\cref{lemma-leading-epsilon-determines-kind} is the
following.

\begin{corollary}\label{lemma-leading-epsilon-invariant-rank-n-1}
  If $u, v \in (A\cup \{\vep,\ve\})^*$ are of rank $n - 1$ and $(u)
  \phi = (v) \phi$, then the
  leading non-permutations of $u$ and $v$ are the same.\qed
\end{corollary}

Since rank $n - 1$ words have leading non-permutation either
$\ve$ or $\vep$,
and both $T_n$ and $I_n$ embed in $PT_n$, an argument similar to
that used in
the proof of \cref{lemma-leading-perm-In,lemma-leading-perm} can be
used to prove
the following.

\begin{lemma}[cf.
  \cref{lemma-leading-perm-In,lemma-leading-perm}]\label{lemma-PTn-leading-perm}
  Suppose $u, v\in (A\cup \{\vep,\ve\}) ^*$ are such that $(u)\phi =
  (v)\phi$, and $u$ and $v$
  have rank $n - 1$. Then the following hold:
  \begin{enumerate}[\rm (a)]
    \item if the leading non-permutation of $u$ and $v$ is $\vep$,
      then the leading
      permutations of $u$ and $v$ belong to the same left coset
      of $\Stab(1)$.

    \item
      if the leading non-permutation of $u$ and $v$ is $\ve$,
      then the leading
      permutations of $u$ and $v$ belong to the same left coset
      of $\Stab(\{1,
      2\})$;
      \qedhere
  \end{enumerate}
\end{lemma}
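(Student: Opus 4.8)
The plan is to reduce immediately to two cases using \cref{lemma-leading-epsilon-invariant-rank-n-1}, which tells us that $u$ and $v$ share a common leading non-permutation $\varepsilon \in \{\ve, \vep\}$; each case then mirrors one of the earlier leading-permutation lemmas. In both cases I would write $u = u_1\varepsilon u_2$ and $v = v_1\varepsilon v_2$, where $u_1\phi, v_1\phi \in S_n$ are the leading permutations of $u$ and $v$, and set $\tau = (u_1\phi)^{-1}v_1\phi$. Left-multiplying the hypothesis $u\phi = v\phi$ by $(u_1\phi)^{-1}$ yields $\varepsilon u_2\phi = \tau\cdot \varepsilon v_2\phi$, so the entire argument reduces to showing that $\tau$ lies in the appropriate point- or set-stabiliser.

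For part (a), where $\varepsilon = \vep$, both sides represent rank $n - 1$ partial bijections by \cref{lemma-leading-epsilon-determines-kind}, and I would argue with domains exactly as in \cref{lemma-leading-perm-In}. Since $\dom(\vep) = \{2, \ldots, n\}$ and $\vep u_2\phi$ has rank $n - 1$ with domain contained in this $(n-1)$-element set, its domain is exactly $\{2, \ldots, n\}$, the unique missing point being $1$. On the right-hand side, the domain of $\tau\cdot \vep v_2\phi$ is contained in $\set{x}{(x)\tau \neq 1} = \{1, \ldots, n\}\setminus\{(1)\tau^{-1}\}$, again of size $n - 1$, hence equal to it. Equating the two domains forces $(1)\tau^{-1} = 1$, that is, $\tau \in \Stab(1)$, which is precisely the statement that the leading permutations lie in the same left coset of $\Stab(1)$.

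For part (b), where $\varepsilon = \ve$, both sides now represent rank $n - 1$ full transformations, so I would argue with kernels in the manner of \cref{lemma-leading-perm}. As $\ker(\ve) \subseteq \ker(\ve u_2\phi)$ with both transformations of rank $n - 1$, the unique non-trivial kernel class of $\ve u_2\phi$ is that of $\ve$, namely $\{1, 2\}$. On the right-hand side, composing with the permutation $\tau$ shows that the unique non-trivial kernel class of $\tau\cdot \ve v_2\phi$ is $\{(1)\tau^{-1}, (2)\tau^{-1}\}$. Equating these classes gives $\{1, 2\}\tau^{-1} = \{1, 2\}$, i.e.\ $\tau \in \Stab(\{1, 2\})$, as required.

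I expect no genuine obstacle here, since the lemma is explicitly flagged as following the template of \cref{lemma-leading-perm-In,lemma-leading-perm}. The only mild subtlety worth stating carefully is that the rank $n - 1$ hypothesis is what pins the domain (respectively, the single non-trivial kernel class) of $\varepsilon u_2\phi$ to coincide with that of $\varepsilon$ itself; once this is in place, matching domains and kernels across the equation $\varepsilon u_2\phi = \tau\cdot\varepsilon v_2\phi$ is immediate, and care is only needed to track the right-action convention $(x)f$ when computing how $\tau$ moves the relevant point or pair.
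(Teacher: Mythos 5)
Your proposal is correct and follows essentially the same route the paper intends: the paper omits the detailed proof, noting only that one argues as in \cref{lemma-leading-perm-In,lemma-leading-perm}, and your two cases reproduce exactly those arguments (comparing domains when the leading non-permutation is $\vep$, and the unique non-trivial kernel class when it is $\ve$, after factoring out the leading permutations). The only presentational difference is that you make explicit the rank-counting step pinning down the domain, respectively the kernel, of $\varepsilon u_2\phi$, which the paper leaves implicit; this is a faithful filling-in, not a different method.
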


Again via similar arguments, we can see also that there must be rank $n - 1$
relations concerning words whose leading non-permutation is $\vep$ as in
\cref{lemma-generating-stab-In}, and concerning those whose leading
non-permutation is $\ve$ as in \cref{lemma-generating-stab}.
We make this
precise in the following lemma.

\begin{lemma}[cf.
  \cref{lemma-generating-stab-In,lemma-generating-stab}]\label{lemma-generating-stab-PTn}
  Suppose that $\lambda\in \{\vep, \ve\}$ and the rank $n-1$ relations in $S$
  with leading non-permutation $\lambda$ are $q_i\lambda v_i= \lambda v_i'$ for
  $i\in \{1, \ldots, l\}$. Then the following hold:
  \begin{enumerate}[\rm (a)]
    \item
      $\Stab(1)$ is generated by $\{(q_1)\phi, \ldots, (q_l)\phi\}$
      when $\lambda = \vep$;
    \item
      $\Stab(\{1, 2\})$ is generated by $\{(q_1)\phi, \ldots,
      (q_l)\phi\}$ when $\lambda = \ve$.
  \end{enumerate}
\end{lemma}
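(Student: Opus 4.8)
The plan is to prove both parts at once by the method used for \cref{lemma-generating-stab-In}, namely tracking the value of the leading permutation along an elementary sequence between two words that represent a common element. Write $H_\lambda = \langle q_1\phi, \ldots, q_l\phi\rangle$. First I would invoke the (omitted) analogues of \cref{lemma-In-rank-n-1-1,lemma-In-rank-n-1-2} already used to normalise the rank $n-1$ relations, so that each relation with leading non-permutation $\lambda$ has the form $q_i\lambda v_i = \lambda v_i'$ with $q_i$ its leading permutation, and with $q_i\phi\in\Stab(1)$ when $\lambda=\vep$ and $q_i\phi\in\Stab(\{1,2\})$ when $\lambda=\ve$ (by \cref{lemma-PTn-leading-perm}). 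This yields the easy inclusions $H_\vep\leq\Stab(1)$ and $H_\ve\leq\Stab(\{1,2\})$; the content lies entirely in the reverse inclusions.

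The key combinatorial step is a refinement of the tracking argument. Suppose $\alpha_1, \ldots, \alpha_m$ is an elementary sequence with respect to $R\cup S$ all of whose terms represent one fixed element of rank $n-1$; the element represented is constant along any elementary sequence, so every term has rank $n-1$ and, by \cref{lemma-leading-epsilon-determines-kind}, has leading non-permutation $\lambda$, while \cref{lemma-higher-ranks} ensures that every transition uses a relation of rank $n$ (from $R$) or rank $n-1$. I claim that across each transition the leading permutation is unchanged (for relations from $R$, and for rank $n-1$ relations whose leading non-permutation is the opposite of $\lambda$) or is right-multiplied by $(q_i\phi)^{\pm1}$ for a rank $n-1$ relation with leading non-permutation $\lambda$. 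The crucial point is that a relation whose leading non-permutation is opposite to $\lambda$ has, on each side, first non-permutation letter distinct from $\lambda$, so the matched subword must lie strictly after the leading $\lambda$ of $\alpha_k$ and cannot move the leading permutation. This is precisely the case analysis of \cref{lemma-generating-stab-In}, now carried out in the presence of the two idempotent generators $\ve$ and $\vep$.

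For part (a), with $\lambda=\vep$, the remainder is identical to \cref{lemma-generating-stab-In}. Given $\sigma\in\Stab(1)$, choose $q$ with $q\phi=\sigma$; since $\sigma$ fixes $1$ and permutes $\dom\vep=\{2,\ldots,n\}$ we have $\sigma^{-1}\vep\sigma=\vep$, so $\vep q$ and $q\vep$ represent the same element. Tracking the leading permutation from $1$ (for $\vep q$) to $\sigma$ (for $q\vep$) writes $\sigma$ as a product of the $(q_i\phi)^{\pm1}$ with $\lambda=\vep$, giving $\Stab(1)\leq H_\vep$ and hence equality.

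The main obstacle is part (b), with $\lambda=\ve$, because the direct analogue fails: $\ve$ does not commute with all of $\Stab(\{1,2\})$, and in fact $\sigma^{-1}\ve\sigma=\ve$ holds only when $\sigma$ fixes both $1$ and $2$, i.e. for $\sigma\in S_{\{3,4,\ldots,n\}}$. My plan is therefore to recover the two factors of $\Stab(\{1,2\})=\langle(1\ 2)\rangle\times S_{\{3,4,\ldots,n\}}$ separately. For $\tau\in S_{\{3,4,\ldots,n\}}$ one has $\tau^{-1}\ve\tau=\ve$, so the pair $\ve t,\,t\ve$ (with $t\phi=\tau$) represents a common element and the tracking argument places $\tau$ in $H_\ve$, whence $S_{\{3,4,\ldots,n\}}\leq H_\ve$. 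For the transposition I would exploit that $(1\ 2)\ve$ and $\ve$ represent the same element of $T_n$ — this is relation \ref{rel-T2}, valid in $PT_n$ because $\ve$ collapses $\{1,2\}$ to $1$ — and track the leading permutation along a sequence from $(1\ 2)\ve$ to $\ve$ to conclude $(1\ 2)\in H_\ve$. Combining the two factors gives $\Stab(\{1,2\})\leq H_\ve$, completing the proof.
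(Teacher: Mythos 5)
Your proof is correct and is essentially the paper's intended argument: the paper gives no separate proof of this lemma, deferring to the leading-permutation tracking argument of \cref{lemma-generating-stab-In} (and the omitted proof of \cref{lemma-generating-stab}), which is exactly what you execute, including the necessary observation that a rank $n-1$ relation whose leading non-permutation differs from $\lambda$ must apply strictly after the leading $\lambda$ and so cannot alter the leading permutation. Your explicit treatment of part (b) --- noting that $\ve$ does not commute with $(1\ 2)$, and repairing this by generating $\Stab(\{1,2\})$ from $S_{\{3,\ldots,n\}}$ (via $\tau^{-1}\ve\tau=\ve$) together with $(1\ 2)$ (via $(1\ 2)\ve=\ve$) --- is a genuine subtlety that the paper leaves implicit, and you handle it correctly.
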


By \cref{lemma-leading-epsilon-invariant-rank-n-1}, the leading non-permutation
of a rank $n - 1$ word is uniquely determined, and so these respective families
of relations must be distinct. Since neither $\Stab(\{1, 2\})$ nor $\Stab(1)$
is cyclic, we obtain the following.

\begin{corollary}\label{lemma-at-least-4-n-1-PTn}
  There are at least four rank $n - 1$ relations in $S$. \qed
\end{corollary}

Next, we will show that there must be at least $4$ relations of rank $n - 2$ in
any presentation for $PT_n$.  In order to do this we extend the notion of the
kernel type of an element of $T_n$ to elements of $PT_n$. If $f\in PT_n$,
then we will say that $f$ has \defn{kernel type} $a_1^{b_1}\cdots a_k^{b_k}$
where $a_1, \ldots, a_k, b_1, \ldots, b_k\in \{1, \ldots, n\}$ and $a_1b_1+
\cdots + a_k b_k = |\dom(f)|$ to denote that $f$ has $b_i$ kernel classes of
size $a_i$ for every $i$.

We require the following analogue of~\cref{lemma-new-1} in the context of
$PT_n$.

\begin{lemma}\label{lemma-new-2}
  Suppose that $f\in PT_n$ has rank $n - 2$ and is such that $f =
  (w_1)\phi = (w_2)\phi$ for some
  $w_1, w_2\in (A\cup\{\ve,\vep\})^*$ and one of the following holds:
  \begin{enumerate}[\rm (a)]
    \item the leading non-permutation of $w_1$ and $w_2$ is $\vep$ and
      the leading permutations of $w_1$ and $w_2$
      belong to distinct cosets of $\Stab(1)$;
    \item the leading non-permutation of $w_1$ and $w_2$ is $\ve$ and
      the leading permutations of $w_1$ and $w_2$
      belong to distinct cosets of $\Stab(\{1, 2\})$;
    \item
      the leading non-permutations of $w_1$ and $w_2$ are distinct.
  \end{enumerate}
  Then there exists a rank $n -2$ relation $u=v\in S$ such that $(u)\phi$ (and
  $(v)\phi$) have the same kernel type as $f$.
\end{lemma}

The proof of \cref{lemma-new-2} is similar to that of \cref{lemma-new-1}, and
is omitted for the sake of brevity.

\begin{lemma}\label{lemma-at-least-4-n-2-PTn}
  There are at least four rank $n - 2$ relations in $S$.
\end{lemma}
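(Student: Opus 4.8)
The strategy is to imitate the proof of \cref{lemma-Tn-at-least-2-n-2}, but now exploiting the fact that $PT_n$ contains elements of rank $n-2$ coming from two genuinely different sources: those lying in the full transformation monoid $T_n$ (whose non-trivial behaviour is governed by $\Stab(\{1,2\})$) and those lying in the symmetric inverse monoid $I_n$ (governed by $\Stab(1)$). I would organise the argument around the principle that any rank $n-2$ relation is \emph{forced} precisely when there is an element $f$ of rank $n-2$ admitting two factorisations whose leading permutations lie in distinct cosets of the relevant stabiliser; by \cref{lemma-rank-n-2-rel-determines-kernel}, distinct kernel types give rise to distinct required relations, so exhibiting four elements with pairwise different ``forcing data'' yields four distinct rank $n-2$ relations.

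First I would set up the reduction exactly as in \cref{lemma-Tn-at-least-2-n-2}: if $u,v$ have rank $n-2$, $u\phi=v\phi$, and their leading permutations lie in distinct cosets of the appropriate stabiliser, then along any elementary sequence from $u$ to $v$ the $S_n$-relations preserve the leading permutation and (by \cref{lemma-PTn-leading-perm}) the rank $n-1$ relations preserve the coset of the leading permutation; hence some transition must use a rank $n-2$ relation that changes the coset. The key refinement for $PT_n$ is that rank $n-2$ elements split according to whether they lie in $T_n$ or $I_n$ (and within each, according to kernel type), and a single rank $n-2$ relation cannot simultaneously witness the coset-change for elements of different kernel types, by \cref{lemma-rank-n-2-rel-determines-kernel}. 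Thus it suffices to produce four rank $n-2$ elements, with four distinct kernel types, each admitting two factorisations whose leading permutations lie in distinct cosets of the correct stabiliser.

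For the two full-transformation witnesses I would reuse the elements $f$ (kernel type $2^21^{n-2}$) and $g$ (kernel type $3^11^{n-2}$) from the proof of \cref{lemma-Tn-at-least-2-n-2}, which are already shown to have factorisations through $\ve$ with leading permutations in distinct cosets of $\Stab(\{1,2\})$. For the two symmetric-inverse witnesses I would take rank $n-2$ partial bijections built from $\vep$: one with domain missing two points, e.g.\ the analogue of the element $\vep(1\ 2)\vep$ used in \cref{lemma-In-at-least-1-n-2} (giving a partial permutation of rank $n-2$ with two factorisations whose leading permutations lie in distinct cosets of $\Stab(1)$), and a second partial bijection of rank $n-2$ but of a different kernel/domain type, so that its $S_n$-orbit $S_n\cdot(\,\cdot\,)$ differs from that of the first. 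Since elements of $T_n$ are full and those of $I_n$ are injective, no single relation can witness coset-changes for a $T_n$-element and an $I_n$-element at once; and within each monoid the two chosen elements have distinct kernel types, so the four forced relations are pairwise distinct.

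The main obstacle, and the step requiring genuine care, is verifying that the four required relations are really \emph{distinct} as relations of $S$, rather than merely being four necessary conditions that might be discharged by fewer relations. The clean way to handle this is to argue invariant-theoretically: associate to each rank $n-2$ relation the pair consisting of (the leading non-permutation, hence whether it forces a $T_n$- or $I_n$-type coset change) together with the common kernel type of its two sides (well-defined by \cref{lemma-rank-n-2-rel-determines-kernel} and \cref{lemma-higher-ranks}); then show that each of the four witnesses forces a relation whose associated pair is distinct from the other three. I would need to check that the two $I_n$-witnesses can indeed be chosen with genuinely different kernel types available at rank $n-2$ (which requires $n\geq 5$, explaining the hypothesis of this subsection), and confirm that the coset-change invariant is preserved by all higher-rank and $S_n$ relations so that a single rank $n-2$ relation is pinned to a single kernel type. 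Once this bookkeeping is in place, combining the four forced rank $n-2$ relations with the four rank $n-1$ relations from \cref{lemma-at-least-4-n-1-PTn} gives the desired bound $|S|\geq 8$.
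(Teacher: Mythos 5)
Your overall framework is the paper's: reduce, as in \cref{lemma-Tn-at-least-2-n-2}, to exhibiting rank $n-2$ elements admitting two factorisations that differ by an invariant preserved by the $S_n$-relations and by every rank $n-1$ relation, then separate the forced rank $n-2$ relations by kernel type. Your two full-transformation witnesses and your first symmetric-inverse witness are exactly three of the paper's four cases. The gap is your fourth witness. You propose ``a second partial bijection of rank $n-2$ but of a different kernel/domain type'' and correctly flag that this existence needs checking; the check fails, and not just for small $n$. Every element of $I_n$ of rank $n-2$ is injective with domain of size exactly $n-2$, so in the kernel-type formalism of the paper (where the kernel-class sizes sum to $|\dom f|$) every such element has kernel type $1^{n-2}$: there is no second type available at rank $n-2$ inside $I_n$ for any $n$. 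Nor can two $I_n$-witnesses be separated by their images or $S_n$-orbits: the forced relation occurs inside a word as $p\,u_i\,q$ with $p$ representing a permutation, and while the equalities $\rank(hg)=\rank(h)$ force $\ker(hg)=\ker(h)$ and $\dom(hg)=\dom(h)$ (so kernel type is pinned), the right factor $q$ is free to move the image. Hence within the ``coset change in $\Stab(1)$'' category only one relation can be forced, and your plan yields at most three relations.

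The paper's fourth witness is of a different character, and it is the missing idea: take
\[
  f_1 \;=\; \ve\, (1\ 3)\, \vep\, (1\ 3) \;=\; (1\ 3)\, \vep\, (1\ 3)\, \ve,
\]
an element of $PT_n$ lying in \emph{neither} $T_n$ nor $I_n$ (it has kernel class $\{1,2\}$ and is undefined at $3$), of kernel type $2^1 1^{n-3}$. Its two factorisations have \emph{different leading non-permutations} ($\ve$ versus $\vep$). By \cref{lemma-leading-epsilon-invariant-rank-n-1}, no $S_n$-relation and no rank $n-1$ relation can change the leading non-permutation of a word, so any elementary sequence between these two factorisations must use a rank $n-2$ relation whose two sides have distinct leading non-permutations; its kernel type $2^1 1^{n-3}$ differs from the types $1^{n-2}$, $3^1 1^{n-3}$, and $2^2 1^{n-4}$ of the other three witnesses, so this relation is distinct from the other three forced relations. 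Replacing your fourth witness with this one, the rest of your argument goes through as in the paper.
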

\begin{proof}
  The possible kernel types of rank $n - 2$ partial transformations in $PT_n$
  are $1^{n-2}$, $3^{1}1^{n-3}$, $2^21^{n -4}$, and $2^11^{n-3}$.
  Hence by \cref{lemma-new-2} it
  suffices for each of these kernel types to show that there exists $f\in PT_n$
  and $w_1, w_2\in (A\cup\{\ve,\vep\})^*$ satisfying one of the conditions in
  \cref{lemma-new-2}. We consider each of these kernel types in the cases
  below.

  \begin{enumerate}[label=\bf Case \arabic*., leftmargin=0pt, labelwidth=!,
      itemsep=1em, ref=Case \arabic*]

    \item
      Suppose that
      $w_1=\vep (1, 2) \vep (1, 2)$,
      $w_2 = (1, 2) \vep (1, 2) \vep$, and
      \[
        f=(w_1)\phi=(w_2)\phi=
        \begin{pmatrix}
          1 & 2 & 3 & 4 & 5 & \cdots & n\\
          - & - & 3 & 4 & 5 & \cdots & n \\
        \end{pmatrix}.
      \]
      The leading non-permutation of $w_1$ and $w_2$ is $\vep$, but the cosets
      of the leading permutations $\id$ and $(1, 2)$ in $\Stab(1)$ are
      distinct. Thus by \cref{lemma-new-2}(a) there is a rank $n - 2$ relation
      $u_1 = v_1$ in $S$, where $(u_1)\phi$ and $(v_1)\phi$ have the same
      kernel type $1^{n - 2}$ as $f$.

    \item Suppose that
      $w_1 = \ve (2, 3) \ve (2, 3)$,
      $w_2= (2, 3) \ve (2, 3) \ve$, and
      \[
        f = (w_1)\phi=(w_2)\phi=
        \begin{pmatrix}
          1 & 2 & 3 & 4 & 5 & \cdots & n\\
          1 & 1 & 1 & 4 & 5 & \cdots & n \\
        \end{pmatrix}.
      \]
      The words $w_1$ and $w_2$ have the same leading non-permutation $\ve$,
      but their leading permutations $\id$ and $(2, 3)$ belong to different
      cosets of $\Stab(\{1, 2\})$.
      Hence by \cref{lemma-new-2}(b) there is a rank $n - 2$ relation
      $u_2 = v_2$ in $S$, where $(u_2)\phi$ and $(v_2)\phi$ have the same
      kernel type $3^1 1^{n - 3}$ as $f$.

    \item Suppose that
      $w_1=(1, 3) (2, 4)\ve (1, 3)(2, 4) \ve$, $w_2=\ve (1,
      3) (2, 4)\ve (1, 3)(2, 4)$, and
      \[
        f = (w_1)\phi = (w_2)\phi=
        \begin{pmatrix}
          1 & 2 & 3 & 4 & 5 & \cdots & n\\
          1 & 1 & 3 & 3 & 5 & \cdots & n \\
        \end{pmatrix}.
      \]
      The words $w_1$ and $w_2$ have the same leading non-permutation $\ve$ but
      their leading permutations $(1, 3)(2, 4)$ and $\id$ belong to distinct
      cosets of $\Stab(\{1, 2\})$. Therefore by \cref{lemma-new-2}(b) there is
      a rank $n - 2$ relation $u_3 = v_3$ in $S$, where $(u_3)\phi$ and
      $(v_3)\phi$ have the same kernel type $2^2 1^{n - 4}$ as $f$.

    \item
      Suppose that $w_1 = \ve (1, 3) \vep (1, 3)$, $w_2 = (1, 3) \vep (1,
      3) \ve$, and
      \[
        f = (w_1)\phi = (w_2)\phi =
        \begin{pmatrix}
          1 & 2 & 3 & 4 & 5 & \cdots & n\\
          1 & 1 & - & 4 & 5 & \cdots & n \\
        \end{pmatrix}.
      \]
      The leading non-permutations of $w_1$ and $w_2$ are distinct. Therefore,
      by \cref{lemma-new-2}(c), there is a rank $n - 2$ relation $u_4 = v_4$ in
      $S$, where $(u_4)\phi$ and $(v_4)\phi$ have the same kernel type $2^11^{n
      - 3}$ as $f$.\qedhere
  \end{enumerate}
\end{proof}

\subsection{A presentation with $9$ non-$S_n$
relations}\label{subsection-PTn-upper-bound}

In this section we prove the following theorem.

\begin{theorem}\label{theorem-PTn-9-extra}
  Suppose that $n\in \N$, that $n \geq 4$, and that $\presn{A}{R}$ is
  any monoid presentation for the symmetric group $S_n$ of degree
  $n$. Then the following presentation $\PT'$ defines $PT_n$:
  the generators are $A\cup \{\ve, \vep\}$, and
  the relations are $R$ together with:
  \setlength{\columnsep}{2cm}
  \begin{multicols}{2}
    \begin{enumerate}[label=\rm (P\arabic*), ref=\rm P\arabic*]
        \addtocounter{enumi}{6}
      \item [\rm (\ref{rel-I3})]
        $(2, 3, \ldots, n)   \vep = \vep  (2,  3, \ldots, n)$;
      \item [\rm (\ref{rel-T3})] $(3, 4)  \ve = \ve  (3, 4)$;
      \item [\rm (\ref{rel-T7})]
        $(1, n)(2, 3) \ve  (1, n)(2, 3)  \ve = \ve (1, n)(2, 3)  \ve
        (1, n)(2, 3)$;
      \item [\rm (\ref{rel-T8})]
        $(2, 3)  \ve  (2, 3)  \ve(2, 3)  \ve(2, 3) = \ve  (2, 3)   \ve$;
      \item [\rm (\ref{rel-T9})]
        $(3,  4,\ldots, n) (1,  2)
        \ve = \ve  (3,  4,  \ldots, n)$;
      \item [\rm (\ref{rel-P1})] $\ve  (1,  2)   \vep  (1,  2)  = \ve$;
      \item [\rm (\ref{rel-P5})]
        $(1,  3)   \vep  (1,  3)   \ve = \ve  (1,  3)   \vep  (1,  3)$;
      \item[\rm (\ref{rel-P6})]
        $(1,  2)   \vep  (1,  2)   \vep  (1,  2) = \ve \vep$;
      \item\label{rel-P7}
        $(2,  3)   \vep  (2,  3)  = \vep  (1,  2)   \ve  (1,  2) $;
      \item[\vspace{\fill}]
    \end{enumerate}
  \end{multicols}
  \noindent using the notation of \cref{theorem-PTn-main}.
\end{theorem}

It is possible to show that the relations in the presentation in
\cref{theorem-PTn-9-extra} are irredundant, but this is not
particularly relevant here, and so we omit the details.

We begin by showing that it suffices to
prove that any relations defining $I_n$ and $T_n$ hold in $\PT'$.
\begin{lemma}\label{lemma-PT-suffices}
  Suppose that $\presn{A}{R}$ is any monoid presentation for $S_n$, and that
  $\P_I = \presn{A, \vep}{R, R_I}$ and $\P_T = \presn{A, \ve}{R, R_T}$ are
  presentations defining $I_n$ and $T_n$, respectively, where
  $\ve$ and $\vep$
  are as given in \cref{theorem-PTn-main}. If the relations in
  $\P_I$ and $\P_T$ hold in $\PT'$, then $\PT'$ defines the
  partial transformation monoid $PT_n$.
\end{lemma}

\begin{proof}
  To show that $\PT'$ defines $PT_n$ it suffices to show that the
  relations in
  $\PT'$ hold in $PT_n$; and the relations in \cref{theorem-PTn-east} hold in
  $\PT'$. It is routine to verify the former.

  The presentation $\PT'$ and the presentation given in
  \cref{theorem-PTn-east} share the
  relations~\ref{rel-I3},~\ref{rel-T3},~\ref{rel-T7},~\ref{rel-P1},
  and~\ref{rel-P5}.  Thus it suffices to prove that the remaining relations
  from the presentation given in \cref{theorem-PTn-east} hold in
  $\PT'$, i.e.\
  that~\ref{rel-I2},~\ref{rel-I4},~\ref{rel-T2},~\ref{rel-T4},~\ref{rel-P2},~\ref{rel-P3}
  and~\ref{rel-P4} hold in $\PT'$.

  The relations~\ref{rel-I2} and~\ref{rel-I4} hold in $I_n$ and the
  relations~\ref{rel-T2},~\ref{rel-T4}, and~\ref{rel-P4} hold in
  $T_n$. Hence these relations hold in $\P_I$ and $\P_T$,
  respectively, and so
  they also hold in $\PT'$ by the assumption of this lemma.

  That~\ref{rel-P2} ($\vep(1, 2)\ve(1, 2) = \vep$) holds
  in $\PT'$ follows from:
  \[
    \vep (1, 2) \ve (1, 2) \stackrel{\ref{rel-P7}}{=} (2, 3) \vep
    (2, 3) \stackrel{\ref{rel-I2}}{=} \vep.
  \]

  It is routine to verify that $(1, 2) \vep (1, 2) \vep (1, 2) = \vep (1,
  2)\vep$ holds in $I_n$ and so holds in $\P_I$. Thus~\ref{rel-P3}
  ($\ve \vep =
  \vep (1, 2) \vep)$ holds in $\PT'$ since
  \[
    \ve \vep \stackrel{\ref{rel-P6}}{=} (1, 2) \vep (1, 2) \vep (1,
    2) = \vep (1, 2) \vep.\qedhere
  \]
\end{proof}

\begin{corollary}\label{cor-PT-suffices-2}
  If the relations~\ref{rel-T1},~\ref{rel-I2},~\ref{rel-I6},
  and~\ref{rel-I7}
  hold in $\PT'$, then $\PT'$ defines the partial
  transformation monoid $PT_n$.
\end{corollary}
\begin{proof}
  By \cref{lemma-PT-suffices}, it suffices to show that the relations in the
  presentations $\mathcal{I}$ from \cref{theorem-In-main} and $\mathcal{T}$
  from \cref{thm-full-transf-5-extra-rels} hold in $\PT'$. More
  specifically, that the
  relations~\ref{rel-I2},~\ref{rel-I6},~\ref{rel-I7},~\ref{rel-T1},~\ref{rel-T3},~\ref{rel-T7},~\ref{rel-T8},
  and~\ref{rel-T9} hold
  in $\PT'$. The relations~\ref{rel-T3},~\ref{rel-T7},~\ref{rel-T8},
  and~\ref{rel-T9} belong to $\PT'$, and the
  relations~\ref{rel-T1},~\ref{rel-I2},~\ref{rel-I6},
  and~\ref{rel-I7} hold by
  assumption.
\end{proof}

By \cref{cor-PT-suffices-2}, to show that $\PT'$ defines $PT_n$ it
suffices to show that the relations~\ref{rel-T1},~\ref{rel-I2},~\ref{rel-I6},
and~\ref{rel-I7} hold in $\PT'$. We will make use of the following
lemma repeatedly when showing this.

\begin{lemma}\label{lemma-B1-holds-in-PT}
  The relation~\ref{rel-I1}, $\vep ^ 2 = \vep$ holds in $\PT'$.
\end{lemma}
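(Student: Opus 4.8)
The plan is to show that $\vep^2 = \vep$ is a consequence of the defining relations of $\mathcal{PT}$. The key observation is that the presentation $\mathcal{PT}$ contains the relation~\ref{rel-P1}, namely $\ve(1\ 2)\vep(1\ 2) = \ve$, and the relation~\ref{rel-P7}, namely $(1\ 2)\vep(1\ 2)\vep(1\ 2) = \ve\vep$. Since $\vep$ is a rank $n-1$ idempotent in $PT_n$, the relation $\vep^2 = \vep$ certainly holds in $PT_n$, so the content of the lemma is purely that it follows \emph{syntactically} from the chosen relations; this is exactly the sort of low-rank identity that the earlier lemmas (in particular \cref{lemma-higher-ranks} and \cref{lemma-assume-higher-ranks}) are designed to help establish.

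\textbf{Main strategy.} I would first try to manipulate~\ref{rel-P7} together with~\ref{rel-P1} to isolate a power of $\vep$. Starting from $\ve\vep = (1\ 2)\vep(1\ 2)\vep(1\ 2)$, the idea is to left-multiply or conjugate by the permutation $(1\ 2)$ and use~\ref{rel-P1} to collapse an $\ve$. Concretely, multiplying~\ref{rel-P7} on the left by a suitable word and substituting $\ve = \ve(1\ 2)\vep(1\ 2)$ from~\ref{rel-P1} should let me replace the leading $\ve$ by something involving $\vep$ alone, producing an expression in which $\vep^2$ appears on one side and a single $\vep$ on the other. The relation~\ref{rel-P6}, $(2\ 3)\vep(2\ 3) = \vep(1\ 2)\ve(1\ 2)$, may also be useful for re-expressing conjugates of $\vep$ by transpositions in terms of products of $\ve$ and $\vep$.

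\textbf{Key steps in order.} First I would record the consequence $\ve = \ve(1\ 2)\vep(1\ 2)$ of~\ref{rel-P1} and the consequence $\ve\vep = \vep(1\ 2)\vep$ that the proof of \cref{lemma-PT-suffices} already derives from~\ref{rel-P7} (namely the chain $\ve\vep \stackrel{\ref{rel-P7}}{=} (1\ 2)\vep(1\ 2)\vep(1\ 2) = \vep(1\ 2)\vep$, using that $(1\ 2)\vep(1\ 2)\vep(1\ 2) = \vep(1\ 2)$ holds in $I_n$). Second, I would substitute one into the other: starting from $\ve\vep = \vep(1\ 2)\vep$ and using~\ref{rel-P1} in the form $\ve = \ve(1\ 2)\vep(1\ 2)$, rewrite $\ve\vep$ as $\ve(1\ 2)\vep(1\ 2)\vep$ and then apply~\ref{rel-P6} or~\ref{rel-I2} to the interior word $(1\ 2)\vep(1\ 2)\vep$ to force the appearance of $\vep^2$. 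Third, comparing the two resulting expressions for the same element should yield a relation of the form $\vep^2 = \vep$ after cancelling leading permutations via the $S_n$-relations $R$. I would keep careful track of ranks throughout and invoke \cref{lemma-higher-ranks} to discard any intermediate words of rank lower than $n-1$, since all the words in play represent rank $n-1$ elements.

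\textbf{Anticipated obstacle.} The hard part will be that the relation~\ref{rel-P6} mixes $\ve$ and $\vep$, while~\ref{rel-I2} only constrains $\vep$ against $(2\ 3)$; getting the bookkeeping of the conjugating transpositions to close up so that an honest $\vep^2$ emerges, rather than a stubborn residual $\ve$ or an unwanted conjugate, is where the calculation could stall. In particular I expect the main difficulty to be showing that after all substitutions the two sides differ only by a word representing the identity of $S_n$, so that the $R$-relations finish the job. If the direct route through~\ref{rel-P6} does not close, the fallback is to use $\ve\vep = \vep(1\ 2)\vep$ twice in combination with the $I_n$-identity $(1\ 2)\vep(1\ 2)\vep(1\ 2) = \vep(1\ 2)$, which already implicitly carries an idempotency of $\vep$, and to peel that off explicitly.
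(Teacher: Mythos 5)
Your proposal has a genuine gap, and it is circularity rather than a computational slip. The central tool you invoke --- the derived relation $\ve\vep = \vep(1\ 2)\vep$, obtained in the proof of \cref{lemma-PT-suffices} via the chain $\ve\vep \stackrel{\ref{rel-P7}}{=} (1\ 2)\vep(1\ 2)\vep(1\ 2) = \vep(1\ 2)\vep$ --- is not available at the point where \cref{lemma-B1-holds-in-PT} must be proved. The second equality in that chain is justified in the paper only by the fact that it ``holds in $I_n$'', and that justification is legitimate there solely because \cref{lemma-PT-suffices} operates under the \emph{hypothesis} that the relations of a presentation $\P_1$ for $I_n$ are already known to hold in $\mathcal{PT}$. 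But \cref{lemma-B1-holds-in-PT} is proved precisely in order to establish that hypothesis: it is the engine used, via \cref{cor-PT-suffices-2}, to verify that \ref{rel-T1}, \ref{rel-I2}, \ref{rel-I6} and \ref{rel-I7} hold in $\mathcal{PT}$, which is what makes \cref{lemma-PT-suffices} applicable in the first place. Importing an $I_n$-identity that is not among the defining relations of $\mathcal{PT}$ --- one which, as you yourself observe, ``already implicitly carries an idempotency of $\vep$'' --- therefore assumes what is to be proved. The same objection applies to your suggested use of \ref{rel-I2}: it is not a defining relation of $\mathcal{PT}$, and in the paper it is derived only \emph{after}, and \emph{using}, $\vep^2=\vep$. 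Your fallback route relies on the same $I_n$-identity and is blocked for the same reason. (Two smaller problems: the identity as you state it, with right-hand side $\vep(1\ 2)$, is not even true in $I_n$ --- the left side has rank $n-2$, the right side rank $n-1$; the correct form is $(1\ 2)\vep(1\ 2)\vep(1\ 2) = \vep(1\ 2)\vep$. And \ref{rel-P6} cannot be applied to the subword $(1\ 2)\vep(1\ 2)\vep$ you propose to rewrite, since neither side of \ref{rel-P6} occurs in it.)

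For comparison, the paper's proof is short, self-contained, and does not use \ref{rel-P7} at all: only \ref{rel-P6}, \ref{rel-P1} and the $S_n$-relations $R$ are needed. One first shows
\[
(2\ 3)\vep(2\ 3)\vep \stackrel{\ref{rel-P6}}{=} \vep(1\ 2)\ve(1\ 2)\vep
\stackrel{\ref{rel-P1}}{=} \vep(1\ 2)\ve(1\ 2)
\stackrel{\ref{rel-P6}}{=} (2\ 3)\vep(2\ 3),
\]
where the middle step inserts $(1\ 2)(1\ 2)$ (an $R$-consequence of the empty word) so that \ref{rel-P1} absorbs the trailing $\vep$. Conjugating by $(2\ 3)$ gives $\vep = \vep(2\ 3)\vep(2\ 3)$, and then
\[
\vep^2 = \vep(2\ 3)\vep(2\ 3)\cdot\vep = \vep\cdot(2\ 3)\vep(2\ 3)\vep = \vep\cdot(2\ 3)\vep(2\ 3) = \vep,
\]
using the displayed equation in the middle step. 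If you want to salvage your plan, this is the discipline required: every step must be licensed by a relation literally present in $\mathcal{PT}$ (or by $R$), never by a fact about $I_n$ or $T_n$, since the whole point of the lemma is to bootstrap from the former to the latter.
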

\begin{proof}
  We begin by showing that
  \begin{equation}\label{eq-C1-1}
    (2, 3) \vep (2, 3)  \vep \stackrel{\ref{rel-P7}}{=} \vep (1,
    2) \ve (1, 2) \vep
    \stackrel{\ref{rel-P1}}{=} \vep (1, 2)  \ve (1, 2)
    \stackrel{\ref{rel-P7}}{=} (2, 3) \vep (2, 3).
  \end{equation}
  Rearranging the previous equation yields $\vep = \vep (2, 3) \vep (2,
  3)$, and so
  \begin{equation*}
    \vep^2 = \vep (2, 3) \vep (2, 3) \cdot \vep
    = \vep \cdot (2, 3) \vep (2, 3) \vep
    \stackrel{\eqref{eq-C1-1}}{=} \vep \cdot (2, 3) \vep (2, 3)
    = \vep. \qedhere
  \end{equation*}
\end{proof}

We are now in a position to verify that the
relations~\ref{rel-I2},~\ref{rel-T1},~\ref{rel-I6}, and~\ref{rel-I7} hold in
$\PT'$:
\begin{enumerate}
  \item [(\ref{rel-I2})]
    $\vep (2, 3) = (2, 3)\vep$ holds in $\PT'$ by rearranging
    \begin{align*}
      \vep \stackrel{\eqref{eq-C1-1}}{=} \vep \cdot (2, 3) \vep (2, 3)
      \stackrel{\ref{rel-P7}}{=} \vep \cdot \vep (1, 2) \ve (1, 2)
      \stackrel{\ref{rel-I1}}{=} \vep (1, 2) \ve (1, 2)
      \stackrel{\ref{rel-P7}}{=} (2, 3) \vep (2, 3).
    \end{align*}
  \item [(\ref{rel-T1})] We will show that~\ref{rel-T1} holds in
    $\PT'$
    by showing that the rank $n - 1$ relations in the
    presentation for $PT_n$
    in \cref{theorem-PTn-east} hold in $\PT'$. It will
    follow from this
    that every rank $n-1$ relation that holds in any presentation for $PT_n$
    holds in $\PT'$. In particular, any rank $n-1$ relation, such
    as~\ref{rel-T1}, in any presentation for $T_n$ holds in $\PT'$.

    The rank $n - 1$ relations in \cref{theorem-PTn-east}
    are~\ref{rel-T2},~\ref{rel-T3},~\ref{rel-T4},~\ref{rel-I2},~\ref{rel-I3},~\ref{rel-P1},
    and~\ref{rel-P2}. Of these~\ref{rel-T3},~\ref{rel-I3},
    and~\ref{rel-P1} are among the
    defining relations in $\PT'$ and we have already shown
    that~\ref{rel-I2} holds in $\PT'$. Hence it suffices to show
    that~\ref{rel-T2},~\ref{rel-T4}, and~\ref{rel-P2} hold in
    $\PT'$:
    \begin{enumerate}
      \item[(\ref{rel-T2})] We showed in the proof of
        \cref{thm-full-transf-5-extra-rels} in
        \cref{subsection-Tn-upper-bound}
        that the relations for the symmetric group,~\ref{rel-T3},
        and~\ref{rel-T9} imply that~\ref{rel-T2} holds.
        Since~\ref{rel-T3} and~\ref{rel-T9} belong to
        $\PT'$,~\ref{rel-T2} holds in $\PT'$.
      \item[(\ref{rel-T4})] We showed in
        \cref{subsection-Tn-upper-bound}
        that, together with the relations for the symmetric
        group,~\ref{rel-T2} and~\ref{rel-T9} imply that~\ref{rel-T4}.
        We have just shown that~\ref{rel-T2} holds in $\PT'$
        and~\ref{rel-T9} belongs to $\PT'$, and so~\ref{rel-T4} holds
        in $\PT'$.
      \item[(\ref{rel-P2})]
        $\vep = \vep (1, 2) \ve (1, 2)$ holds in $\PT'$ since
        \(
          \vep \stackrel{\ref{rel-I2}}{=} (2, 3) \vep (2, 3)
          \stackrel{\ref{rel-P7}}{=} \vep (1, 2) \ve (1, 2).
        \)
    \end{enumerate}

  \item [(\ref{rel-I6})] $(2, 3, \ldots, n)\vep = \vep^2(2, 3,
    \ldots, n)$ holds in
    $\PT'$ since
    \[
      (2, 3, \ldots, n) \vep \stackrel{\ref{rel-I3}}{=} \vep(2,
      3, \ldots, n)
      \stackrel{\ref{rel-I1}}{=} \vep ^ 2(2, 3, \ldots, n).
    \]
  \item [(\ref{rel-I7})] $(1, 2) \vep (1, 2) \vep (1, 2) \cdot
    \vep (1, 2) =\vep (1, 2) \vep$ holds in $\PT'$ since
    \begin{align*}
      (1, 2) \vep (1, 2) \vep (1, 2) \cdot \vep (1, 2)
      &= \ve \vep \cdot \vep (1, 2)
      && \text{by~\ref{rel-P6}}\\
      &= \ve \cdot \vep (1, 2)
      &&\text{by \cref{lemma-B1-holds-in-PT}}\\
      &= (1, 2) \ve \cdot \vep (1, 2)
      && \text{by~\ref{rel-T2}}\\
      &= (1, 2) \cdot \ve \vep \cdot (1, 2)\\
      &= (1, 2) \cdot (1, 2) \vep (1, 2) \vep (1, 2) \cdot (1, 2)
      && \text{by~\ref{rel-P6}}\\
      &= \vep (1, 2) \vep.
    \end{align*}
\end{enumerate}
Thus \cref{theorem-PTn-9-extra} holds by \cref{cor-PT-suffices-2}.

\subsection{The proof of Theorem~\ref{theorem-PTn-main}}

In this section we show that the presentation $\PT$ in \cref{theorem-PTn-main}
defines the partial transformation monoid $PT_n$ when $n \geq 7$.

We do this by showing that the 9 relations in the presentation from
\cref{theorem-PTn-9-extra} hold. These relations
are:~\ref{rel-I3},~\ref{rel-T3},~\ref{rel-T7},~\ref{rel-T8},~\ref{rel-T9},~\ref{rel-P1},~\ref{rel-P5},~\ref{rel-P6},
and~\ref{rel-P7}. The
relations~\ref{rel-T7},~\ref{rel-T8},~\ref{rel-P5}, and~\ref{rel-P6}
belong to $\PT$,
and so we must only verify that the
relations:~\ref{rel-I3},~\ref{rel-T3},~\ref{rel-T9},~\ref{rel-P1},
and~\ref{rel-P7} hold in $\PT$.

Similar to the proof of \cref{theorem-full-transf-main} in
\cref{subsection-proof-full-transf} the key step in the proof of
\cref{theorem-PTn-main} is the following result.

\begin{lemma}\label{lem:ZetaEvenCommute}
  If $\sigma \in A ^*$ represents an (even) permutation in
  $A_{\{3,\dots,n\}}$, then $\sigma^{-1}\zeta\sigma = \zeta$ holds in $\PT$.
\end{lemma}
\begin{proof}
  If $n$ is even, then
  $\alpha\beta^{-1} = (3, 4, n, n - 1, \ldots, 6, 5) =
  \gamma\delta^{-1}$; if $n$ is odd, then
  $\alpha\beta^{-1} = (3, 6, 7, \ldots, n - 1, n)(4, 5) =
  \gamma\delta^{-1}$. So, in either case,
  $\alpha\beta^{-1} = \gamma\delta^{-1}$.
  Throughout the proof we will use the fact that $(1, 2)$ commutes with
  $\alpha$ and $\beta$ without reference. We also
  set $\tau =
  \beta\alpha\beta^{-1}\alpha\beta^{-1}\alpha^{-1}\beta\alpha^{-1}$, as in
  \cref{conj:GenerateAlternatingZeta-odd,conj:GenerateAlternatingZeta-even}.

  From~\ref{rel-P-alpha} and~\ref{rel-P-beta}, it follows that
  \begin{equation}\label{eq-alpha-beta}
    \zeta = (1,2)\beta\alpha^{-1}\zeta\alpha\beta^{-1}
  \end{equation}
  and from~\ref{rel-P-gamma} and~\ref{rel-P-delta} and because
  $\alpha\beta^{-1} =
  \gamma\delta ^{-1}$:
  \begin{equation}\label{eq-gamma-delta}
    \eta = \beta\alpha^{-1}\eta\alpha\beta^{-1}.
  \end{equation}
  Hence
  \begin{align*}
    \zeta
    &=\beta\zeta(1, 2)\eta(1, 2)\beta^{-1}
    &&\text{by~\ref{rel-P-beta}}
    \\
    &=(1, 2)\beta(\beta\alpha^{-1}\zeta \alpha\beta^{-1})(1,
    2)(\beta\alpha^{-1}\eta \alpha\beta^{-1})(1, 2)\beta^{-1}
    &&\text{by~\eqref{eq-alpha-beta} and~\eqref{eq-gamma-delta}}
    \\
    &= (1, 2)\beta^2\alpha^{-1}\zeta(1, 2)\eta(1, 2) \alpha\beta^{-2}.
  \end{align*}
  Thus conjugating $\zeta = (1, 2)\beta^2\alpha^{-1}\zeta(1, 2)\eta(1, 2)
  \alpha\beta^{-2}$ by $\beta^2\alpha^{-1}$ yields
  \begin{align*}
    \alpha\beta^{-2}\zeta\beta^2\alpha^{-1}
    &= (1, 2)\zeta(1, 2)\eta(1, 2)
    \\
    &=(\beta\alpha^{-1}\zeta\alpha\beta^{-1})(1,
    2)(\beta\alpha^{-1}\eta\alpha\beta^{-1})(1, 2)
    &&\text{by~\eqref{eq-alpha-beta} and~\eqref{eq-gamma-delta}}
    \\
    &=\beta\alpha^{-1}\zeta(1, 2)\eta(1, 2)\alpha\beta^{-1}
    \\
    &= \beta\alpha^{-1}(\beta^{-1}\zeta\beta)\alpha\beta^{-1}
    && \text{by~\ref{rel-P-beta}}.
  \end{align*}
  Moreover,
  \begin{align*}
    \alpha\beta^{-2}\zeta\beta^2\alpha^{-1}
    & =   \alpha \beta^{-1}(\zeta (1, 2) \eta (1, 2))\beta\alpha^{-1}
    && \text{by~\ref{rel-P-beta}}
    \\
    &=(1,2)\alpha\beta^{-1}\alpha^{-1}\zeta\alpha\beta\alpha^{-1}
    && \text{by~\ref{rel-P-alpha}}
    \\
    &= \alpha\beta^{-1}\alpha^{-1}(\beta\alpha^{-1}\zeta\alpha\beta^{-1})
    \alpha\beta\alpha^{-1}
    && \text{by~\eqref{eq-alpha-beta}}.
  \end{align*}
  So, by equating both expressions for
  $\alpha\beta^{-2}\zeta\beta^2\alpha^{-1}$,
  we have that
  \begin{equation}\label{eq-dual-tau-zeta}
    \zeta =
    \beta\alpha\beta^{-1}\alpha\beta^{-1}\alpha^{-1}\beta\alpha^{-1}\zeta\alpha\beta^{-1}\alpha\beta\alpha^{-1}\beta\alpha^{-1}\beta^{-1}
    = \tau\zeta\tau^{-1}.
  \end{equation}
  Furthermore, we can see that
  \begin{align*}
    \zeta
    &= (1, 2)\beta\alpha^{-1}\zeta\alpha\beta^{-1}
    && \text{by~\eqref{eq-alpha-beta}}
    \\
    &= (1, 2)\beta\alpha^{-1}\tau\zeta\tau^{-1}\alpha\beta^{-1}
    && \text{by~\eqref{eq-dual-tau-zeta}}
    \\
    &= (1, 2)\beta\alpha^{-1}\tau\alpha\cdot \alpha^{-1}\zeta\alpha
    \cdot\alpha^{-1}\tau^{-1}\alpha\beta^{-1}
    &&\text{since } \alpha\alpha^{-1} = \id
    \\
    &= (1, 2)\beta\alpha^{-1}\tau\alpha\cdot (1, 2)\zeta (1, 2)\eta
    (1, 2)\cdot \alpha^{-1}\tau^{-1}\alpha\beta^{-1}
    &&\text{by~\ref{rel-P-alpha}}
    \\
    &= \beta\alpha^{-1}\tau\alpha\beta^{-1}\zeta\beta
    \alpha^{-1}\tau^{-1}\alpha\beta^{-1}
    &&\text{by~\ref{rel-P-beta}}.
  \end{align*}
  By applying~\eqref{eq-alpha-beta} twice, we obtain that
  \[\zeta =
  \beta\alpha^{-1}\beta\alpha^{-1}\zeta\alpha\beta^{-1}\alpha\beta^{-1}. \]

  Therefore, by the above three equalities for $\zeta$ and
  \cref{conj:GenerateAlternatingZeta-odd,conj:GenerateAlternatingZeta-even}
  it follows that $\zeta
  = \sigma^{-1}\zeta\sigma$ for all $\sigma \in A_{\{3,\dots,n\}}$.
\end{proof}

\begin{lemma}\label{lem:ZetaOddCommute}
  If $\sigma \in A ^*$ represents an odd permutation in
  $S_{\{3,\dots,n\}}$, then $\sigma^{-1}\zeta\sigma = (1,2)\zeta$ holds in
  $\PT$.
\end{lemma}
\begin{proof}
  By \cref{lem:ZetaEvenCommute}, since $\alpha$ is even, $\zeta =
  \alpha^{-1}\zeta\alpha$. Hence~\ref{rel-P-beta}
  and~\ref{rel-P-alpha} imply that
  $\beta^{-1}\zeta\beta = (1,2)\zeta$.
  Since $\beta$ is odd,
  $\sigma = \nu\beta$ for some $\nu \in A_{\{3,\dots,n\}}$. Hence,
  \[ \sigma^{-1}\zeta\sigma = \beta^{-1}\nu^{-1}\zeta\nu\beta =
  \beta^{-1}\zeta\beta = (1,2)\zeta. \qedhere\]
\end{proof}

\begin{lemma}\label{lem:EtaEvenCommute}
  If $\sigma \in A ^*$ represents an (even)
  permutation in $A_{\{2,\dots,n\}}$, then $\sigma^{-1}\eta\sigma =
  \eta$ holds in $\PT$.
\end{lemma}
\begin{proof}
  By~\ref{rel-P-alpha} and~\ref{rel-P-beta}, and because
  $\alpha\beta ^{-1} = \gamma\delta ^{-1}$ (as noted at the start
    of the proof
  of \cref{lem:ZetaEvenCommute}):
  \begin{equation}\label{eq-alpha-beta-2}
    \zeta = (1,2)\delta\gamma^{-1}\zeta\gamma\delta^{-1}.
  \end{equation}
  Also from~\ref{rel-P-gamma} and~\ref{rel-P-delta} we get:
  \begin{equation}\label{eq-gamma-delta-2}
    \eta = \delta\gamma^{-1}\eta\gamma\delta^{-1}
  \end{equation}
  Now, we obtain
  \begin{align*}
    \delta^{-1}\eta\delta &
    =\eta(1, 2)\zeta(1, 2) &&
    \text{by~\ref{rel-P-delta}}\\
    &=(\delta\gamma^{-1}\eta
    \gamma\delta^{-1}) (\delta\gamma^{-1}\zeta \gamma\delta^{-1})(1, 2)
    && \text{by~\eqref{eq-alpha-beta-2} and~\eqref{eq-gamma-delta-2}}\\
    &= \delta\gamma^{-1}(\delta\gamma^{-1}\eta
    \gamma\delta^{-1})(1, 2)(\delta\gamma^{-1}\zeta
    \gamma\delta^{-1})(1, 2)\gamma\delta^{-1}
    && \text{by~\eqref{eq-alpha-beta-2} and~\eqref{eq-gamma-delta-2}}\\
    &= \delta\gamma^{-1}\delta\gamma^{-1}\eta (1, 2)\zeta(1, 2)
    \gamma\delta^{-1} \gamma\delta^{-1}
    &&(1, 2)\text{ and }\delta\gamma^{-1}\text{ commute}
    \\
    &=
    \delta\gamma^{-1}\delta\gamma^{-1}\delta^{-1}\eta\delta\gamma\delta^{-1}
    \gamma\delta^{-1} &&\text{by }\ref{rel-P-delta}
  \end{align*}
  Moreover, by~\ref{rel-P-delta},~\ref{rel-P-gamma},
  and~\eqref{eq-gamma-delta-2}:
  \[
    \delta^{-1}\eta\delta =\gamma^{-1}\eta\gamma =
    \gamma^{-1}(\delta\gamma^{-1}\eta\gamma\delta^{-1})\gamma.
  \]
  So, by equating both expressions for $\delta ^{-1}\eta\delta$ we have that
  \begin{equation}\label{eq-tau-eta}
    \eta =
    \delta\gamma\delta^{-1}\gamma\delta^{-1}\gamma^{-1}\delta\gamma^{-1}\eta\gamma\delta^{-1}\gamma\delta\gamma^{-1}\delta\gamma^{-1}\delta^{-1}
    = \rho\eta\rho^{-1}
  \end{equation}
  where $\rho =
  \delta\gamma\delta^{-1}\gamma\delta^{-1}\gamma^{-1}\delta\gamma^{-1}$
  as in
  \cref{conj:GenerateAlternatingEta-odd,conj:GenerateAlternatingEta-even}.
  Furthermore, we can see that
  \begin{align*}
    \eta &= \delta\gamma^{-1}\eta\gamma\delta^{-1}
    &&\text{by~\eqref{eq-gamma-delta-2}} \\
    &= \delta\gamma^{-1}\rho\eta\rho^{-1}\gamma\delta^{-1}
    &&\text{by~\eqref{eq-tau-eta}} \\
    &=
    \delta\gamma^{-1}\rho\gamma(\gamma^{-1}\eta\gamma)\gamma^{-1}\rho^{-1}\gamma\delta^{-1}
    && \gamma\gamma^{-1} = \id \\
    &=\delta\gamma^{-1}\rho\gamma\delta^{-1}\eta\delta
    \gamma^{-1}\rho^{-1}\gamma\delta^{-1}
    &&\text{by~\ref{rel-P-gamma} and~\ref{rel-P-delta}}.
  \end{align*}
  Moreover, by applying~\eqref{eq-gamma-delta-2} twice, we obtain that
  \[\eta =
    \delta\gamma^{-1}\delta\gamma^{-1}\eta\gamma\delta^{-1}\gamma\delta^{-1}.
  \]
  Therefore, by the above three equalities for $\eta$ and
  \cref{conj:GenerateAlternatingEta-odd,conj:GenerateAlternatingEta-even}
  it follows that $\eta = \sigma^{-1}\eta\sigma$ for $\sigma \in
  A_{\{2,\dots,n\}}$.
\end{proof}

In the next lemma, we show that $\eta$ is fixed by conjugation by any odd
permutation.

\begin{lemma}\label{lem:EtaOddCommute}
  If $\sigma \in A^*$ represents an odd permutation in $S_{\{2,\dots,n\}}$,
  then $\sigma^{-1}\eta\sigma = \eta$ holds in $\PT$.
\end{lemma}
\begin{proof}
  By \cref{lem:EtaEvenCommute}, $\eta = \delta^{-1}\eta\delta$
  since $\delta$
  is even. It follows that $\gamma^{-1}\eta\gamma = \eta$
  by~\ref{rel-P-gamma} and~\ref{rel-P-delta}. If $\sigma\in A^*$ represents
  any odd permutation, then there exists $\nu \in
  A_{\{2,\dots,n\}}$ such that
  $\sigma = \nu\gamma$. Hence
  \[
    \sigma^{-1}\eta\sigma = \gamma^{-1}\nu^{-1}\eta\nu\gamma =
    \gamma^{-1}\eta\gamma = \eta. \qedhere
  \]
\end{proof}

\begin{lemma}\label{lem:etaIdempotent}
  The relation $\eta^2 = \eta$ holds in $\PT$.
\end{lemma}
\begin{proof}
  By applying \cref{lem:EtaEvenCommute} to (\ref{rel-P-delta}), we
  obtain that $\eta = \eta(1,2)\zeta(1,2)$ because $\delta$ is even.
  Similarly, by applying \cref{lem:ZetaEvenCommute}
  to~\ref{rel-P-alpha}, we obtain that $\zeta =
  (1,2)\zeta(1,2)\eta(1,2)$ since
  $\alpha$ is even. Conjugating $\eta = \eta(1, 2)\zeta(1, 2)$
  by any odd $\sigma \in S_{\{3,\dots,n\}}$, yields
  \begin{align*}
    \sigma^{-1}\eta\sigma & =
    \sigma^{-1}\eta(1, 2)\zeta(1, 2)\sigma \\
    &= \sigma^{-1}\eta(1, 2)\sigma\sigma^{-1}\zeta(1, 2)\sigma \\
    &=\sigma^{-1}\eta\sigma
    (1, 2)\sigma^{-1}\zeta\sigma(1, 2) && \sigma \text{ and }(1, 2)
    \text{ commute}\\
    &= \eta(1, 2)(1, 2)\zeta(1, 2) &&\text{by
    \cref{lem:ZetaOddCommute,lem:EtaOddCommute}}\\
    &= \eta(1, 2)\zeta(1, 2)\eta(1, 2)(1, 2) &&\zeta =
    (1, 2)\zeta(1, 2)\eta(1, 2)\\
    &= \eta\eta && \eta(1, 2)\zeta(1, 2) = \eta.
  \end{align*}
  Hence, $\eta^2 = \eta$ holds in $\PT$.
\end{proof}

We can now show that the
relations~\ref{rel-P1},~\ref{rel-I3},~\ref{rel-T3},~\ref{rel-T9},
and~\ref{rel-P7} hold in $\PT$, which
completes the proof of \cref{theorem-PTn-main}:

\begin{enumerate}
  \item [\eqref{rel-P1}]
    By \cref{lem:ZetaOddCommute} applied to~\ref{rel-P-beta},
    since $\beta$ is odd,
    $(1, 2)\zeta = \zeta(1, 2)\eta(1, 2)$. Then,
    \begin{align*}
      \zeta &= (1, 2)\zeta\cdot (1, 2)\eta(1, 2) \\
      &= \zeta(1, 2)\eta(1, 2)\cdot (1, 2)\eta(1, 2) && (1, 2)\zeta =
      \zeta(1, 2)\eta(1, 2)\\
      &= \zeta(1, 2)\eta^2(1, 2) \\
      &= \zeta(1, 2)\eta(1, 2) &&\text{by \cref{lem:etaIdempotent}}.
    \end{align*}
  \item [\eqref{rel-I3}] That $(2, \ldots, n)\eta = \eta (2, \ldots, n)$
    follows immediately from \cref{lem:EtaEvenCommute} when $n$ is even and
    \cref{lem:EtaOddCommute} when $n$ is odd.

  \item [\eqref{rel-T3}] The relation $(3, 4)\zeta= \zeta(3, 4)$
    holds because
    \begin{align*}
      (3, 4)\zeta (3, 4) & = (1, 2)\zeta && \text{by
        \cref{lem:ZetaOddCommute}
      since $(3, 4)$ is odd} \\
      & = \beta^{-1}\zeta \beta && \text{by
      \cref{lem:ZetaOddCommute} since $\beta$ is odd}\\
      & = \zeta(1,2)\eta(1,2) && \text{by~\ref{rel-P-beta}} \\
      & = \zeta && \text{by~\ref{rel-P1}}.
    \end{align*}

  \item [\eqref{rel-T9}] We must show that $(3, 4, \ldots, n)(1, 2)\zeta=
    \zeta(3, 4, \ldots, n)$ holds in $\PT$. If $n$ is even, then $(3, 4,
    \ldots, n)$ conjugates $\zeta$ to $(1, 2)\zeta$ by
    \cref{lem:ZetaOddCommute}. Hence in this case the relation holds.

    If $n$ is odd, then $(3, 4, \ldots, n)$ conjugates $\zeta$ to $\zeta$ by
    \cref{lem:ZetaEvenCommute}. But $\zeta = (1, 2)\zeta$ as shown
    in the proof that~\ref{rel-T3} holds in $\PT$ above.

  \item [\eqref{rel-P7}] That $(2, 3)\eta(2, 3) = \eta
    (1, 2)\zeta(1, 2)$ holds
    follows from:
    \begin{align*}
      (2, 3)\eta (2, 3) & = \eta && \text{by \cref{lem:EtaOddCommute}
      since $(2, 3)$ is odd} \\
      & = \delta^{-1}\eta \delta && \text{by
      \cref{lem:EtaEvenCommute} since $\delta$ is even}\\
      & = \eta(1,2)\zeta(1,2) && \text{by~\ref{rel-P-delta}}.
    \end{align*}
\end{enumerate}

\subsection{The $n = 1$ to $6$ cases}\label{section-PTn-leq-4}

As in \cref{section-In-leq-3,section-Tn-leq-4}, we consider
presentations for
$PT_n$ where $n = 1, \ldots, 6$. Note that $PT_1 \cong I_1$, and so the
presentation for $I_1$ given in \cref{section-In-leq-3} also defines $PT_1$.
\GAP and Python code for performing the computations
mentioned in this section can be found in~\cite{libsemigroupsShortCaseStudy}.

For $n = 2$, it can be verified computationally that the presentation
\[
  \presn{a_2, \ve, \vep}{a_2^2 = 1,\quad  a_2\ve = \ve,\quad  \ve a_2 \vep a_2
  = \ve,\quad  \vep a_2 \ve a_2 = \vep,\quad a_2 \vep a_2 \vep a_2 = \ve \vep}
\]
defines $PT_2$. Using arguments similar to those in
\cref{section-In-leq-3,subsection-PTn-lower-bound}, it can be shown
that any presentation
requires at least three relations of rank $n - 1$ and one
relation of rank $n -
2$. It follows that the minimum number of non-$S_n$
relations in any
presentation for $PT_2$ is $4$.

For $n = 3$, it can be verified that a presentation for $S_3$, together with
the seven non-$S_n$
relations~\ref{rel-I2},~\ref{rel-T2},~\ref{rel-T8},~\ref{rel-P1},~\ref{rel-P2},~\ref{rel-P5},
and~\ref{rel-P6},
defines $PT_3$. Using
arguments similar to those in \cref{subsection-PTn-lower-bound}
and for $I_3$ in
\cref{section-In-leq-3}, it can be shown that any presentation for $PT_3$
requires at least seven non-$S_n$ relations. The minimum number of
non-$S_n$ relations in any presentation for $PT_3$ is
therefore seven.

For $n = 4$, $5$, and $6$ it can be verified that the following presentation
with eight non-$S_n$ relations defines $PT_n$ where $\presn{A}{R}$ is
Carmichael's~\cite{Carmichael1937aa} monoid presentation
in~\eqref{eq-carmichael}
for the symmetric group. It therefore follows that any monoid presentation
$\presn{A}{R}$ can be used in place of those from~\eqref{eq-carmichael}. By
\cref{thm-atleast8-for-pt}, it follows that this is the minimum
possible number of such relations.

Suppose that $n = 4$, $5$, or $6$ and that $\presn{A}{R}$ is any monoid
presentation for the symmetric group $S_n$ of degree $n$. Then the
presentation
with generators $A\cup \{\ve, \vep\}$, and relations $R$ together with the
irredundant relations:
\setlength{\columnsep}{2cm}
\begin{multicols}{2}
  \begin{enumerate}[label=\rm (Y\arabic*), ref=\rm Y\arabic*]
    \item [\rm (\ref{rel-I3})]
      $\vep  (2,  3, \ldots, n)  = (2,  3, \ldots, n)   \vep$;
    \item [\rm (\ref{rel-T7})]
      $\ve (1, n)(2, 3)  \ve  (1, n)(2, 3)  = (1, n)(2, 3) \ve
      (1, n)(2, 3)  \ve$;
    \item [\rm (\ref{rel-T8})]
      $(2, 3)  \ve  (2, 3)  \ve(2, 3)  \ve(2, 3) = \ve  (2, 3)   \ve$;
    \item [\rm (\ref{rel-T9})]
      $(3,  4, \ldots,  n) (1,  2)
      \ve = \ve  (3,  4,  \ldots, n)$;
    \item [\rm (\ref{rel-P5})]
      $\ve  (1,  3)   \vep  (1,  3)  = (1,  3)   \vep  (1,  3)   \ve$;
    \item[\rm (\ref{rel-P6})]
      $\ve \vep = (1,  2)   \vep  (1,  2)   \vep  (1,  2) $;
    \item \label{rel-Y1}
      $(2,  3)   \vep  (2,  3)  = \vep  (1,  2)   \ve  (1,  2) $;
    \item \label{rel-Y2}
      $(3, 4) \ve (3, 4) = \ve  (1,  2)  \vep  (1,  2)$.
  \end{enumerate}
\end{multicols}

\section*{Glossary of relations}
For the readers convenience, we list all the named relations below.
\setlength{\columnsep}{2cm}
\begin{multicols}{2}
  \begin{enumerate}
    \item[(\ref{rel-I1})]  $\vep^2 = \vep$
    \item[(\ref{rel-I2})] $(2, 3) \vep = \vep (2, 3)$
    \item[(\ref{rel-I3})]  $(2, 3, \ldots, n) \vep = \vep(2, 3, \ldots, n)$
    \item[(\ref{rel-I4})]  $\vep  (1, 2)   \vep  (1, 2)  =  \vep(1, 2)\vep$
    \item[(\ref{rel-I5})]  $(1, 2)   \vep  (1, 2)   \vep =  \vep(1, 2)\vep$
    \item[(\ref{rel-I6})]   $(2, 3, \ldots, n)\vep = \vep^2 (2, 3, \ldots, n)$
    \item[(\ref{rel-I7})]  $(1, 2) \vep(1, 2)\vep(1, 2)\vep (1, 2) =
      \vep (1, 2) \vep$
    \item[(\ref{rel-T1})]   $\ve  (1, 3) \ve  (1, 3) = \ve$
    \item[(\ref{rel-T2})]   $(1, 2)  \ve = \ve$
    \item[(\ref{rel-T3})]   $(3, 4) \ve = \ve  (3, 4)$
    \item[(\ref{rel-T4})]   $(3, 4, \ldots, n)  \ve = \ve  (3, 4, \ldots, n)$
    \item[(\ref{rel-T6})]   $\ve (2, 3) \ve (2, 3) = \ve (2, 3) \ve$
    \item[(\ref{rel-T5})]   $(2, 3) \ve (2, 3) \ve = \ve (2, 3) \ve$
    \item[(\ref{rel-T7})]   $(1, n)(2, 3) \ve  (1, n)(2, 3) \ve
      = \ve (1, n)(2, 3)  \ve  (1, n)(2, 3)$
    \item[(\ref{rel-T8})]  $(2, 3)  \ve  (2, 3)  \ve(2, 3)  \ve(2, 3)
      = \ve  (2, 3)   \ve$
    \item[(\ref{rel-T9})]  $(3, 4, \ldots, n) (1, 2) \ve = \ve  (3,
      4, \ldots, n)$
    \item[(\ref{rel-T-alpha})]  $\alpha ^{-1} \ve  \alpha = \ve  (1,
      3)  \ve  (1, 3)$
    \item[(\ref{rel-T-beta})]  $\beta^{-1} \ve \beta =(1, 2)  \ve (1,
      3)   \ve (1, 3)$
    \item[(\ref{rel-P1})]  $\ve (1, 2) \vep (1, 2)  = \ve$
    \item[(\ref{rel-P2})]  $\vep (1, 2)  \ve (1, 2)  = \vep$
    \item[(\ref{rel-P3})]  $\ve \vep = \vep (1, 2)  \vep$
    \item[(\ref{rel-P4})]  $(1, 3)  \ve (1, 2, 3)  \ve = \ve (1, 2, 3)  \ve$
    \item[(\ref{rel-P5})]  $(1, 3)  \vep (1, 3)  \ve = \ve (1, 3)  \vep (1, 3)$
    \item[(\ref{rel-P6})]  $(1,  2)   \vep  (1,  2)   \vep  (1,  2) = \ve \vep$
    \item[(\ref{rel-P7})]  $(2,  3)   \vep  (2,  3)  = \vep  (1,  2)
      \ve  (1,  2) $
    \item[(\ref{rel-P-alpha})]  $\alpha ^{-1}\ve \alpha = (1, 2)\ve
      (1, 2)\vep (1, 2)$
    \item[(\ref{rel-P-beta})]  $\beta ^{-1}\ve \beta = \ve (1, 2)\vep (1, 2)$
    \item[(\ref{rel-P-gamma})]  $\gamma^{-1}\vep \gamma = \vep (1, 2)\ve (1, 2)$
    \item[(\ref{rel-P-delta})]  $\delta^{-1}\vep \delta = \vep (1, 2)\ve (1, 2)$
    \item[(\ref{rel-Y1})]  $(2,  3)   \vep  (2,  3)  = \vep  (1,  2)
      \ve  (1,  2) $
    \item[(\ref{rel-Y2})]  $(3, 4) \ve (3, 4) = \ve  (1,  2)  \vep  (1,  2)$
  \end{enumerate}
\end{multicols}

\section*{Acknowledgments}
The authors thank the anonymous referee for their suggested improvements.
The third author acknowledges St Leonard's College, University of St
Andrews for his PhD funding.

\printbibliography{}

\end{document}